\documentclass[a4paper,reqno, 10pt]{amsart}

\usepackage{amsmath,amssymb,amsfonts,amsthm,mathrsfs}

\usepackage{verbatim,wasysym,cite}

\usepackage[latin1]{inputenc}
\usepackage{microtype}
\usepackage{color,enumitem,graphicx}
\usepackage[colorlinks=true,urlcolor=blue, citecolor=red,linkcolor=blue,
linktocpage,pdfpagelabels, bookmarksnumbered, bookmarksopen]{hyperref}
\usepackage[english]{babel}
\usepackage[symbol]{footmisc}
\renewcommand{\epsilon}{{\varepsilon}}
\usepackage{enumitem}

\numberwithin{equation}{section}
\newtheorem{theorem}{Theorem}[section]
\newtheorem{lemma}[theorem]{Lemma}
\newtheorem{remark}[theorem]{Remark}

\newtheorem{proposition}[theorem]{Proposition}
\newtheorem{corollary}[theorem]{Corollary}

\newcommand{\C}{\mathbb C}

\newcommand{\R}{\mathbb R}
\newcommand{\N}{\mathbb N}

\newcommand{\Z}{\mathbb Z}

\def\({\left(}
\def\){\right)}
\def\<{\left\langle}
\def\>{\right\rangle}


\def\F{\mathcal F}
\def\K{\mathcal K}

\def\L{\mathcal L}
\def\EE{\mathcal E}

\DeclareMathOperator{\RE}{Re}
\DeclareMathOperator{\IM}{Im}

\newcommand{\qtq}[1]{\quad\text{#1}\quad}

\begin{document}

\title[The cubic-quintic NLS with inverse-square potential]{The cubic-quintic nonlinear Schr\"odinger equation with inverse-square potential}

\author[Alex H. Ardila]{Alex H. Ardila}
\address{Universidade Federal de Minas Gerais\\ ICEx-UFMG\\ CEP
  30123-970\\ MG, Brazil} 
\email{ardila@impa.br}

\author[Jason Murphy]{Jason Murphy}
\address{Missouri University of Science and Technology \\ Rolla, MO, USA}
\email{jason.murphy@mst.edu} 

\begin{abstract} We consider the nonlinear Schr\"odinger equation in three space dimensions with a focusing cubic nonlinearity and defocusing quintic nonlinearity and in the presence of an external inverse-square potential.  We establish scattering in the region of the mass-energy plane where the virial functional is guaranteed to be positive.  Our result parallels the scattering result of \cite{KillipOhPoVi2017} in the setting of the standard cubic-quintic NLS.
\end{abstract}

\subjclass[2010]{35Q55}
\keywords{Cubic-quintic NLS; inverse-square potential; soliton; scattering.}

\maketitle

\section{Introduction}
\label{sec:intro}
We consider the long-time behavior of solutions to the cubic-quintic nonlinear Schr\"odinger equation with an inverse-square potential:
\begin{equation}\tag{NLS$_{a}$}\label{NLS}
\begin{cases} 
(i\partial_{t}-\L_{a})u=-|u|^{2}u+|u|^{4}u,\\
u|_{t=0}=u_{0}\in H^{1}(\R^{3}).
\end{cases} 
\end{equation}
Here $u: \R\times\R^{3}\rightarrow \C$, and the operator
\[
\L_{a}=-\Delta+a|x|^{-2}
\]
is defined via the Friedrichs extension with domain $C^{\infty}_{c}(\R^{3}\backslash\left\{0\right\})$. We restrict to $a>-\tfrac14$, which (by the sharp Hardy inequality) guarantees positivity of $\L_a$ and the equivalence
\begin{equation}\label{equiNorms}
\|u\|_{\dot H^1} \sim \|u\|_{\dot H_a^1}:=\|\sqrt{\L_a}u\|_{L^2}.
\end{equation}


The equation \eqref{NLS} has two conserved quantities, namely, the mass and energy:
\[
M(u)=\int_{\R^{3}}|u|^{2}\,dx\qtq{and} E_{a}(u)=\int_{\R^{3}}\tfrac{1}{2}|\nabla u|^{2}+\tfrac{a}{2|x|^{2}}|u|^{2}
-\tfrac{1}{4}|u|^{4}+\tfrac{1}{6}|u|^{6}\,dx.
\] 
By \eqref{equiNorms} and Sobolev embedding, we see that  $M(u_{0})$, $|E_{a}(u_{0})|<\infty$ if $u_{0}\in H^1$.

Our interest in this work is in \emph{scattering} for solutions $u$ to \eqref{NLS}, which means that \begin{equation}\label{Defscat}
\lim_{t\rightarrow\pm\infty}\|u(t)-e^{-it\L_{a}}u_{\pm} \|_{H^{1}}=0\qtq{for some}u_{\pm}\in H^1.
\end{equation}
 A thorough investigation of the scattering problem for the $3d$ cubic-quintic NLS without external potential, i.e.
\begin{equation}\label{NLSfree}
(i\partial_t + \Delta)u = -|u|^2 u + |u|^4 u,
\end{equation} 
was previously carried out in \cite{KillipOhPoVi2017}.  In particular, scattering was established in the region of the mass-energy plane in which the virial functional (cf. \eqref{E:VFnl} below) is guaranteed to be positive. This region was further extended in \cite{KillipMurphyVisan2020}, still relying on the virial identity in a fundamental way.  Our goal in this work is to initiate the study of the effect of an external potential on the dynamics of solutions for the cubic-quintic model.  Our main result is analogous to that of \cite{KillipOhPoVi2017}, establishing scattering in the region in the mass-energy plane where the virial functional is positive. 

While \eqref{NLS} is globally well-posed in $H^1$ (see Theorem~\ref{Th1}), we do not expect scattering to hold for arbitrary $H^1$ data.  Indeed, in the case of an attractive potential ($a<0$), we can construct a family of solitary wave solutions as optimizers for certain Gagliardo--Nirenberg--H\"older inequalities (see \eqref{InvGN} below).  Our main result instead proves scattering in the region of the mass-energy plane in which the \emph{virial functional} 
\begin{equation}\label{E:VFnl}
V_{a}(f):=\| f\|^{2}_{\dot{H}_{a}^{1}}+\|f\|^{6}_{L_{x}^{6}}
-\tfrac{3}{4}\|f\|^{4}_{L_{x}^{4}}
\end{equation}
is guaranteed to be positive. To make this precise, we first introduce the quantity 
\begin{equation*}
\EE_{a}(m):=\inf\{E_{a\wedge 0}(f): f\in H^{1}(\R^{3}),\, M(f)=m \,\, \mbox{and}\,\, V_{a\wedge 0}(f)=0 \}, 
\end{equation*}
where $a\wedge b:=\min\left\{a,b\right\}$.  We then define the region $\K_{a}\subset \R^{2}$ by 
\begin{equation}
\label{Virial}
\K_{a}:= \{(m,e): 0<m<M(Q_{1, a\wedge 0}) \,\, \mbox{and} \,\, 0<e<\EE_{a}(m)\},
\end{equation}
where $Q_{1, a\wedge 0}$ is an optimizer of \eqref{InvGN} with $\alpha=1$ (see Section~\ref{S:SGNH}).  By Corollary \ref{GNSharp} below, we may also write $M(Q_{1, a\wedge 0})=\(\frac{8}{3}\)^{2}C^{-2}_{1, a\wedge 0}$, where $C_{\alpha,a}$ denotes the sharp constant in \eqref{InvGN}. 

Our main result is the following theorem.  We note that the lower bound on $a$ arises in the local theory for the equation (see e.g. \cite{KillipMiaVisanZhangZheng}). 

\begin{theorem}\label{MainTheorem}
Assume $a>-\frac{1}{4}+\frac{1}{25}$. Let $u_{0}\in H^{1}(\R^{3})$ satisfy $(M(u_{0}),E_{a}(u_{0}))\in \K_{a}$.  Then the corresponding solution $u$ of \eqref{NLS} with initial data $u_{0}$ is global and satisfies
\[
\|  u \|_{L^{10}_{t, x}(\R\times\R^{3})}\leq C(M(u_{0}), E_{a}(u_{0})).
\]
In particular, the solution scatters in $ H^{1}(\R^{3})$ in the sense of \eqref{Defscat}.
\end{theorem}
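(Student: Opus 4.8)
The plan is to establish scattering via the concentration-compactness/rigidity method of Kenig--Merle, adapted to the cubic-quintic setting with inverse-square potential following the strategy of \cite{KillipOhPoVi2017}. Let me think about how the argument should go.

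First, I need the variational characterization: on the region $\K_a$, the virial functional $V_a(u(t))$ stays bounded below by a positive quantity, uniformly in time. This comes from a coercivity estimate derived from the definition of $\EE_a(m)$ and the conservation of mass and energy. The key point is that if $(M(u_0), E_a(u_0)) \in \K_a$, then the solution trajectory stays in a region where $V_a$ is positive, bounded away from zero in terms of the relevant norms. Actually, since $a \ge a \wedge 0$, one needs to compare $E_a$ with $E_{a \wedge 0}$ and $V_a$ with $V_{a \wedge 0}$; when $a \ge 0$ the potential is repulsive and helps, when $a < 0$ we have $a = a \wedge 0$.

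Then I'd set up the standard scheme: (1) small-data scattering and stability theory — this presumably follows from the local theory cited; (2) a variational/coercivity analysis showing uniform bounds on $\|u(t)\|_{H^1}$ and positivity of the virial on $\K_a$; (3) a linear profile decomposition adapted to $e^{-it\L_a}$ — here the inverse-square potential requires care, and the relevant tool is the equivalence of Sobolev spaces \eqref{equiNorms} plus known dispersive/Strichartz estimates for $e^{-it\L_a}$ (from the Killip--Miao--Visan--Zhang--Zheng type results cited); (4) construction of a minimal-mass-energy blowup solution (a critical element) assuming the theorem fails, with a compact (modulo symmetries) orbit; (5) rigidity — ruling out the critical element via a truncated virial/Morawetz argument, using exactly the positivity of $V_a$ and the repulsivity properties of the inverse-square potential.

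The main obstacle, I expect, will be step (4)–(5) in the presence of the potential. The profile decomposition must handle profiles whose spatial scales shrink toward the origin (where the potential is singular) versus those that escape to infinity (where $e^{-it\L_a}$ behaves like the free flow $e^{it\Delta}$); one decouples these by a scaling-parameter dichotomy and, in the "escaping" regime, approximates nonlinear profiles by solutions of the potential-free cubic-quintic NLS \eqref{NLSfree}, invoking the scattering theory of \cite{KillipOhPoVi2017} as a black box. The translation symmetry is broken by the potential, so one cannot translate profiles freely — instead the parameter is whether $|x_n|/\lambda_n \to \infty$ or stays bounded. For the rigidity step, the truncated virial identity picks up error terms from the potential; since $a \ge 0$ contributes a favorable sign and for $a<0$ the relevant comparison is built into the definition of $\K_a$ via $\EE_{a\wedge 0}$, the virial functional $V_a(u(t)) \ge \delta > 0$ along the critical element forces the spacetime norm to grow, contradicting compactness. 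Finally, scattering in $H^1$ in the sense of \eqref{Defscat} follows from the global spacetime bound $\|u\|_{L^{10}_{t,x}} < \infty$ together with Strichartz estimates and the $H^1$ subcritical nature of both nonlinearities.
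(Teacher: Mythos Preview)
Your overall framework is correct and matches the paper's concentration-compactness/rigidity strategy.  There is, however, a genuine gap in your step~(4): you only identify the ``escaping'' regime ($\lambda_n\equiv 1$, $|x_n|\to\infty$), in which nonlinear profiles are approximated by solutions of the potential-free cubic-quintic equation \eqref{NLSfree}.  But the profile decomposition also produces profiles at small scales $\lambda_n\to 0$, and here the cubic term is \emph{not} the right model --- under the $\dot H^1$-critical rescaling the cubic nonlinearity gains a positive power of $\lambda_n$ and becomes negligible, so one must approximate by solutions of the \emph{energy-critical quintic} equation.  This splits further into two sub-cases: if $x_n\equiv 0$ one needs scattering for the quintic NLS \emph{with} inverse-square potential (Theorem~\ref{CriticalWP}, from \cite{KillipMiaVisanZhangZheng}); if additionally $|x_n/\lambda_n|\to\infty$ one uses the standard quintic NLS (from \cite{TaoKell2008}).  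The paper handles exactly these three scenarios in Proposition~\ref{P:embedding}.  Without the small-scale analysis you cannot show that every nonlinear profile has finite $L^{10}_{t,x}$-norm, and the Palais--Smale argument does not close.

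Two smaller corrections to your rigidity sketch.  First, the truncated virial picks up \emph{no} error from the potential: because $a|x|^{-2}$ scales exactly like $-\Delta$, the virial identity for $\L_a$ is as clean as the free one, and the only error terms come from the spatial cutoff (controlled by compactness of the orbit via \eqref{Uniform}, not by any sign consideration on $a$).  Second, the contradiction is not that ``the spacetime norm grows''; rather, $\partial_{tt}I(t)\geq\eta>0$ together with the uniform bound $|\partial_t I(t)|\lesssim R$ is incompatible over a sufficiently long time interval.
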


Theorem~\ref{MainTheorem} parallels the scattering result obtained in \cite{KillipOhPoVi2017} for the standard cubic-quintic equation.  As in \cite{KillipOhPoVi2017}, we can give a more precise description of the scattering region.  To do so, we first introduce the following sharp $\alpha$-Gagliardo--Nirenberg--H\"older inequality:
\begin{equation}\label{InvGN}
\|f\|^{4}_{L_{x}^{4}}\leq 
C_{\alpha, a}\|f\|_{L_{x}^{2}}\|f\|^{\frac{3}{1+\alpha}}_{\dot{H}^{1}_{a}}\|f\|^{\frac{3\alpha}{1+\alpha}}_{L_{x}^{6}}, \quad \alpha\in(0,\infty).
\end{equation}
The optimization problem for \eqref{InvGN} leads to the stationary problem
\[
\L_{a}Q_{\alpha,a}+Q_{\alpha,a}^{5}-Q^{3}_{\alpha,a}+\omega Q_{\alpha,a}=0.
\]
If this problem admits a solution $Q_{\alpha,a}$, then we obtain a (nonscattering) solution to \eqref{NLS} given by $u(t)=e^{i\omega t}Q_{\alpha,a}$.  We will prove that optimizers for \eqref{InvGN} exist when $a\leq0$, while for $a>0$ we obtain $C_{\alpha,a}=C_{\alpha,0}$ but equality is never attained.  Denoting by $Q_{1, a\wedge 0}$ any optimizer of \eqref{InvGN} with $\alpha=1$, we then have the following: 
\begin{itemize}
\item[(i)] For $a>-\frac{1}{4}$ we have the inclusion $\K_{a}\subseteq \K_{0}$ (see  Corollary~\ref{Compa22}). Moreover, by definition,
$\K_{a}=\K_{0}$ for $a\geq 0$. 
\item[(ii)] We set 
\[
S_{a}(x):=\tfrac{1}{\sqrt{2}}Q_{1, a\wedge 0}(\tfrac{\sqrt{3}}{2}x).
\]
Direct calculation shows that 
\[
M(S_{a})<M(Q_{1, a\wedge 0}),\quad V_{a\wedge 0}(S_{a})=0,\qtq{and}E_{a\wedge 0}(S_{a})>0.
\]
Then $\EE_{a}(\cdot)$ satisfies:
\[\EE_{a}(m)
\begin{cases} 
=\infty & {m\in (0, M(S_{a}))},\\
\in (0, E_{a\wedge 0}(S_{a})]& {m\in [M(S_{a}), M(Q_{1, a\wedge 0}))},\\
= 0  & {m=M(Q_{1, a\wedge 0})}
\end{cases} 
\]
and $\EE_{a}$ is strictly decreasing and lower semicontinuous on the interval $[M(S_{a}), M(Q_{1, a\wedge 0})]$ (see Theorem~\ref{Vscatte2}). 
\end{itemize} 

We may depict the region $\K_a$ in the following figure:
\vspace{0.1mm}

\begin{figure}[h]\label{Figu1}
\caption{Mass/Energy Plane}
\centering
\includegraphics[width=0.8 \textwidth]{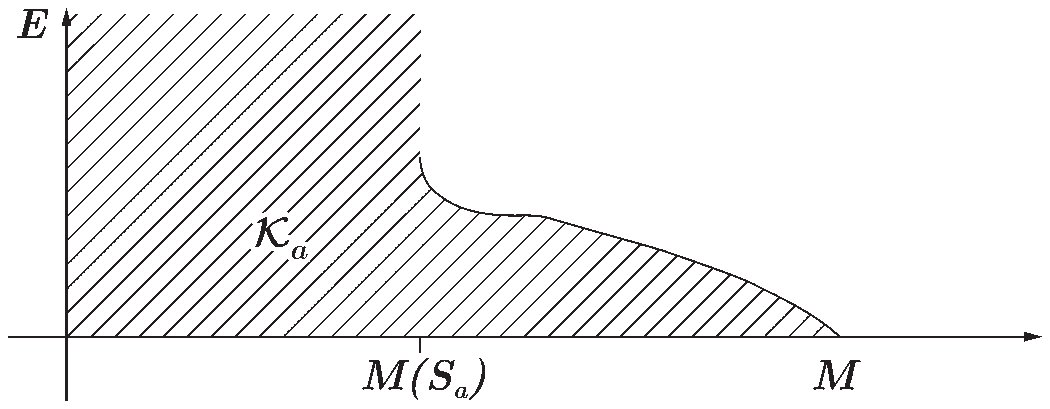}
\end{figure}

%

As mentioned above, Theorem~\ref{MainTheorem} is an analogue of the main scattering result in \cite{KillipOhPoVi2017}.  Accordingly, much of what we do parallels the overall argument in \cite{KillipOhPoVi2017}, utilizing various results from \cite{KMVZZ2018, KillipMiaVisanZhangZheng, KillipMurphyVisanZheng2017} as well.  In order to minimize replication of existing results, we have opted to omit certain proofs throughout the paper, directing the reader instead to the appropriate results in the works just cited.  In particular, this allows us to focus attention on the parts of the argument where new ideas are needed.

Our choice of the inverse-square potential was motivated by several factors.  First, the tools needed for the analysis (e.g. Strichartz estimates, well-posedness and stability theory, Littlewood--Paley theory adapted to $\L_a$, concentration-compactness tools, and more) have already been established for this model (see e.g. \cite{KMVZZ2018, KillipMiaVisanZhangZheng, BPSTZ}). Indeed, these tools have been applied in several instances in the case of a single power nonlinearity (see e.g. \cite{KillipMiaVisanZhangZheng, LU20183174, YANG2020124006, KillipMurphyVisanZheng2017, MMZ2020, MMZWave}).  In fact, a large-data scattering theory for the underlying quintic equation (obtained in \cite{KillipMiaVisanZhangZheng}) is a necessary ingredient for the present work, as we explain below.  Much of the success in treating large-data problems for the NLS with an inverse-square potential is due to the fact that in this case, the potential shares the same scaling as the Laplacian.  This ultimately manifests in the ability to derive virial identities and estimates that parallel the case of NLS without potential.  A final appealing feature of working with an inverse-square potential is that we may consider the effect of both attractive and repulsive potentials simply by varying the sign of the coupling constant.

The proof of Theorem~\ref{MainTheorem} follows the concentration-compactness approach, based on an induction scheme in the mass-energy plane analogous to that in \cite{KillipOhPoVi2017}.  The key components of the proof are therefore (i) the variational analysis needed to define and describe the region in the mass-energy plane corresponding to positive virial and (ii) the construction of a minimal blowup solution (under the assumption that Theorem~\ref{MainTheorem} fails). With such a solution in hand, we reach a contradiction by an application of the localized virial argument.

An interesting aspect of the analysis arises in step (ii) above.  In this step, one is interested in obtaining compactness for a sequence of initial data corresponding to solutions with diverging space-time norms.  The key to precluding dichotomy is to argue by contradiction and then develop a `nonlinear profile decomposition' for the sequence of solutions.  This, in turn, requires the construction of scattering solutions to \eqref{NLS} corresponding to each profile appearing in the linear profile decomposition for the data. However, these profiles may be parametrized by nontrivial scaling and translation parameters, while both the scaling and translation symmetries are broken in the model \eqref{NLS}.  Thus, as has already been observed in works such as \cite{KillipOhPoVi2017, KillipMiaVisanZhangZheng}, the key to constructing the nonlinear profiles is to appeal to the stability theory, using solutions to suitable `limiting' model equations as approximate solutions to the full equation.  In the setting of \eqref{NLS}, we found that we must contend with three distinct scenarios:
\begin{itemize}
\item For unit-scale profiles centered far from the origin, we approximate with solutions to the standard cubic-quintic NLS without potential \eqref{NLSfree}, relying on the scattering result of \cite{KillipOhPoVi2017} and the fact that $\mathcal{K}_a\subset \mathcal{K}_0$.
\item For small-scale profiles centered near the origin, we approximate with solutions to the quintic NLS with inverse-square potential \eqref{CriticalNLS}, relying on the scattering result of \cite{KillipMiaVisanZhangZheng}.
\item For small-scale profiles centered far from the origin (relative to their length scale), we approximate with solutions to the standard quintic NLS without potential \eqref{3dquintic}, relying on the scattering result of \cite{TaoKell2008}.
\end{itemize} 
Roughly speaking, we see that at small scales the cubic nonlinearity may be neglected, while far from the origin the potential may be neglected.  For more details, see Proposition~\ref{P:embedding}.

In \cite{KillipMurphyVisan2020}, the authors additionally succeeded in proving scattering for the model \eqref{NLSfree} in an open neighborhood of $\K_0$.  In particular, this neighborhood contains any part of the boundary that is not represented by a solitary wave; it also yields a strictly larger mass threshold for scattering (without any constraint on the energy). It is natural to consider the analogous problem in the present setting, at least in the case $a<0$ (when solitary waves are present).  Presently, however, certain ingredients are missing (e.g. uniqueness of ground states for the underlying stationary problem) that ultimately leave an analogous result mostly out of reach.  Thus, we have opted to leave the investigation of scattering beyond the region $\K_a$ for a future work.  Similarly, the behavior near the boundary of $\K_a$ in the regime $a>0$ is an interesting direction for future investigation.

The rest of this paper is organized as follows:
\begin{itemize}
\item In Section~\ref{S1:preli}, we set up some notation and collect some preliminary lemmas.  This includes some well-posedness and stability results for \eqref{NLS}. 
\item In Section~\ref{sec:VarationalGNI}, we study the problem of the existence of optimizers for the $\alpha$-Gagliardo--Nirenberg--H\"older inequality.  In particular, we study the variational problem for $\mathcal{E}_a(m)$ introduced above and prove the properties of $\K_a$ described above. 
\item In Section~\ref{Sec:CMBS}, we carry out the construction of minimal blowup solutions under the assumption that Theorem~\ref{MainTheorem} fails.   
\item In Section~\ref{Su8} we use the localized virial argument to preclude the possibility of minimal blowup solutions, thus completing the proof of Theorem~\ref{MainTheorem}.
\end{itemize}

\subsection*{Acknowledgements} J. M. was supported by a Simons Collaboration Grant.

\section{Preliminaries}\label{S1:preli} 
We write $A\lesssim B$ or $A=O(B)$ when $A\leq CB$ for some $C>0$. If  $A \lesssim B \lesssim A$ we write $A\sim B$. We write $a\wedge b=\min\left\{a,b\right\}$. For a function 
$u:I\times \R^{3}\rightarrow \C$ we use the notation
\[ 
\|  u \|_{L_{t}^{q}L^{r}_{x}(I\times \R^{3})}=\|  \|u(t) \|_{L^{r}_{x}(\R^{3})}  \|_{L^{q}_{t}(I)}
 \]
with $1\leq q\leq r\leq\infty$. When $q=r$ we abbreviate $L_{t}^{q}L^{r}_{x}$ by $L_{t,x}^{q}$.

We define the Sobolev spaces associated with $\L_{a}$ via
\[
\|f\|_{\dot{H}^{s, r}_{a}(\R^{3})}=\|(\L_{a})^{\frac{s}{2}}\|_{L^{r}_{x}(\R^{3})}
\quad \text{and}\quad 
\|f\|_{{H}^{s,r}_{a}(\R^{3})}=\|(1+\L_{a})^{\frac{s}{2}}\|_{L^{r}_{x}(\R^{3})}.
\]
We abbreviate $\dot{H}^{s}_{a}(\R^{3})=\dot{H}^{s,2}_{a}(\R^{3})$ and ${H}^{s}_{a}(\R^{3})={H}^{s,2}_{a}(\R^{3})$.
Given $p\in [1, \infty]$, we let $p'\in [1, \infty]$ denote the H\"older dual of $p$.

To state the results that follow, it is also convenient to define
\[
\rho:=\tfrac{d-2}{2}-\big[\(\tfrac{d-2}{2}\)^{2}+a\big]^{\frac{1}{2}}
\quad \text{and}\quad 
q_{0}:=\begin{cases} 
\infty \quad \text{if $a\geq 0$},\\
\frac{d}{\rho}\quad \text{if $-\(\frac{d-2}{2}\)^{2}\leq a<0$}.
\end{cases} 
\]

The following lemma from \cite{KMVZZ2018} summarizes the situation regarding equivalence of Sobolev spaces: 
\begin{lemma}\label{EquiSobolev}
Fix $d\geq 3$, $a\geq \( \frac{d-2}{2}\)^{2}$ and $0<s<2$. If $1<p<\infty$ satisfies 
$\frac{\sigma+s}{d}<\frac{1}{p}<\min\left\{1, \frac{d-\sigma}{d}\right\}$, then 
\[
\|(-\Delta)^{\frac{s}{2}}f\|_{L_{x}^{p}}\lesssim_{d,p,s}\|\L_{a}^{\frac{s}{2}}f\|_{L_{x}^{p}}
\quad \text{for $f\in C^{\infty}_{c}(\R^{d}\setminus\left\{{0}\right\})$}.
\]
If $\max\left\{\frac{s}{d}, \frac{\sigma}{d}\right\}<\frac{1}{p}<\min\left\{1, \frac{d-\sigma}{d}\right\}$, then 
\[
\|\L_{a}^{\frac{s}{2}}f\|_{L_{x}^{p}}\lesssim_{d,p,s}\|(-\Delta)^{\frac{s}{2}}f\|_{L_{x}^{p}}
\quad \text{for  $f\in C^{\infty}_{c}(\R^{d}\setminus\left\{{0}\right\})$}.
\]
In particular, if $a>-\frac{1}{4}+\frac{1}{25}$, then 
\[
\|\L_{a}^{\frac{s}{2}}f\|_{L_{x}^{p}}\sim\|(-\Delta)^{\frac{s}{2}}f\|_{L_{x}^{p}}
\quad \text{for all  $\tfrac{6}{5}\leq p\leq \tfrac{30}{13}$}.
\]
\end{lemma}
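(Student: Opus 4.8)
The plan is to reproduce the argument of \cite{KMVZZ2018}; here $\sigma$ stands for $\rho=\tfrac{d-2}{2}-\bigl[(\tfrac{d-2}{2})^{2}+a\bigr]^{1/2}$. First I would reduce the two one-sided inequalities to the $L^p$-boundedness of the operators
\[
T_1:=(-\Delta)^{s/2}\L_a^{-s/2}\qtq{and}T_2:=\L_a^{s/2}(-\Delta)^{-s/2}.
\]
Indeed, since $C^\infty_c(\R^d\setminus\{0\})$ is dense in the homogeneous Sobolev spaces in play, the substitution $g=\L_a^{s/2}f$ turns the first asserted bound into $\|T_1 g\|_{L^p}\lesssim\|g\|_{L^p}$, while $g=(-\Delta)^{s/2}f$ turns the second into $\|T_2 g\|_{L^p}\lesssim\|g\|_{L^p}$; the equivalence $\sim$, and the final display, then follow by applying the two bounds on the overlapping range of exponents.

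The analytic input is the sharp two-sided heat kernel bound for $\L_a$ (Liskevich--Sobol, Milman--Semenov; see the exposition in \cite{KMVZZ2018}): for $a\ge-(\tfrac{d-2}{2})^{2}$,
\[
e^{-t\L_a}(x,y)\approx t^{-d/2}\Bigl(1+\tfrac{\sqrt t}{|x|}\Bigr)^{\!\rho}\Bigl(1+\tfrac{\sqrt t}{|y|}\Bigr)^{\!\rho}\exp\!\Bigl(-\tfrac{|x-y|^{2}}{ct}\Bigr),
\]
which encodes the $|x|^{-\rho}$ behaviour of the resolvent kernel at the origin (a singularity when $a<0$, i.e.\ $\rho>0$; a decay when $a>0$). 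To bound $T_2$ I would write $\L_a^{s/2}=\tfrac{1}{|\Gamma(-s/2)|}\int_0^\infty(1-e^{-t\L_a})\,t^{-1-s/2}\,dt$, compose with the Riesz kernel $|x-y|^{-(d-s)}$ of $(-\Delta)^{-s/2}$, and estimate the resulting integral kernel $K(x,y)$ via the heat kernel bound: the range $\sqrt t\lesssim|x-y|$ assembles, using the cancellation in $1-e^{-t\L_a}$ and the $t$-integration, a Calder\'on--Zygmund-type kernel of size $|x-y|^{-d}$, modulated near the corners $|x|\ll|x-y|$ and $|y|\ll|x-y|$ by weights $(|x-y|/|x|)^{\rho}$ and $(|x-y|/|y|)^{\rho}$, while $\sqrt t\gtrsim|x-y|$ gives an integrable tail. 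This produces a pointwise bound dominated near the origin by $|x|^{-\rho}|x-y|^{-(d-\rho)}+|y|^{-\rho}|x-y|^{-(d-\rho)}$ (with the CZ estimate away from it), and the Stein--Weiss fractional-integration inequality then yields $\|T_2 g\|_{L^p}\lesssim\|g\|_{L^p}$ exactly when $\tfrac{\sigma}{d}<\tfrac1p<\tfrac{d-\sigma}{d}$ together with $\tfrac sd<\tfrac1p$ (the latter for convergence of the $t$-integral) --- precisely the range in the second displayed inequality. The operator $T_1$ is handled identically with $\L_a$ and $-\Delta$ interchanged, the left factor $(-\Delta)^{s/2}$ raising the lower threshold from $\tfrac{\sigma}{d}$ to $\tfrac{\sigma+s}{d}$, which is the range in the first inequality.

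For the final assertion one specializes to $d=3$, where $\rho=\tfrac12-(\tfrac14+a)^{1/2}$ is strictly decreasing in $a$; thus $a>-\tfrac14+\tfrac1{25}$ forces $\rho<\tfrac12-\tfrac15=\tfrac3{10}$. Then $\tfrac{\sigma\vee0}{3}<\tfrac1{10}$ and $\tfrac{3-\sigma\vee0}{3}>\tfrac9{10}$, while $\tfrac65\le p\le\tfrac{30}{13}$ means $\tfrac1p\in[\tfrac{13}{30},\tfrac56]\subset(\tfrac1{10},\tfrac9{10})$; moreover $\tfrac{\sigma+s}{3}\le\tfrac{\rho+1}{3}<\tfrac{13}{30}\le\tfrac1p$ in the range of $s$ relevant here (in particular $s=1$, the case used repeatedly in this paper). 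Hence the hypotheses of both preceding parts hold uniformly for $\tfrac65\le p\le\tfrac{30}{13}$, and the two one-sided bounds combine to the stated equivalence.

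I expect the main obstacle to be the weighted kernel estimate near $x=0$ for $T_2$ (and dually near $y=0$ for $T_1$): one must control the $t$-integral uniformly against the singular factor $(1+\sqrt t/|x|)^{\rho}$ and pin down the sharp range of $p$ for which the resulting degree-$(-d)$ kernel carrying a $|x|^{-\rho}$ singularity is bounded on $L^p$. The remaining ingredients --- subordination, the Gaussian off-diagonal decay, and the density reduction --- are routine. Since this weighted analysis is carried out in detail in \cite{KMVZZ2018}, we simply quote it here.
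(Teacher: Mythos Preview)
The paper does not prove this lemma at all; it is stated as an import from \cite{KMVZZ2018} and no argument is given. Your proposal correctly identifies this and sketches the argument from that reference (reduction to $L^p$-boundedness of $(-\Delta)^{s/2}\L_a^{-s/2}$ and its inverse, heat-kernel bounds with the $|x|^{-\rho}$ weight, Stein--Weiss), so there is nothing to compare --- your approach \emph{is} the cited one, and your closing sentence (``we simply quote it here'') matches exactly what the paper does.

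One small remark on the verification of the ``In particular'' clause: you are right to flag that the condition $\tfrac{\sigma+s}{3}<\tfrac1p$ is only checked for $s=1$ (and indeed fails for $s$ near $2$ with $p=\tfrac{30}{13}$, since then $\tfrac{s}{3}\to\tfrac23>\tfrac{13}{30}$). This is a genuine imprecision in the lemma as stated in the paper, not an error in your argument; in practice the equivalence is only invoked at $s=1$ throughout, where your check $\tfrac{\rho+1}{3}<\tfrac{13}{30}$ is exactly what is needed.
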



We will need some Littlewood--Paley theory adapted to $\L_a$ (as developed in \cite{KMVZZ2018}).  Let $\phi\in C^{\infty}_{c}(\R^{3})$ be a smooth positive radial function obeying 
$\phi(x)=1$ if $|x|\leq 1$  and $\phi(x)=0$ if $|x|\geq \frac{11}{10}$. For $N\in 2^{\Z}$, we define 
\[
\phi_{N}(x):=\phi(x/N)  \quad \text{and}\quad      \psi_{N}(x)=\phi_{N}(x)-\phi_{N/2}(x).
\]
We define the Littlewood-Paley projections
\begin{align*}
f_{\leq N}&:=P^{a}_{\leq N}f:=\phi_{N}(\sqrt{\L_{a}}), \quad f_ {N}:=P^{a}_{N}f:=\psi_{N}(\sqrt{\L_{a}}),\\
&\quad \text{and}\quad f_{> N}:=P^{a}_{> N}f:=(I-P^{a}_{\leq N})f
\end{align*}

The Littlewood-Paley projections obey the following estimates.
\begin{lemma}[Bernstein inequalities, \cite{KMVZZ2018}]
Let $s\in \R$. For $q^{\prime}_{0}<q\leq r<q$ and $f: \R^{d}\to \C$ we have
\begin{align*}
	\| P^{a}_{N}f   \|_{L_{x}^{r}}&\lesssim N^{\frac{d}{q}-\frac{d}{r}}\| P^{a}_{N}f   \|_{L_{x}^{q}},\\
	\| P^{a}_{\geq N}f   \|_{L_{x}^{r}}&\lesssim N^{-s}\| \L_{a}^{\frac{s}{2}} P^{a}_{\geq N}f   \|_{L_{x}^{r}},\\
	N^{s}\| P^{a}_{N}f   \|_{L_{x}^{r}}&\sim \|\L^{\frac{s}{2}}_{a} P^{a}_{N}f   \|_{L_{x}^{r}}.
\end{align*}
\end{lemma}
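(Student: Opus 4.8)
The plan is to deduce all three estimates from pointwise bounds on the integral kernels of the Littlewood--Paley projectors $P^{a}_{N}=\psi_{N}(\sqrt{\L_{a}})$ and of the companion operators $\L_{a}^{\pm s/2}P^{a}_{N}$, and then to conclude by (weighted) Young's/Schur-type inequalities. The first ingredient is the weighted Gaussian bound for the heat kernel of $\L_{a}$: there are constants $c,C>0$ so that, with $\rho$ as defined above,
\[
0\le e^{-t\L_{a}}(x,y)\le C\, t^{-\frac{d}{2}}\Big(1+\tfrac{\sqrt{t}}{|x|}\Big)^{\rho}\Big(1+\tfrac{\sqrt{t}}{|y|}\Big)^{\rho}\exp\!\Big(-\tfrac{|x-y|^{2}}{ct}\Big),
\]
the weights being present only in the attractive case $a<0$ (for $a\geq0$ the bound follows from domination by the free heat kernel). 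From this, together with the fact that $\psi$ is Schwartz and vanishes to infinite order at the origin, a spectral-multiplier argument (writing $\psi_{N}(\sqrt{\L_{a}})$ in terms of the semigroup via the Laplace transform of $\psi$, or invoking the Mikhlin--H\"ormander theorem available under Gaussian bounds) yields, for every $M\ge0$,
\[
\big|P^{a}_{N}(x,y)\big|\le C_{M}\,N^{d}\Big(1+\tfrac{1}{N|x|}\Big)^{\rho}\Big(1+\tfrac{1}{N|y|}\Big)^{\rho}\big(1+N|x-y|\big)^{-M},
\]
together with the analogous bounds $\big|(\L_{a}^{s/2}P^{a}_{N})(x,y)\big|\lesssim_{M} N^{s}\cdot(\text{same RHS})$ and $\big|(\L_{a}^{-s/2}\tilde P^{a}_{N})(x,y)\big|\lesssim_{M} N^{-s}\cdot(\text{same RHS})$, where $\tilde P^{a}_{N}$ denotes a fattened projector obeying $P^{a}_{N}\tilde P^{a}_{N}=P^{a}_{N}$. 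The point is that $\rho$ is precisely the exponent governing the local behaviour of eigenfunctions of $\L_{a}$ at the origin, so the weight $(1+\tfrac{1}{N|x|})^{\rho}$ belongs to $L^{p}$ of the $N^{-1}$-ball about the origin exactly when $p<q_{0}$, with a dual restriction $p>q_{0}'$.

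Given these kernel bounds, the first estimate reduces to showing $\tilde P^{a}_{N}\colon L^{q}_{x}\to L^{r}_{x}$ with norm $\lesssim N^{\frac{d}{q}-\frac{d}{r}}$ and then applying it to $g=P^{a}_{N}f$ (using $P^{a}_{N}f=\tilde P^{a}_{N}(P^{a}_{N}f)$); the $L^{q}\to L^{r}$ bound itself follows from the kernel estimate by a weighted Schur/Young argument, which is exactly where the hypothesis $q_{0}'<q\le r<q_{0}$ is used (the exponent $\tfrac1s=1-\tfrac1q+\tfrac1r$ controlling the $L^{s}$-size of the unweighted part, and the range $(q_{0}',q_{0})$ absorbing the two weights). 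The third estimate is obtained the same way from the kernel bounds for $\L_{a}^{\pm s/2}\tilde P^{a}_{N}$ combined with the identities $\L_{a}^{s/2}P^{a}_{N}=(\L_{a}^{s/2}\tilde P^{a}_{N})P^{a}_{N}$ and $P^{a}_{N}=(\L_{a}^{-s/2}\tilde P^{a}_{N})(\L_{a}^{s/2}P^{a}_{N})$, which give the two directions of the equivalence with gains $N^{s}$ and $N^{-s}$ respectively. Finally, for the second estimate (a genuine gain when $s\ge0$) one writes $P^{a}_{\geq N}=(\L_{a}^{-s/2}\tilde P^{a}_{\geq N})(\L_{a}^{s/2}P^{a}_{\geq N})$ and uses $\|\L_{a}^{-s/2}\tilde P^{a}_{\geq N}\|_{L^{r}_{x}\to L^{r}_{x}}\lesssim N^{-s}$, which again follows from a kernel estimate of the same shape (equivalently, from the uniform-in-$N$ $L^{r}$-boundedness of the multiplier $\lambda^{-s}\tilde\phi(\lambda/N)$).

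The only real obstacle is the weight $(1+\tfrac{1}{N|x|})^{\rho}$ arising from the singularity of the potential at the origin: it is non-integrable near $x=0$, and taming it in the Schur/Young step is precisely what forces the two-sided restriction $q_{0}'<q\le r<q_{0}$ (equivalently, in the three-dimensional setting of this paper with $a>-\tfrac14+\tfrac1{25}$, the admissible range $\tfrac65\le p\le\tfrac{30}{13}$). Thus the substance of the lemma lies in (i) the weighted heat-kernel bounds and (ii) their transfer to sharp kernel bounds for the individual Littlewood--Paley pieces; once these are in hand, the Bernstein inequalities follow by routine applications of Young's and H\"older's inequalities. Since (i) and (ii) are exactly the content of \cite{KMVZZ2018}, in the text one simply cites them.
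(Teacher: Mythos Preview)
Your sketch is correct and faithfully outlines the argument from \cite{KMVZZ2018}: weighted Gaussian heat-kernel bounds for $e^{-t\L_a}$, transferred via spectral multiplier theory to pointwise kernel estimates for $P^a_N$ and $\L_a^{\pm s/2}\tilde P^a_N$ carrying the weight $(1+\tfrac{1}{N|x|})^\rho$, followed by a weighted Schur/Young estimate that forces the range $q_0'<q\le r<q_0$. The paper itself gives no proof at all---the lemma is simply imported from \cite{KMVZZ2018} by citation---so your final sentence is exactly right: in the text one just cites the reference, and what you have written is a reasonable expansion of what that citation contains.
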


\begin{lemma}[Square function estimate, \cite{KMVZZ2018}] Let $0\leq s<2$ and $q^{\prime}_{0}<r<q_{0}$. Then we have
\[
\Big\|\Big(  \sum_{N\in 2^{\Z}}N^{2s}| P^{a}_{N}f |^{2}\Big)^{\frac{1}{2}}\Big\|_{L^{r}_{x}}
\sim
\|\L^{\frac{s}{2}}_{a} f   \|_{L_{x}^{r}}.
\]
\end{lemma}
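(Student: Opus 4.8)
I would deduce both inequalities from a Mikhlin--H\"ormander spectral multiplier theorem for $\L_{a}$ together with a randomization argument; this is the standard route and is essentially the proof in \cite{KMVZZ2018} referenced above. The one ingredient I take as given is the multiplier theorem itself: for $q_{0}'<r<q_{0}$ and a function $m$ on $(0,\infty)$ with $\|m\|_{*}:=\sup_{\lambda>0}\sum_{k\le\lceil d/2\rceil+1}|\lambda^{k}\d_{\lambda}^{k}m(\lambda)|<\infty$, the operator $m(\sqrt{\L_{a}})$ defined by the spectral theorem satisfies $\|m(\sqrt{\L_{a}})\|_{L^{r}_{x}\to L^{r}_{x}}\lesssim\|m\|_{*}$. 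Equivalently, one may take as input that $\L_{a}$ has a bounded $H^{\infty}$ functional calculus on $L^{r}_{x}$ for $q_{0}'<r<q_{0}$, whence two-sided square function bounds follow from the Cowling--Doust--McIntosh--Yagi theory; I will follow the more hands-on multiplier route.

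\emph{The upper bound $\lesssim$.} Let $(\eps_{N})_{N\in2^{\Z}}$ be a sequence of independent random signs and, for the given $s\in[0,2)$, form the symbol $m_{\eps}(\lambda):=\sum_{N\in2^{\Z}}\eps_{N}\,(\lambda/N)^{-s}\psi_{N}(\lambda)$. Since each $\psi_{N}$ is supported in $\lambda\sim N$, these supports have bounded overlap, and $(\lambda/N)^{-s}\psi_{N}(\lambda)$ is a fixed bump function in the rescaled variable $\lambda/N$, a routine check gives $\|m_{\eps}\|_{*}\lesssim1$ uniformly in the signs. Because $N^{s}P_{N}^{a}f=\bigl((\lambda/N)^{-s}\psi_{N}\bigr)(\sqrt{\L_{a}})\,\L_{a}^{s/2}f$, the multiplier theorem yields
\[
\Bigl\|\sum_{N\in2^{\Z}}\eps_{N}\,N^{s}P_{N}^{a}f\Bigr\|_{L^{r}_{x}}\lesssim\|\L_{a}^{s/2}f\|_{L^{r}_{x}}
\]
uniformly in the choice of signs. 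Raising to the power $r$, averaging over the signs, applying Fubini, and invoking Khintchine's inequality pointwise in $x$ (the average of $|\sum_{N}\eps_{N}c_{N}|^{r}$ is comparable to $(\sum_{N}|c_{N}|^{2})^{r/2}$) then gives $\bigl\|(\sum_{N}N^{2s}|P_{N}^{a}f|^{2})^{1/2}\bigr\|_{L^{r}_{x}}\lesssim\|\L_{a}^{s/2}f\|_{L^{r}_{x}}$.

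\emph{The lower bound $\gtrsim$.} I would use duality. Pick $\widetilde{\psi}\in C_{c}^{\infty}((0,\infty))$ equal to $1$ near $\supp\psi_{1}$, put $\widetilde{P}_{N}^{a}:=\widetilde{\psi}(\sqrt{\L_{a}}/N)$, so that $\widetilde{P}_{N}^{a}P_{N}^{a}=P_{N}^{a}$. For $f$ in a dense class, $\L_{a}^{s/2}f=\sum_{N}N^{s}\widetilde{P}_{N}^{a}\bigl(N^{-s}\L_{a}^{s/2}P_{N}^{a}f\bigr)$ with convergence in $L^{r}_{x}$; testing against $g\in L^{r'}_{x}$ and applying Cauchy--Schwarz in $N$ followed by H\"older in $x$,
\[
|\langle\L_{a}^{s/2}f,g\rangle|\le\Bigl\|\bigl(\textstyle\sum_{N}|\L_{a}^{s/2}P_{N}^{a}f|^{2}\bigr)^{1/2}\Bigr\|_{L^{r}_{x}}\Bigl\|\bigl(\textstyle\sum_{N}|\widetilde{P}_{N}^{a}g|^{2}\bigr)^{1/2}\Bigr\|_{L^{r'}_{x}}.
\]
The second factor is $\lesssim\|g\|_{L^{r'}_{x}}$ by the upper bound already proven, now with $s=0$ and $\widetilde{\psi}$ replacing $\psi$ (legitimate since $r\in(q_{0}',q_{0})$ forces $r'\in(q_{0}',q_{0})$, as $r\mapsto r'$ is an involution swapping $q_{0}'$ and $q_{0}$); and since $\L_{a}^{s/2}P_{N}^{a}=N^{s}\bigl((\lambda/N)^{s}\psi_{N}\bigr)(\sqrt{\L_{a}})$ differs from $N^{s}P_{N}^{a}$ only through a symbol of uniformly bounded Mikhlin norm supported in $\lambda\sim N$, the randomized argument above also bounds the first factor by $\bigl\|(\sum_{N}N^{2s}|P_{N}^{a}f|^{2})^{1/2}\bigr\|_{L^{r}_{x}}$. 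Taking the supremum over $\|g\|_{L^{r'}_{x}}\le1$ finishes the proof.

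\emph{The main obstacle.} All of the above is bookkeeping once the multiplier theorem (equivalently, the bounded $H^{\infty}$ calculus) is available for $\L_{a}$ on $L^{r}_{x}$ in the \emph{restricted} range $q_{0}'<r<q_{0}$, and establishing that is the real difficulty. For $a\ge0$ the range is all of $(1,\infty)$ and $e^{-t\L_{a}}$ has a kernel obeying the usual pointwise Gaussian bounds, so the classical theory applies directly; but for $a<0$ the heat kernel genuinely degenerates as $x\to0$ in the manner dictated by the ground-state profile $|x|^{-\rho}$ of $\L_{a}$, satisfying Gaussian estimates only after inserting the corresponding weights. It is precisely this degeneracy that both caps the admissible exponents at $q_{0}$ and forces one to use a version of the H\"ormander multiplier theorem adapted to weighted (rather than unweighted) heat kernel bounds. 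A related point to bear in mind is that $P_{N}^{a}$ is built from the spectral calculus of $\L_{a}$ and not from the Fourier transform, so there is no translation invariance and every instance of ``almost orthogonality'' must be read off from the bounded-overlap property of the supports of the $\psi_{N}$.
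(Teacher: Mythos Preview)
The paper does not give its own proof of this lemma; it is imported wholesale from \cite{KMVZZ2018} as a preliminary tool, so there is no in-paper argument to compare against. Your sketch is the standard route and is essentially what \cite{KMVZZ2018} does: reduce to a Mikhlin--H\"ormander multiplier theorem for $\L_a$ (valid on $L^r$ precisely in the range $q_0'<r<q_0$ because of the weighted heat-kernel bounds when $a<0$), then obtain the upper square-function bound by Khintchine randomization and the lower bound by duality. Your identification of the multiplier theorem as the only substantive ingredient is exactly right.

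One small point worth tightening in the lower bound: the passage from $\bigl\|(\sum_N|\L_a^{s/2}P_N^a f|^2)^{1/2}\bigr\|_{L^r}$ to $\bigl\|(\sum_N N^{2s}|P_N^a f|^2)^{1/2}\bigr\|_{L^r}$ is not literally ``the randomized argument above'' --- that argument bounds a square function by a scalar norm, not one square function by another. What you actually need here is either a vector-valued version of the multiplier bound, or the cleaner observation that $\L_a^{s/2}P_N^a = N^s\widetilde P_N^a$ for a modified bump $\widetilde\psi(\lambda)=\lambda^s\psi(\lambda)$, so that the first factor is already a square function of the same type (just with $\widetilde\psi$ in place of $\psi$), and the full equivalence with either bump then follows from running both the upper and lower arguments twice. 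This is routine, but the sentence as written slightly overstates what the earlier step delivers.
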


We also import the following local smoothing result for the propagator $e^{-it\L_{a}}$; see \cite[Corollary 2.9]{KillipMiaVisanZhangZheng}.  This result is used precisely once in the paper, namely, to control an error term in an approximate solution in the construction of minimal blowup solutions (see \eqref{Bound33}).

\begin{lemma}\label{LocalSmoo}
Let $a>-\frac{1}{4}+\frac{1}{25}$. Given $\phi\in \dot{H}^{1}_{a}(\R^{3})$,
\begin{align*}
	 \|\nabla  e^{-it\L_{a}}\phi   \|_{L^{5}_{t}L_{x}^{\frac{15}{8}}([\tau-T, \tau+T]\times\left\{|x-z|\leq R\right\})}
	&\lesssim T^{\frac{29}{320}}R^{\frac{51}{160}}\| e^{-it\L_{a}}\phi  \|^{\frac{1}{32}}_{L^{10}_{t, x}(\R\times\R^{3})}
	\|\phi\|^{\frac{31}{32}}_{\dot{H}^{1}_{x}}\\
	&+T^{\frac{29}{280}}R^{\frac{41}{140}}\| e^{-it\L_{a}}\phi  \|^{\frac{1}{28}}_{L^{10}_{t, x}(\R\times\R^{3})}
	\|\phi\|^{\frac{27}{28}}_{\dot{H}^{1}_{x}}
\end{align*}
uniformly in $\phi$ and the parameters $R$, $T>0$, $z\in \R^{3}$ and $\tau\in\R$.

\end{lemma}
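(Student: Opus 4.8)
The plan is to prove the bound by a frequency decomposition adapted to $\L_a$, treating low and high frequencies by different mechanisms and then optimizing the cut-off. Write $I=[\tau-T,\tau+T]$, $B=\{|x-z|\leq R\}$, fix a dyadic parameter $N_0\in 2^\Z$, and split $\phi=P^a_{\leq N_0}\phi+P^a_{>N_0}\phi$. For the low-frequency piece, Bernstein replaces $\nabla$ by the harmless factor $N_0$, and one then passes from the space-time--localized norm $L^5_tL^{15/8}_x(I\times B)$ to the global critical norm $L^{10}_{t,x}(\R\times\R^3)$ by two H\"older inequalities: in time, using $5\leq 10$, at cost $T^{1/10}$; in space on the ball $B$, using $\tfrac{15}{8}\leq 10$, at cost $|B|^{13/30}\sim R^{13/10}$. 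Finally one discards the projection, which is legitimate since $P^a_{\leq N_0}$ is bounded on $L^{10}_x$ uniformly in $N_0$; this is one point where the hypothesis $a>-\tfrac14+\tfrac1{25}$ enters, ensuring that $10\in(q_0',q_0)$ so that the Littlewood--Paley calculus of \cite{KMVZZ2018} applies. The low-frequency contribution is thereby bounded by $N_0\,T^{1/10}R^{13/10}\,\|e^{-it\L_a}\phi\|_{L^{10}_{t,x}}$ up to a constant.

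For the high-frequency piece I would use the local smoothing estimate for $e^{-it\L_a}$ (available for $a>-\tfrac14+\tfrac1{25}$; cf. \cite{KillipMiaVisanZhangZheng}), which controls $\nabla e^{-it\L_a}P^a_N\phi$ in $L^2_tL^2_x(\R\times B)$ by $R^{1/2}\|P^a_N\phi\|_{\dot H^{1/2}_a}=R^{1/2}N^{-1/2}\|P^a_N\phi\|_{\dot H^1_a}$, that is, with a gain of a power of $N$ over the energy norm. That gain is exactly what makes the dyadic sum over $N>N_0$ convergent --- via Cauchy--Schwarz in $N$, the square-function estimate, and the equivalence $\dot H^1_a\sim\dot H^1$ of \eqref{equiNorms} --- and it leaves a negative power of $N_0$ paired with $\|\phi\|_{\dot H^1_x}$. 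The one genuine subtlety is that local smoothing lives in $L^2_t$ whereas we must produce $L^5_t$, and $L^2_t\not\hookrightarrow L^5_t$ on a bounded interval. To bridge this I would interpolate $\|g\|_{L^5_tL^{15/8}_x}\leq\|g\|_{L^2_tL^{15/8}_x}^{2/5}\|g\|_{L^\infty_tL^{15/8}_x}^{3/5}$ and estimate the $L^\infty_t$ endpoint crudely, using unitarity of $e^{-it\L_a}$ on $L^2$ together with H\"older on $B$ (since $\tfrac{15}{8}\leq2$) to obtain $\|\nabla e^{-it\L_a}P^a_N\phi\|_{L^\infty_tL^{15/8}_x(B)}\lesssim R^{1/10}\|P^a_N\phi\|_{\dot H^1_a}$. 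Because this endpoint carries no smoothing gain, the interpolation dilutes $N^{-1/2}$ to $N^{-\delta}$ with a smaller but still positive $\delta$, at the price of additional powers of $R$ (and, depending on how the time-localization is apportioned among the steps, of $T$).

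Adding the two contributions gives, for every $N_0\in 2^\Z$, a bound of the shape $N_0\,T^{\gamma}R^{\beta}\|e^{-it\L_a}\phi\|_{L^{10}_{t,x}}+T^{\gamma'}R^{\beta'}N_0^{-\delta}\|\phi\|_{\dot H^1_x}$; choosing $N_0$ to balance the two terms (an appropriate power of $\|\phi\|_{\dot H^1_x}$ over $T^{\cdot}R^{\cdot}\|e^{-it\L_a}\phi\|_{L^{10}_{t,x}}$) then yields an inequality of the stated type, with the small power on $\|e^{-it\L_a}\phi\|_{L^{10}_{t,x}}$ equal to $\tfrac{\delta}{1+\delta}$ and the companion power on $\|\phi\|_{\dot H^1_x}$ equal to $\tfrac{1}{1+\delta}$. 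The two summands in the statement correspond simply to two admissible ways of carrying out the argument --- two choices of the interpolation weights, equivalently two pairs of intermediate Lebesgue/Sobolev exponents inside the window $[\tfrac65,\tfrac{30}{13}]$ of Lemma~\ref{EquiSobolev} --- each already more than sufficient for the single use of the lemma in \eqref{Bound33}; I would not expect the precise exponents recorded there to be optimal. The main obstacle is exactly the $L^5_t$-versus-$L^2_t$ mismatch: the $L^\infty_t$ endpoint one is forced to insert carries no local-smoothing gain, so one must check carefully that after interpolation the surviving power of $N_0$ is still strictly negative (so the high frequencies sum) and the power of $\|e^{-it\L_a}\phi\|_{L^{10}_{t,x}}$ is still strictly positive (so the estimate is not vacuous), all while keeping every Lebesgue exponent inside the range where the Sobolev equivalences of Lemma~\ref{EquiSobolev} and the Littlewood--Paley theory of \cite{KMVZZ2018} are available.
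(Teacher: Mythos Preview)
The paper does not prove this lemma at all: it is imported verbatim from \cite[Corollary~2.9]{KillipMiaVisanZhangZheng}, as stated in the sentence immediately preceding the lemma.  So there is no ``paper's own proof'' to compare against.

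That said, your sketch is exactly the argument that underlies the cited result.  The scheme --- frequency splitting at $N_0$, Bernstein plus H\"older on the ball for low frequencies to land in $L^{10}_{t,x}$, genuine local smoothing for high frequencies to gain a negative power of $N$, interpolation against the energy norm to repair the $L^2_t$-versus-$L^5_t$ mismatch, then optimizing $N_0$ --- is precisely how \cite{KillipMiaVisanZhangZheng} proceeds.  You are also right that the specific exponents in the statement are artifacts of particular exponent choices in that paper and are not sharp; any estimate of the form $T^\alpha R^\beta\|e^{-it\L_a}\phi\|_{L^{10}_{t,x}}^{\gamma}\|\phi\|_{\dot H^1_x}^{1-\gamma}$ with $\gamma>0$ suffices for the single application at \eqref{Bound33}, and your outline visibly produces such a $\gamma$.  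The only places requiring care are the ones you flag: keeping the Lebesgue exponents inside the window where Lemma~\ref{EquiSobolev} and the Littlewood--Paley calculus of \cite{KMVZZ2018} apply (in particular $10<q_0$, which is exactly where the threshold $a>-\tfrac14+\tfrac{1}{25}$ comes from), and verifying that the diluted smoothing gain after interpolation is still strictly positive so the dyadic sum over $N>N_0$ converges.
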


Finally, we have the following global-in-time Strichartz estimates.
\begin{lemma}[Strichartz estimates, \cite{BPSTZ}]\label{STRICH}
Fix $a>-\frac{1}{4}$. Then the solution $u$ of $(i\partial_{t}-\L_{a}) u=F$ on an interval $I\ni t_{0}$
obeys
\[\| u  \|_{L_{t}^{q}L^{r}_{x}(I\times \R^{3})} \lesssim \| u(t_{0})\|_{L^{2}_{x}(\R^{3})}+ \| F\|_{L_{t}^{\tilde{q}'}L^{\tilde{r}'}_{x}(I\times \R^{3})}, \]
where $2\leq \tilde{q},\tilde{r}\leq\infty$ with 
$\frac{2}{q}+\frac{3}{r}=\frac{2}{\tilde{q}}+\frac{3}{\tilde{r}}=\frac{3}{2}$ and $(q, \tilde{q})\neq (2,2)$.
\end{lemma}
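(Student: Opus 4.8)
The plan is to set $U(t):=e^{-it\L_a}$ and, via Duhamel's formula, reduce the claim to the homogeneous estimate $\|U(t)f\|_{L^q_tL^r_x}\lesssim\|f\|_{L^2_x}$ together with the inhomogeneous estimate $\big\|\int_{t_0}^{t}U(t-s)F(s)\,ds\big\|_{L^q_tL^r_x}\lesssim\|F\|_{L^{\tilde q'}_tL^{\tilde r'}_x}$, and then to obtain both from the abstract theorem of Keel and Tao. For that it suffices to verify two properties of $U$: the energy bound $\|U(t)f\|_{L^2_x}\lesssim\|f\|_{L^2_x}$, and an untruncated dispersive bound $\|U(t)U(s)^{\ast}g\|_{L^\infty_x}\lesssim|t-s|^{-3/2}\|g\|_{L^1_x}$. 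The energy bound is immediate: for $a>-\tfrac14$ the sharp Hardy inequality makes $\L_a$ a nonnegative self-adjoint operator, so $U(t)$ is unitary on $L^2_x$ by the spectral theorem. Granting the dispersive bound, the Keel--Tao machinery yields the homogeneous estimate for every admissible pair, and the inhomogeneous estimate for every pair of admissible exponents other than the diagonal double endpoint $(q,\tilde q)=(2,2)$ --- which is exactly why that case is excluded from the statement; for non-endpoint pairs one can alternatively deduce the inhomogeneous estimate from the homogeneous one through the Christ--Kiselev lemma.

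The real content will then be the dispersive bound for $U(t)$. When $a\geq0$ one route is to combine Gaussian upper bounds for the heat kernel $e^{-t\L_a}$ (available in this regime by domination) with a complex-time analytic-interpolation argument in the spirit of Stein, passing from $e^{-z\L_a}$ with $\RE z>0$ to the boundary value $z\to it$. When $-\tfrac14<a<0$ the pointwise dispersive estimate fails near the origin, so I would instead exploit the spherical symmetry of $\L_a$: decomposing into spherical harmonics, on the angular sector of degree $\ell$ the operator $\L_a$ is unitarily equivalent to the one-dimensional Bessel operator $-\partial_r^2+(\nu_\ell^2-\tfrac14)r^{-2}$ on $L^2((0,\infty))$, where $\nu_\ell=[(\ell+\tfrac12)^2+a]^{1/2}$; via the Hankel transform the corresponding propagator has kernel $\sqrt{rs}\int_0^\infty e^{-it\lambda^2}J_{\nu_\ell}(r\lambda)J_{\nu_\ell}(s\lambda)\,\lambda\,d\lambda$. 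The key observation is that $a>-\tfrac14$ forces $\nu_0=\sqrt{\tfrac14+a}>0$, hence $\nu_\ell\geq\nu_0>0$ for every $\ell$; this uniform lower bound makes it possible to estimate these oscillatory Bessel integrals with constants independent of $\ell$ (using bounds on $J_\nu$ uniform for $\nu\geq\nu_0$), and then to sum the resulting sector-by-sector Strichartz bounds using orthogonality of the spherical harmonics. The same angular scheme of course also covers $a\geq0$, where everything is easier since then $\nu_\ell\geq\ell+\tfrac12$.

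I expect the crux to be precisely this uniform-in-$\ell$ analysis of the Bessel propagators in the regime $a<0$: one must track carefully the small- and large-argument asymptotics of $J_{\nu_\ell}$, check that the degeneration as $\nu_0\downarrow0$ does not destroy the decay required for the full admissible range, and control the summation over $\ell$. This is carried out in detail in \cite{BPSTZ}; since we only need the conclusion, in the fixed dimension $d=3$ and with the standard admissible relation $\tfrac2q+\tfrac3r=\tfrac32$, in the paper we simply cite that reference.
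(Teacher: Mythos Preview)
Your proposal is correct and matches the paper's treatment: the paper does not prove this lemma but simply imports it from \cite{BPSTZ}, and your sketch accurately outlines the argument carried out there (Keel--Tao reduction to a dispersive estimate, obtained via spherical-harmonic decomposition and uniform Bessel bounds exploiting $a>-\tfrac14$). Your closing sentence already says exactly this.
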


Throughout the paper we use the notation:
\[
S^{s}_{a}(I)=L^{2}_{t}H^{s,6}_{a}\cap L^{\infty}_{t}H^{s}_{a}(I\times\R^{3})
\quad \text{and}\quad 
\dot{S}^{s}_{a}(I)=L^{2}_{t}\dot{H}^{s,6}_{a}\cap L^{\infty}_{t}\dot{H}^{s}_{a}(I\times\R^{3}).
\]

\subsection{Global well-posedness and stability}\label{Sec:smalldata}

In this section we present the well-posedness theory for \eqref{NLS} in the space $H_a^1$.  First, we have the following global well-posedness result:

\begin{theorem}[Global well-posedness]\label{Th1}
Given $a>-\frac{1}{4}+\frac{1}{25}$ and $u_{0}\in H_{a}^{1}(\R^{3})$, the corresponding solution $u\in C_t H_x^1$ of \eqref{NLS} exists globally in time.  Moreover, we have the conservation of energy and mass, i.e. 
\[
E_{a}(u(t))\equiv E_{a}(u_{0})\qtq{and}  M(u(t))\equiv M(u_{0})\quad \text{for all $t\in \R$.}
\]
\end{theorem}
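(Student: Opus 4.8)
The plan is to establish global well-posedness for \eqref{NLS} in $H_a^1$ by combining a local well-posedness statement with an a priori bound on the $H_a^1$ norm coming from the conservation laws. First I would prove local well-posedness via a standard contraction mapping argument: using the Strichartz estimates of Lemma~\ref{STRICH} together with the equivalence of Sobolev norms (Lemma~\ref{EquiSobolev}, valid precisely because $a>-\tfrac14+\tfrac1{25}$), one sets up the Duhamel map on a ball in the space $S_a^1(I)$ for a short interval $I$. The nonlinearity $F(u)=-|u|^2u+|u|^4u$ is handled term by term: the quintic term is energy-critical in $d=3$, so it is estimated using the full strength of the Strichartz/Sobolev machinery (this is exactly the type of estimate carried out in \cite{KillipMiaVisanZhangZheng} for the quintic equation with inverse-square potential), while the cubic term is energy-subcritical and hence easier. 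A fractional product rule / chain rule adapted to $\L_a$ (available in this range of $a$ by Lemma~\ref{EquiSobolev} and the Littlewood--Paley theory) lets one place derivatives on the nonlinear terms. This yields existence, uniqueness in a suitable Strichartz space, and continuous dependence on a maximal interval of existence $(T_-,T_+)$.

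Next I would verify the conservation laws. For sufficiently regular data these follow by direct differentiation of $M(u(t))$ and $E_a(u(t))$ in time, using the equation and integration by parts; the general $H_a^1$ case follows by approximating $u_0$ by smooth data and using the continuous dependence from the local theory. The key point here is that the quadratic form $\langle \L_a u, u\rangle = \|u\|_{\dot H_a^1}^2$ is the natural object preserved by the flow, and $\|\nabla u\|_{L^2}^2 + a\||x|^{-1}u\|_{L^2}^2$ differs from it only by the domain considerations already built into the Friedrichs extension.

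With the conservation laws in hand, the remaining step is to upgrade the local solution to a global one, i.e. to rule out finite-time blowup. This is where the structure of the cubic-quintic nonlinearity is essential and is, I expect, the main (though still routine) obstacle. The defocusing quintic term contributes $+\tfrac16\|u\|_{L^6}^6 \geq 0$ to the energy, and by \eqref{equiNorms} we have $\|u\|_{\dot H_a^1}^2 \sim \|u\|_{\dot H^1}^2$; thus from $E_a(u_0) = \tfrac12\|u\|_{\dot H_a^1}^2 + \tfrac16\|u\|_{L^6}^6 - \tfrac14\|u\|_{L^4}^4$ one needs only to absorb the focusing quartic term. By Gagliardo--Nirenberg (or H\"older interpolation between $L^2$ and $L^6$, using conservation of mass), $\|u(t)\|_{L^4}^4 \lesssim \|u(t)\|_{L^2}\,\|u(t)\|_{L^6}^3 \leq \tfrac{1}{100}\|u(t)\|_{L^6}^6 + C\,M(u_0)^2$, so the focusing term is controlled by a small multiple of the (good, conserved-energy) sextic term plus a mass-dependent constant. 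This gives a uniform-in-time bound $\|u(t)\|_{\dot H_a^1}^2 \lesssim E_a(u_0) + M(u_0)^2$, which combined with conservation of mass yields a uniform bound on $\|u(t)\|_{H_a^1}$. A standard continuation argument then forces $T_\pm = \pm\infty$. Note that no smallness or mass/energy restriction is needed for this step — it works for all $H_a^1$ data — which is consistent with the statement of Theorem~\ref{Th1}; the restriction to $\K_a$ enters only later, for scattering.
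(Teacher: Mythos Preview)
Your local theory and a priori bound are fine (in fact your interpolation bound $\|u\|_{L^4}^4 \lesssim \|u\|_{L^2}\|u\|_{L^6}^3$ followed by Young's inequality is essentially the same as the paper's pointwise bound $\tfrac14|u|^4 \le \tfrac38|u|^2 + \tfrac16|u|^6$, which yields $\tfrac12\|u(t)\|_{\dot H_a^1}^2 \le E_a(u_0)+\tfrac38 M(u_0)$). The gap is in the last sentence: ``A standard continuation argument then forces $T_\pm=\pm\infty$.'' This does not work here, because the quintic nonlinearity is energy-critical. For critical problems the local existence time obtained by contraction depends on the \emph{profile} of the data (through a smallness condition on a linear Strichartz norm such as $\|e^{-it\L_a}u_0\|_{L^{10}_{t,x}}$), not merely on $\|u_0\|_{H_a^1}$. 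Consequently a uniform-in-time bound on $\|u(t)\|_{H_a^1}$ does not by itself rule out the scenario in which the local lifespans shrink to zero along a sequence of times. The correct blowup criterion is divergence of a critical space-time norm like $\|u\|_{L^{10}_{t,x}}$, and controlling that is precisely the hard part.

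The paper's route (following Zhang \cite{Zhang}) addresses exactly this point: it takes as a black box the global well-posedness with $L^{10}_{t,x}$ bounds for the \emph{pure defocusing quintic} equation with inverse-square potential (Theorem~\ref{CriticalWP}, from \cite{KillipMiaVisanZhangZheng}), and then uses a stability/perturbation argument (``good local well-posedness'') to transfer those bounds to the cubic-quintic equation, treating the cubic term as a subcritical perturbation. The a priori $H^1$ bound then allows one to iterate this perturbative step globally. So the missing ingredient in your outline is precisely the appeal to the deep quintic global result plus stability; a direct contraction-plus-continuation argument is insufficient at the critical regularity.
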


The corresponding result for the standard cubic-quintic NLS may be found in \cite{Zhang}.  The ingredients needed there are:
\begin{itemize}
\item[(i)] global well-posedness for the defocusing $3d$ quintic NLS
\begin{equation}\label{3dquintic}
(i\partial_t + \Delta)u = |u|^4 u
\end{equation}
(cf. \cite{KiiVisan2008, Bourgain1999, TaoKell2008}), 
\item[(ii)] a stability-type result (referred to as `good local well-posedness' in \cite{Zhang}), and 
\item[(iii)] \emph{a priori} $\dot H^1$ bounds.  
\end{itemize} 
Theorem~\ref{Th1} follows from the fact that we have all of these ingredients in the present setting as well.  In particular, the analogue of (i) was established in \cite{KillipMiaVisanZhangZheng}.  We state the result as follows:
\begin{theorem}[Scattering for the quintic NLS with inverse-square potential]\label{CriticalWP}
\text{ } \\Given $a>-\frac{1}{4}+\frac{1}{25}$ and $u_{0}\in \dot{H}_{a}^{1}(\R^{3})$ there exists a unique global solution $u\in C(\R, \dot{H}_{x}^{1}(\R^{3}))$ to
\begin{equation}\label{CriticalNLS}
\begin{cases} 
(i\partial_{t}-\L_{a})u=|u|^{4}u,\quad (t,x)\in \R\times\R^{3},\\
u(0)=u_{0}\in \dot{H}_{x}^{1}(\R^{3}).
\end{cases} 
\end{equation}
Furthermore, we have the following space-time bound
\[
\int_{\R}\int_{\R^{3}}|u(t,x)|^{10}\,dx\,dt\leq C({\|u_{0}\|_{\dot{H}_{x}^{1}}}).
\]
\end{theorem}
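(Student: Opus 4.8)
The plan is to follow the concentration-compactness/rigidity strategy of Kenig--Merle, in the form adapted to defocusing energy-critical problems by Killip--Visan, and to combine the harmonic analysis adapted to $\L_a$ (the Strichartz estimates of Lemma~\ref{STRICH}, the Sobolev-norm equivalence of Lemma~\ref{EquiSobolev}, and the Littlewood--Paley and square-function machinery above) with a Morawetz inequality. Because the nonlinearity $|u|^4u$ is energy-critical and defocusing, there is no mass/energy threshold: the aim is the stated global bound on $\|u\|_{L^{10}_{t,x}}$ for \emph{every} $u_0\in\dot H^1_a$. First I would develop the perturbative theory. Using Lemma~\ref{STRICH} together with the norm equivalence of Lemma~\ref{EquiSobolev} on the admissible range $\tfrac65\le p\le\tfrac{30}{13}$ (which is precisely why $a>-\tfrac14+\tfrac1{25}$ is imposed), one gets local well-posedness in $\dot H^1_a$, small-data global existence and scattering, and a stability/perturbation lemma: any approximate solution of \eqref{CriticalNLS} with small error and finite space-time norm is shadowed by a true solution of comparable norm. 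One then sets $L(E)$ to be the best bound on $\|u\|_{L^{10}_{t,x}}$ over solutions with $\|u_0\|_{\dot H^1_a}^2\le E$; if the theorem fails, $E_c:=\sup\{E:L(E)<\infty\}$ is finite and positive.

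The heart of the argument is the construction and preclusion of a \emph{minimal blowup solution}. I would establish a linear profile decomposition adapted to $e^{-it\L_a}$: a bounded sequence in $\dot H^1_a$ decomposes, up to a remainder with asymptotically vanishing Strichartz norm, into a superposition of profiles carried by parameters $(\lambda_n,x_n,t_n)$ where $\lambda_n$ is a scaling parameter (a symmetry of $\L_a$) and $x_n$ a spatial center (\emph{not} a symmetry). The profiles fall into three regimes, exactly the trichotomy flagged in the introduction for the full problem: fixed-scale profiles escaping to spatial infinity, whose nonlinear evolution is approximated — after a change of variables and through the stability lemma — by solutions of the potential-free quintic equation \eqref{3dquintic} (for which global well-posedness and scattering is known); profiles concentrating at the origin, handled directly by the $\L_a$-flow; and profiles concentrating away from the origin relative to their own scale, again approximated by \eqref{3dquintic}. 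Inserting these nonlinear profiles into the stability lemma, together with the energy and Strichartz decoupling of the profiles, shows that a failure of scattering forces exactly one profile to survive, yielding a nonzero global solution $u$ whose orbit is precompact in $\dot H^1_a$ modulo scaling: there is $N:\R\to(0,\infty)$ with $\{N(t)^{-1/2}u(t,\cdot/N(t))\}$ precompact.

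Finally I would preclude such a solution. The structural point, already emphasized in the introduction, is that the inverse-square potential shares the scaling of $-\Delta$, so the virial computation with the weight $|x|$ (or the interaction-Morawetz weight) produces a potential contribution that is scaling-consistent with the kinetic and nonlinear terms and, thanks to the sharp Hardy inequality together with $a>-\tfrac14$, can be handled: when $a\ge0$ it has a favorable sign, and when $a<0$ it is dominated by the coercive part of the $\dot H^1_a$ form. One thus derives a Morawetz-type spacetime bound controlled by the energy. Combined with the precompactness of the orbit modulo scaling and a standard case analysis on the behavior of $N(t)$ (ruling out the frequency-cascade scenario by conservation laws and the self-similar scenario by the Morawetz bound, via the usual arguments), this forces $u\equiv0$, contradicting $E_c>0$.

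The main obstacle is the construction of the nonlinear profiles in the regimes where the translation parameter diverges, since translation is not a symmetry of \eqref{CriticalNLS}: one must quantify how a profile centered far from the origin "sees" a vanishing potential, embed the corresponding solution of the free quintic equation as an approximate solution of \eqref{CriticalNLS} with controlled error, and close via the stability lemma — this is where the local smoothing estimate of Lemma~\ref{LocalSmoo} and the mismatch estimates between $\L_a$- and $(-\Delta)$-adapted Littlewood--Paley projections are used. A secondary difficulty is making the Morawetz/virial estimate quantitatively coercive in the attractive case $a<0$, where the potential term opposes the positive terms and the explicit lower bound $a>-\tfrac14+\tfrac1{25}$ (inherited from Lemma~\ref{EquiSobolev}) is needed.
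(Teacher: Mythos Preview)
The paper does not prove this theorem; it is imported wholesale from \cite{KillipMiaVisanZhangZheng} and used as a black box. Your outline is a faithful sketch of the concentration-compactness/rigidity argument carried out in that reference, so in that sense your proposal is correct and matches the ``paper's own proof'' (which is simply a citation).

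Two small corrections to your sketch, in case you pursue it. First, for the pure quintic problem \eqref{CriticalNLS} the scaling $u(t,x)\mapsto \lambda^{1/2}u(\lambda^2 t,\lambda x)$ \emph{is} a symmetry of both $\L_a$ and the nonlinearity, so there is no trichotomy of profile regimes: after normalizing the scale one has only the dichotomy $x_n\equiv 0$ (handled directly by the $\L_a$-flow) versus $|x_n|\to\infty$ (approximated by solutions of \eqref{3dquintic} via stability). The three-scenario picture you describe belongs to the cubic-quintic problem treated later in this paper, where scaling is genuinely broken by the cubic term. Second, the restriction $a>-\tfrac14+\tfrac1{25}$ enters only through the local theory and the Sobolev-norm equivalences of Lemma~\ref{EquiSobolev}; the localized virial/Morawetz identity itself is coercive for all $a>-\tfrac14$, since the potential contribution combines with the kinetic term to produce exactly $\|u\|_{\dot H^1_a}^2$, which is positive by the sharp Hardy inequality.
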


\begin{remark}\label{persistenceCritical}
By Theorem \ref{CriticalWP} and persistence of regularity, one can show that  the global solution $u$ in Theorem \ref{CriticalWP} satisfies
\[
\|\L_{a}^{\frac{1}{2}} u\|_{L_{t}^{q}L^{r}_{x}(\R\times\R^{3})}
\leq C(\|\L_{a}^{\frac{1}{2}} u_{0}\|_{{L}^{2}}).
\]
For all admissible pairs $\frac{2}{q}+\frac{3}{r}=\frac{3}{2}$ with $2<q\leq \infty$. Moreover, if $u(0)=u_{0}\in {H}_{x}^{1}(\R^{3})$,
the we also have
\[
\|u\|_{L_{t}^{q}L^{r}_{x}(\R\times\R^{3})}
\leq C(\|u_{0}\|_{{L}^{2}}).
\]
\end{remark}

 Given Theorem~\ref{CriticalWP} and the Strichartz estimates adapted to the inverse-square potential (cf. Lemma~\ref{STRICH}), the arguments of \cite{Zhang} apply equally well to establish the analogue of (ii) in the setting of \eqref{NLS}.  Finally, the kinetic energy control follows as in \cite{Zhang} as well.  In particular, one observes that by Young's inequality,
\[
\tfrac{1}{4}|u|^4 \leq \tfrac{3}{8}|u|^2 + \tfrac{1}{6}|u|^6, 
\qtq{so that}
\tfrac12 \|u(t)\|_{\dot H^1}^2 \leq E_a(u) + \tfrac{3}{8}M(u).
\]
uniformly over the lifespan of $u$, yielding (iii).


In addition to global well-posedness in $H^1$, we will need a following results establishing scattering in $H^1$ for sufficiently small initial data, along with a persistence of regularity result and a stability result.  All of these are analogues of results in \cite[Section~6]{KillipOhPoVi2017}.  As the proofs rely primarily on Strichartz estimates, which are readily available in the setting of the inverse-square potential, we omit them here. 

\begin{proposition}[Small data scattering]\label{SDC}
Let $a>-\frac{1}{4}+\frac{1}{25}$ and $u_{0}\in H_{a}^{1}(\R^{3})$. There exists $\delta>0$ such that if $\|u_0\|_{H_a^1}<\delta$, then the corresponding solution $u$ of \eqref{NLS} is global and scatters, with
\[
\|u\|_{L^{10}_{t,x}(\R\times\R^{3})} \lesssim \|\sqrt{\L_{a}}u_{0}\|_{L^{2}(\R^{3})}.
\]
\end{proposition}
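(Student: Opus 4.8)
The plan is to run the standard Strichartz/contraction-mapping argument, exactly as in the small-data theory for the energy-critical and mass-supercritical NLS, but phrased entirely in terms of the operator $\L_a$ so that the inverse-square potential never appears explicitly. The key point is that all the required harmonic-analytic inputs are available: the global Strichartz estimates of Lemma~\ref{STRICH}, the equivalence of Sobolev norms $\|\L_a^{1/2}f\|_{L^p_x}\sim\|(-\Delta)^{1/2}f\|_{L^p_x}$ for $\tfrac65\le p\le\tfrac{30}{13}$ from Lemma~\ref{EquiSobolev} (which holds precisely because $a>-\tfrac14+\tfrac1{25}$), and the fractional product rule / chain rule, which transfer to $\L_a$ via that norm equivalence. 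In particular one has the Leibniz-type bound $\|\L_a^{1/2}(|u|^2u)\|_{L^{q'}_tL^{r'}_x}\lesssim \|u\|^2_{L^{a_1}_tL^{b_1}_x}\|\L_a^{1/2}u\|_{L^{a_2}_tL^{b_2}_x}$ and similarly for $|u|^4u$ with four factors, using exponents that are Strichartz-admissible after interpolation with Sobolev embedding.

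First I would set up the iteration space. Let $X$ be the ball of radius $2C\|\sqrt{\L_a}u_0\|_{L^2}$ (with $C$ the Strichartz constant) in the norm $\|u\|_{\dot S^1_a(\R)}$, and consider the Duhamel map
\[
\Phi(u)(t)=e^{-it\L_a}u_0 - i\int_0^t e^{-i(t-s)\L_a}\bigl(-|u|^2u+|u|^4u\bigr)(s)\,ds.
\]
By the inhomogeneous Strichartz estimate applied at the level of $\sqrt{\L_a}$,
\[
\|\Phi(u)\|_{\dot S^1_a}\le C\|\sqrt{\L_a}u_0\|_{L^2}
+C\bigl\|\sqrt{\L_a}\bigl(|u|^2u\bigr)\bigr\|_{N(\R)}
+C\bigl\|\sqrt{\L_a}\bigl(|u|^4u\bigr)\bigr\|_{N(\R)},
\]
where $N(\R)$ denotes a suitable dual-Strichartz space. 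Using the product rule, the equivalence of Sobolev norms, Sobolev embedding, and interpolation, I would bound the cubic term by $\lesssim\|u\|^3_{\dot S^1_a}$ and the quintic term by $\lesssim\|u\|^5_{\dot S^1_a}$ (for the quintic term no powers of the length of the interval are needed since it is energy-critical; for the cubic term one either allows a small power of $\|u\|_{L^{10}_{t,x}}$ as in Lemma~\ref{LocalSmoo}, or interpolates to land on purely critical exponents — the global-in-time Strichartz estimate of Lemma~\ref{STRICH} is what makes the global statement possible). This yields $\|\Phi(u)\|_{\dot S^1_a}\le C\|\sqrt{\L_a}u_0\|_{L^2}+C'(\|u\|^3_{\dot S^1_a}+\|u\|^5_{\dot S^1_a})$, and a parallel difference estimate $\|\Phi(u)-\Phi(v)\|_{\dot S^1_a}\le C'(\|u\|^2_{\dot S^1_a}+\|v\|^2_{\dot S^1_a}+\|u\|^4_{\dot S^1_a}+\|v\|^4_{\dot S^1_a})\|u-v\|_{\dot S^1_a}$. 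Choosing $\delta$ small enough that the cubic and quintic corrections are controlled by one quarter of the radius makes $\Phi$ a contraction on $X$, producing a unique global solution with $\|u\|_{\dot S^1_a}\lesssim\|\sqrt{\L_a}u_0\|_{L^2}$; in particular, by Sobolev embedding $\dot H^{1,30/13}_a\hookrightarrow L^{10}_x$ together with the admissible pair $(q,r)=(10,30/13)$, we get $\|u\|_{L^{10}_{t,x}}\lesssim\|\sqrt{\L_a}u_0\|_{L^2}$. Scattering then follows in the usual way: $e^{it\L_a}u(t)$ is Cauchy in $H^1_a$ as $t\to\pm\infty$ because the Duhamel integral tail is controlled by the finite $\dot S^1_a$ and small-data $L^2$ norms, and $H^1_a=H^1$ by \eqref{equiNorms}.

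The main obstacle — though it is a technical one rather than a conceptual one — is the bookkeeping of exponents: one must choose Strichartz pairs and Sobolev-embedding exponents so that every Lebesgue exponent on which $\L_a^{1/2}$ acts lies in the range $[\tfrac65,\tfrac{30}{13}]$ where Lemma~\ref{EquiSobolev} gives the norm equivalence, while simultaneously satisfying the admissibility condition $\tfrac2q+\tfrac3r=\tfrac32$ and closing the cubic estimate without a negative power of $|I|$. This is exactly the constraint that forces the hypothesis $a>-\tfrac14+\tfrac1{25}$, and it is the reason the proof is not merely a verbatim copy of the potential-free case. Since this is precisely the analogue of \cite[Section~6]{KillipOhPoVi2017} with the only change being the systematic replacement of $-\Delta$ by $\L_a$ and of the classical Sobolev calculus by its $\L_a$-adapted version from \cite{KMVZZ2018}, I would present the argument briefly and refer the reader there for the routine exponent-chasing.
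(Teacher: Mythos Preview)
Your proposal is essentially correct and matches the paper's approach: the paper in fact omits the proof entirely, stating only that it is the analogue of \cite[Section~6]{KillipOhPoVi2017} and relies on the Strichartz estimates for $e^{-it\L_a}$. Your outline supplies precisely that argument. One small correction: the cubic nonlinearity is energy-subcritical, so it cannot be bounded purely by $\|u\|_{\dot S^1_a}^3$ on an infinite time interval; the natural estimate (e.g.\ placing $\sqrt{\L_a}(|u|^2u)$ in $L_t^{5/3}L_x^{30/23}$) produces a factor of $\|u\|_{L_t^{5/2}L_x^{30/7}}$, which lives at $L^2$-regularity, so the contraction should be run in $S^1_a=S^0_a\cap\dot S^1_a$ using the full smallness $\|u_0\|_{H^1_a}<\delta$ rather than in $\dot S^1_a$ alone---you effectively acknowledge this in your final sentence. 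The reference to Lemma~\ref{LocalSmoo} is a red herring; local smoothing plays no role here.
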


\begin{remark}[Persistence of regularity]\label{PRegularity}
Suppose that $u: \R\times \R^{3}\rightarrow \C$ is a solution to \eqref{NLS} such that 
$S:=\|u\|_{L^{10}_{t,x}(\R\times\R^{3})}<\infty$. 
Then for $t_{0}\in \R$ we have
\begin{align*}
\|u\|_{S_{a}^{0}(\R)}&\leq C(S, M(u_{0}))\|u(t_{0})\|_{L^{2}_{x}(\R^{3})}, \\
\|u\|_{L^{10}_{t}\dot{H}_{a}^{1,\frac{30}{13}}(\R\times\R^3)}
&\leq C(S, M(u_{0}))\|u(t_{0})\|_{\dot{H}^{1}_{a}(\R^{3})}.
\end{align*}

\end{remark}

\begin{lemma}[Stability]\label{StabilityNLS}
Fix $a>-\frac{1}{4}+\frac{1}{25}$. Let $I\subset \R$ be a time interval containing $t_{0}$ and  let $\tilde{u}$ satisfy
\[ 
(i\partial_{t}-\L_{a}) \tilde{u}=-|\tilde{u}|^{2}\tilde{u}+|\tilde{u}|^{4}\tilde{u}+e, \quad 
\tilde{u}(t_{0})=\tilde{u}_{0}
\]
on $I\times\R^3$ for some $e:I\times\R^{3}\rightarrow \C$. Assume the conditions
\[
\| \tilde{u}  \|_{L_{t}^{\infty}H^{1}_{a}(I\times\R^{3})}\leq E\qtq{and} 
	\| \tilde{u}  \|_{L_{t,x}^{10}(I\times\R^{3})}\leq L
\]
for some $E$, $L>0$. Let $t_{0}\in I$ and $u_{0}\in H_{a}^{1}(\R^{3})$ such that $\|u_{0}\|_{L_{x}^{2}}\leq M$
for some  positive constant $M$.  
Assume also the smallness conditions 
\[
	\|u_{0}-\tilde{u}_{0} \|_{\dot{H}^{1}_{a}}\leq \epsilon\qtq{and}	\|\sqrt{\L_{a}} e  \|_{N(I)}\leq \epsilon,
\]
for some $0<\epsilon<\epsilon_{0}=\epsilon_{0}(\mbox{A,L,M})>0$, where
\[
N(I)=L^{1}_{t}L^{2}_{x}(I\times \R^{3})+L^{2}_{t}L^{\frac{6}{5}}_{x}(I\times \R^{3})+L^{\frac{5}{3}}_{t}L^{\frac{30}{23}}_{x}(I\times \R^{3}).
\]
Then there exists a unique global solution $u$ to Cauchy problem \eqref{NLS} with initial data $u_{0}$ at the time $t=t_{0}$ satisfying
\[
	\|u-\tilde{u}\|_{\dot{S}_{a}^{1}(I)}\leq C(E,L,M)\epsilon\qtq{and}	\|u\|_{\dot{S}_{a}^{1}(I)}\leq C(E,L,M).
\]
Moreover,
\[
\text{if}\quad\|u_{0}-\tilde{u}_{0} \|_{\dot{H}^{\frac{3}{5}}_{x}}+\||\nabla|^{\frac{3}{5}} e  \|_{N(I)}<\epsilon,\qtq{then}
\|u-\tilde{u}\|_{\dot{S}_{a}^{\frac{3}{5}}(I)}\leq C(E,L,M)\epsilon.
\]
\end{lemma}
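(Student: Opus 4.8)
\emph{Setup and reduction.} The plan is to run the standard stability scheme for energy-critical-type NLS, adapted to the operator $\L_a$. Write $w:=u-\tilde u$; it solves $(i\d_t-\L_a)w = F(\tilde u+w)-F(\tilde u)-e$ with $w(t_0)=u_0-\tilde u_0$, where $F(z)=-|z|^{2}z+|z|^{4}z$. Since the a priori bound $L$ on $\|\tilde u\|_{L^{10}_{t,x}(I)}$ need not be small, I would first partition $I=I_1\cup\dots\cup I_J$ into finitely many consecutive intervals, with $J=J(L,E,M)$, chosen so that on each $I_j$ both $\|\tilde u\|_{L^{10}_{t,x}(I_j)}$ and the relevant energy-\emph{subcritical} norm of $\tilde u$ (finite on $I$ by interpolating the hypotheses with Remark~\ref{PRegularity}) are below a threshold $\eta=\eta(E,M)$ to be fixed later. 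One then argues inductively on $j$: assuming control of $\|w\|_{\dot H^1_a}$ at the left endpoint of $I_j$, one proves it on all of $I_j$, and in particular at the right endpoint. Each step can only enlarge the constant by a fixed multiplicative factor depending on $(E,M)$, so after $J$ steps one obtains the claimed bound on $I$, provided $\epsilon_0$ is small enough relative to the accumulated factor. Existence and globality of $u$ are immediate from Theorem~\ref{Th1} (as $u_0\in H^1_a$); the real content is the quantitative bounds.

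\emph{Short-time estimate.} On a fixed $I_j$, apply $\sqrt{\L_a}$ to the difference equation and use the Strichartz estimates of Lemma~\ref{STRICH} (available for every $L^2$-admissible pair, in particular the pairs $(\infty,2),(2,6),(\tfrac52,\tfrac{30}{7})$ dual to $N(I_j)$ and the exotic pair $(10,\tfrac{30}{13})$) together with $\|\sqrt{\L_a}\,\cdot\,\|_{L^p}\sim\|\nabla\,\cdot\,\|_{L^p}$ from Lemma~\ref{EquiSobolev} for $\tfrac65\le p\le\tfrac{30}{13}$, to get
$\|w\|_{\dot S^1_a(I_j)}+\|\sqrt{\L_a}w\|_{L^{10}_tL^{30/13}_x(I_j)}\lesssim \|w(t_j)\|_{\dot H^1_a}+\|\sqrt{\L_a}[F(\tilde u+w)-F(\tilde u)]\|_{N(I_j)}+\|\sqrt{\L_a}e\|_{N(I_j)}$. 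The nonlinear difference is handled via the pointwise bounds $|F(\tilde u+w)-F(\tilde u)|\lesssim |w|(|\tilde u|+|w|)^{2}+|w|(|\tilde u|+|w|)^{4}$ and their Leibniz analogues for $\nabla$, followed by H\"older in $(t,x)$ and interpolation of the resulting norms among $\|\cdot\|_{L^{10}_{t,x}}$, $\|\cdot\|_{L^\infty_tH^1_a}$, and the Strichartz norms. The energy-critical quintic contribution is placed in $L^{2}_tL^{6/5}_x$ and carries four powers of an $L^{10}_{t,x}(I_j)$-type norm (hence smallness from the partition); the subcritical cubic contribution is placed in $L^{5/3}_tL^{30/23}_x$ (or $L^1_tL^2_x$) and is small on $I_j$ because the matching norm of $\tilde u$ was made small there. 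A continuity/bootstrap argument (removing the a priori hypothesis $\|w\|_{\dot S^1_a(I_j)}\le1$ in the usual way) then gives $\|w\|_{\dot S^1_a(I_j)}\lesssim_{E,M}\|w(t_j)\|_{\dot H^1_a}+\epsilon$, and via $u=\tilde u+w$ with Remark~\ref{PRegularity} for $\tilde u$ also $\|u\|_{\dot S^1_a(I_j)}\lesssim_{E,L,M}1$. Summing over $j$ yields the global conclusions.

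\emph{The $\dot H^{3/5}$ statement.} Under the extra smallness of $\|u_0-\tilde u_0\|_{\dot H^{3/5}_x}+\||\nabla|^{3/5}e\|_{N(I)}$, I would rerun the same subinterval induction at regularity $s=\tfrac35$: apply $(-\Delta)^{3/10}$ (equivalently $\L_a^{3/10}$, by Lemma~\ref{EquiSobolev} on the relevant exponents) and the fractional Leibniz rule. The key point is that in $\||\nabla|^{3/5}[F(\tilde u+w)-F(\tilde u)]\|_{N(I_j)}$ every factor except a single $|\nabla|^{3/5}$-derivative of $w$ is already controlled by the $\dot S^1_a$ bounds obtained above, so this estimate is \emph{linear} in the new unknown $\|w\|_{\dot S^{3/5}_a(I_j)}$ and closes immediately, giving $\|w\|_{\dot S^{3/5}_a(I)}\lesssim_{E,L,M}\epsilon$.

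\emph{Main obstacle.} I expect the delicate point to be the nonlinear bookkeeping for the energy-critical quintic term: one must simultaneously localize the non-perturbative factor $L$ into the finite partition so that all constants depend only on $(E,L,M)$, and keep every Lebesgue exponent appearing under a $\sqrt{\L_a}$ or $|\nabla|^{3/5}$ inside the narrow window $[\tfrac65,\tfrac{30}{13}]$ where the Sobolev-norm equivalence of Lemma~\ref{EquiSobolev} holds (this window, together with the restriction $a>-\tfrac14+\tfrac1{25}$, is exactly why the particular admissible pairs above are forced). Checking that the H\"older exponent count is consistent with all of these constraints at once — and that the cubic term genuinely comes with a spare power of $|I_j|$ (equivalently of $\|\tilde u\|_{L^{10}_{t,x}(I_j)}$) so that it can be absorbed — is the main technical content; the architecture of the iteration is otherwise standard and parallels \cite{KillipOhPoVi2017}.
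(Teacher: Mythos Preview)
Your proposal is correct and matches the paper's approach: the paper does not actually prove Lemma~\ref{StabilityNLS} but simply states that it is an analogue of the corresponding result in \cite[Section~6]{KillipOhPoVi2017}, with the proof omitted because it relies only on Strichartz estimates (Lemma~\ref{STRICH}) and the equivalence of Sobolev spaces (Lemma~\ref{EquiSobolev}), both of which are available for $\L_a$. The architecture you describe --- partitioning $I$ according to the $L^{10}_{t,x}$ norm, running Strichartz and a bootstrap on each piece with the quintic term in $L^2_tL^{6/5}_x$ and the cubic term in $L^{5/3}_tL^{30/23}_x$, then iterating --- is precisely the argument of \cite[Section~6]{KillipOhPoVi2017} transported to the $\L_a$ setting, and your identification of the exponent window $[\tfrac65,\tfrac{30}{13}]$ as the reason for the restriction $a>-\tfrac14+\tfrac1{25}$ is exactly the point.
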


\section{Variational analysis}
\label{sec:VarationalGNI}

\subsection{Sharp Gagliardo--Nirenberg--H\"older inequality}\label{S:SGNH} In this section, we consider the following $\alpha$-Gagliardo-Nirenberg-H\"older inequality:
\begin{equation}\label{InvGNew}
\|f\|^{4}_{L^{4}}\leq 
C_{\alpha,a}\|f\|_{L^{2}}\|f\|^{\frac{3}{1+\alpha}}_{\dot{H}^{1}_{a}}\|f\|^{\frac{3\alpha}{1+\alpha}}_{L^{6}}.
\end{equation}

We prove the following:

\begin{theorem}\label{TheInverGN}
Let $\alpha\in(0,\infty)$ and $a>-\frac{1}{4}$. Define
\begin{equation}\label{GNI}
C^{-1}_{\alpha,a}:=\inf_{ f\in {H}^{1}_{a}\setminus\left\{0\right\}}
\frac{\|f\|_{L^{2}}\| f\|^{\frac{3}{1+\alpha}}_{\dot{H}^{1}_{a}}\|f\|^{\frac{3\alpha}{1+\alpha}}_{L^{6}}}
{\|f\|^{4}_{L^{4}}}.
\end{equation}
Then $C_{\alpha,a}\in (0,\infty)$ and the following statements hold.
\begin{itemize}
\item[(i)] Assume $a\leq0$. Then the infimum \eqref{GNI} is attained by a function 
$Q_{\alpha,a}\in {H}^{1}_{a}\setminus\left\{0\right\}$, which is a non-negative, radial solution of the 
stationary problem
\begin{equation}\label{EllipNLS}
\L_{a}Q_{\alpha,a}+Q_{\alpha,a}^{5}-Q^{3}_{\alpha,a}+\omega Q_{\alpha,a}=0
\end{equation}
for some $\omega\in (0,\tfrac{3\alpha}{16(1+\alpha)})$. Furthermore, $Q_{\alpha,a}$ satisfies the identity 
\[
{\|Q_{\alpha, a}\|^{6}_{L^{6}}}={\alpha}{\|Q_{\alpha, a}\|^{2}_{\dot{H}^{1}_{a}}}.
\]
\item[(ii)] Assume $a>0$. Then $C_{\alpha,a}=C_{0,a}$ but the infimum \eqref{GNI} is never attained.
\end{itemize}
\end{theorem}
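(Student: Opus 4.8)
The plan is to treat the two regimes separately, with most of the work going into part~(i), the attractive/free case $a \le 0$, via the concentration-compactness method, and then to leverage the same machinery together with the scaling structure of the potential to obtain part~(ii).

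First, the positivity and finiteness of $C_{\alpha,a}$: the upper bound $C_{\alpha,a} < \infty$ (i.e., the inequality \eqref{InvGNew} holds) follows by interpolation from the sharp Hardy/Sobolev inequalities together with \eqref{equiNorms}; since $\tfrac{1}{4} + \tfrac{3}{1+\alpha}\cdot\tfrac{1}{6} \cdot ? $ — more precisely, one checks that the exponents $\tfrac13 \cdot \tfrac{2 \cdot 6}{4} $ — scale correctly, which is immediate from the homogeneity of the terms, and then applies H\"older plus Gagliardo--Nirenberg. The lower bound $C_{\alpha,a} > 0$ (equivalently, that the infimum in \eqref{GNI} is strictly positive) is contained in the same inequality read the other way. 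For part~(i), I would take a minimizing sequence $(f_n)$ for \eqref{GNI}, normalized so that $\|f_n\|_{L^2} = \|f_n\|_{\dot H^1_a} = \|f_n\|_{L^6} = 1$ after applying the two-parameter scaling $f \mapsto \lambda f(\mu \cdot)$ that leaves the quotient invariant; the goal is to show $\|f_n\|_{L^4} \to C_{\alpha,a}^{1/4}$. Next I would invoke a profile decomposition adapted to $\L_a$ (the linear profile decomposition in $\dot H^1_a$ from \cite{KMVZZ2018}, or more simply a Lieb-type compactness lemma), whose key feature is that for $a \le 0$ the translation parameters may be taken to either stay bounded or diverge, and diverging translations push mass to spatial infinity where the potential vanishes. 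Using the inverse-square structure one shows that profiles with diverging translation contribute only to the free ($a=0$) constant, which by the comparison $\dot H^1 \sim \dot H^1_a$ and $a \le 0$ cannot beat the $a$-constant; hence a single profile survives, with no translation, and after extracting it one obtains a genuine minimizer $Q_{\alpha,a}$. Nonnegativity follows by replacing $Q$ with $|Q|$ (the $\dot H^1_a$-norm does not increase, again using $a\le 0$ so the potential term is nonpositive — wait, here one uses that $\| |f| \|_{\dot H^1} \le \|f\|_{\dot H^1}$ combined with \eqref{equiNorms} to control $\dot H^1_a$; alternatively invoke that the Friedrichs form is positivity-preserving), and radiality by Schwarz symmetrization, which decreases $\dot H^1_a$ for $a \le 0$ since $|x|^{-2}$ is radially decreasing. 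The Euler--Lagrange equation \eqref{EllipNLS} is then obtained by differentiating the quotient; the sign and range of $\omega$ comes from testing the equation against $Q_{\alpha,a}$ and against $x\cdot\nabla Q_{\alpha,a}$ (Pohozaev/Nehari identities), which simultaneously yield the Pohozaev identity $\|Q_{\alpha,a}\|_{L^6}^6 = \alpha\|Q_{\alpha,a}\|_{\dot H^1_a}^2$ after using the scaling-invariance of the functional to fix the balance between the two scaling parameters.

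For part~(ii), $a > 0$: the inequality $C_{\alpha,a} \le C_{0,a} = C_{0,0}$ — here I should be careful; actually for $a>0$ the defocusing potential makes $\|f\|_{\dot H^1_a} \ge \|f\|_{\dot H^1}$, so the quotient in \eqref{GNI} with $a$ is pointwise $\ge$ the one with $a=0$, giving $C_{\alpha,a} \le C_{\alpha,0}$ immediately. For the reverse inequality $C_{\alpha,a} \ge C_{\alpha,0}$ one uses a translation-to-infinity argument: given a near-optimizer $f$ for the $a=0$ problem, translate it far from the origin, $f_R := f(\cdot - Re_1)$; then $\|f_R\|_{\dot H^1_a}^2 = \|f\|_{\dot H^1}^2 + a\int |x|^{-2}|f(x - Re_1)|^2\,dx \to \|f\|_{\dot H^1}^2$ as $R \to \infty$, while the other three norms are translation-invariant, so the $a$-quotient evaluated at $f_R$ tends to the $a=0$-quotient at $f$. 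Taking the infimum gives $C_{\alpha,a}^{-1} \le C_{\alpha,0}^{-1}$, hence equality. Nonattainment then follows by contradiction: if $Q$ achieved the infimum for $a>0$, then since $a\int|x|^{-2}|Q|^2 > 0$ whenever $Q \not\equiv 0$ (and a minimizer is not identically zero), we would get $\|Q\|_{\dot H^1_a} > \|Q\|_{\dot H^1}$ strictly, so the $a=0$-quotient at $Q$ is strictly smaller than $C_{\alpha,a}^{-1} = C_{\alpha,0}^{-1}$, contradicting the definition of $C_{\alpha,0}$.

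The main obstacle is the profile-decomposition step in part~(i): ruling out dichotomy and showing that no mass escapes to spatial infinity in a way that would make the free constant the relevant one. Concretely, one must show that if a minimizing sequence splits into a bump near the origin plus a bump running off to infinity, the value of the functional degrades — this requires quantifying, via the profile decomposition, that the $L^4$-norm is "subadditive" across well-separated profiles while the constraint norms are additive, and then using strict inequality (coming from $a<0$, which strictly lowers the energy of the near-origin profile relative to $a=0$) to conclude that concentration at a single, origin-centered profile is forced. This is where the sign condition $a \le 0$ is essential and where most of the technical care is needed; the rest of the argument is standard variational calculus. I would also need a separate small remark handling the borderline $a=0$ case of part~(i), where minimizers exist (they are the free cubic-quintic ground states of \cite{KillipOhPoVi2017}) but the "no escape to infinity" dichotomy is vacuous since all translates are equivalent.
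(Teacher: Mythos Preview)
Your treatment of part~(ii) is essentially identical to the paper's: pointwise comparison $\|f\|_{\dot H^1_a}>\|f\|_{\dot H^1}$ for $a>0$ gives $C_{\alpha,a}\le C_{\alpha,0}$, translation to infinity of a free near-optimizer gives the reverse inequality, and the strict Hardy term precludes attainment.

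For part~(i), however, your route via a profile decomposition is correct in spirit but considerably heavier than what the paper does, and it also contains a small slip. The slip: under the two-parameter scaling $f\mapsto \lambda f(\mu\,\cdot)$ the norms $\|f\|_{\dot H^1_a}$ and $\|f\|_{L^6}$ scale identically (both pick up $\lambda\mu^{-1/2}$), so you cannot normalize all three of $L^2$, $\dot H^1_a$, $L^6$ simultaneously; only two are free. The paper normalizes $\|f_n\|_{L^2}=\|f_n\|_{\dot H^1_a}=1$ and gets $L^6$-boundedness from Sobolev.

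The more substantive difference is the compactness step. You propose to run a Lieb/profile decomposition on the minimizing sequence and then argue that profiles escaping to spatial infinity see only the $a=0$ constant, which for $a\le 0$ is no better, thereby ruling out dichotomy. This works, but the dichotomy analysis (your ``main obstacle'') is entirely avoidable. The paper instead applies Schwarz symmetrization to the minimizing sequence \emph{before} passing to a limit: for $a\le 0$ the rearrangement does not increase $\|f\|_{\dot H^1_a}^2$ (P\'olya--Szeg\H{o} for the gradient, Hardy--Littlewood for $\int a|x|^{-2}|f|^2$ with $a\le 0$), and it preserves all $L^p$ norms, so one may assume each $f_n$ is nonnegative, radial, and decreasing. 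Then the compact embedding $H^1_{\mathrm{rad}}(\R^3)\hookrightarrow L^4(\R^3)$ gives strong $L^4$ convergence for free, and weak lower semicontinuity in $\dot H^1_a$ and $L^6$ closes the argument in a few lines. No profile decomposition, no dichotomy to exclude, no separate remark for $a=0$. Your plan buys nothing extra here; it just reproduces the minimizer at higher cost.

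One last remark: you propose to extract the identity $\|Q\|_{L^6}^6=\alpha\|Q\|_{\dot H^1_a}^2$ from Pohozaev-type identities. That is feasible, but the paper obtains it more directly: the Euler--Lagrange equation for the normalized minimizer $f_\ast$ is rescaled via an explicit choice of $(\lambda,\mu)$ to put it in the form \eqref{EllipNLS}, and the identity then drops out of the scaling factors by direct computation.
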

\begin{proof} Sobolev embedding and Lemma~\ref{EquiSobolev} immediately yield $C_{\alpha,a}<\infty$.  

First take $a\leq 0$. Following \cite{KillipOhPoVi2017}, we introduce the functional
\[
J_{a}(f)=\frac{\|f\|_{L^{2}}\| f\|^{\frac{3}{1+\alpha}}_{\dot{H}^{1}_{a}}\|f\|^{\frac{3\alpha}{1+\alpha}}_{L^{6}}}{\|f\|^{4}_{L^{4}}}
\] 
and take a minimizing sequence $\left\{f_{n}\right\}_{n}\in {H}^{1}_{a}$ with
\[
\lim_{n\to\infty}J_{a}(f_{n})=C^{-1}_{\alpha,a}.
\]

By Schwartz symmetrization (and the condition $a\leq 0$), we can assume that each $f_{n}$ is nonnegative and radially decreasing. By scaling, we may assume $\|f_{n}\|_{L^{2}}=1$ and  $\| f_{n}\|_{\dot{H}^{1}_{a}}=1$ for all $n\in \N$, so that $\{f_n\}$ is bounded in $H_{\text{rad}}^1$. Thus, there exists $f_{\ast}\in {H}^{1}_{a}$ such that (up to a subsequence) $f_{n}\to f_{\ast}$ strongly in $L^{4}$ and $f_{n}\rightharpoonup f_{\ast}$ weakly in ${H}^{1}_{a}$ and $L^{6}$ as $n\to \infty$.

We next observe that
\[\begin{split}
C^{-1}_{\alpha,a}=\lim_{n\to\infty}J_{a}(f_{n})=
\lim_{n\to\infty}\frac{\|f_{n}\|^{\frac{\alpha}{\alpha+1}}_{L^{2}}\|f_{n}\|^{\frac{3\alpha}{\alpha+1}}_{L^{6}}}{\|f_{n}\|^{4}_{L^{4}}}
\geq \lim_{n\to\infty} \frac{\|f_{n}\|^{\frac{4\alpha}{\alpha+1}}_{L^{4}}}{\|f_{n}\|^{4}_{L^{4}}}\\
=\lim_{n\to\infty}\|f_{n}\|^{-\frac{4}{\alpha+1}}_{L^{4}}=\|f_{\ast}\|^{-\frac{4}{\alpha+1}}_{L^{4}},
\end{split}\]
yielding $f_{\ast}\neq 0$. Moreover, as $f_{n}\rightharpoonup f_{\ast}$ weakly in $L^{6}$ and $f_{n}\to f_{\ast}$ strongly in $L^{4}$,
\[
C^{-1}_{\alpha,a}\leq J_{a}(f_{\ast})\leq \frac{\|f_{\ast}\|^{\frac{3\alpha}{\alpha+1}}_{L^{6}}}{\|f_{\ast}\|^{4}_{L^{4}}}\leq \lim_{n\to\infty}J_{a}(f_{n})=C^{-1}_{\alpha,a}.
\]
Thus $f_{\ast}$ is a minimizer, with $\|f_{\ast}\|_{L^{2}}=\| f_{\ast}\|_{\dot{H}^{1}_{a}}=1$ and $f_{n}\to f_{\ast}$ strongly in ${H}^{1}_{a}$.

In particular, $f_{\ast}$ is a solution to the Euler--Lagrange equation
\[
\left.\frac{d}{d\epsilon}J_{a}(f_{\ast}+\epsilon\varphi)\right|_{\epsilon=0}=0,
\quad \text{ for all $\varphi\in C^{\infty}_{0}(\R^{3})$},
\]
which implies that $f_*$ satisfies the elliptic equation 
\[
\L_{a}f_{\ast}+\tfrac{\alpha}{\| f_{\ast}  \|^{6}_{L^{6}}}f^{5}_{\ast}-
\tfrac{4(1+\alpha)}{3\| f_{\ast}  \|^{4}_{L^{4}}}f^{5}_{\ast}+
\tfrac{1+\alpha}{3}f_{\ast}=0.
\]
We now set $Q_{\alpha,a}(x)=\lambda^{-1}f_{\ast}(\mu^{-1}\,x)$, where
\[
\lambda^{2}=\tfrac{4(1+\alpha)}{3\alpha}\tfrac{\| f_{\ast}  \|^{6}_{L^{6}}}{\| f_{\ast}  \|^{4}_{L^{4}}}\quad \text{and}\quad 
\mu^{2}=\tfrac{16(1+\alpha)^{2}}{9\alpha}\tfrac{\| f_{\ast}  \|^{6}_{L^{6}}}{\| f_{\ast}  \|^{8}_{L^{4}}},
\]
so that $Q_{\alpha,a}$ solves \eqref{EllipNLS} with 
\[
\omega=\frac{3\alpha}{16(1+\alpha)}\frac{\| f_{\ast}  \|^{8}_{L^{4}}}{\| f_{\ast}  \|^{6}_{L^{6}}}>0.
\]

Using the H\"older inequality $\| f_{\ast}  \|^{4}_{L^{4}}\leq \| f_{\ast}  \|_{L^{2}}\| f_{\ast}  \|^{3}_{L^{6}}
=\| f_{\ast}  \|^{3}_{L^{6}}$ we deduce that $\omega\in (0,3\alpha/16(1+\alpha))$. Finally, it follows from 
straightforward calculations that
\[
\frac{\|Q_{\alpha, a}\|^{6}_{L^{6}}}{\|Q_{\alpha, a}\|^{2}_{\dot{H}^{1}_{a}}}
=\frac{\mu^{2}}{\lambda^{4}}\frac{\| f_{\ast}  \|^{6}_{L^{6}}}{\| f_{\ast}\|_{\dot{H}^{1}_{a}}}=\alpha,
\]
which completes the proof of part (i) of theorem.

We turn now to part (ii) and so assume $a>0$. Let us show that $J_{a}$ has no minimizer when $a>0$. Since $a>0$, it is clear that
\begin{equation}\label{NoAttai}
\begin{split}
\|f\|^{4}_{L^{4}}\leq 
C_{\alpha, 0}\|f\|_{L^{2}}\|f\|^{\frac{3}{1+\alpha}}_{\dot{H}^{1}}\|f\|^{\frac{3\alpha}{1+\alpha}}_{L^{6}}\\
<C_{\alpha, 0}\|f\|_{L^{2}}\|f\|^{\frac{3}{1+\alpha}}_{\dot{H}^{1}_{a}}\|f\|^{\frac{3\alpha}{1+\alpha}}_{L^{6}},
\end{split}
\end{equation}
for all $f\in {H}^{1}\setminus\left\{0\right\}$. This implies that $C_{\alpha, a}\leq C_{\alpha, 0}$. On the other hand, 
consider a sequence  $\left\{x_{n}\right\}_{n\in \N}$ such that $|x_{n}|\to \infty$. As
\[\lim_{n\to\infty}\|Q_{\alpha, 0}(\cdot-x_{n})\|_{\dot{H}^{1}_{a}}=\|Q_{\alpha, 0}\|_{\dot{H}^{1}}
\]
(cf. \eqref{Conver33} below), it follows that
\begin{equation}\label{JLimitN}
\lim_{n\to\infty} J_{a}(Q_{\alpha, 0}(\cdot-x_{n}))=J_{0}(Q_{\alpha, 0})=C^{-1}_{\alpha, 0},
\end{equation}
that is, $C_{\alpha, 0}\leq C_{\alpha, a}$. Therefore $C_{\alpha, 0}= C_{\alpha, a}$. 
Finally, \eqref{NoAttai} and \eqref{JLimitN} implies that the infimum  $C_{\alpha, a}$ is never attained. This completes the proof of theorem.
\end{proof}

\begin{remark}\label{Phoza}
If $\varphi\in {H}^{1}_{a}\setminus\left\{0\right\}$ satisfies the elliptic equation \eqref{EllipNLS} for some $\omega\in \C$, then the following Pohozaev identities hold:
\begin{align} \label{Pho11}
&\|\varphi\|^{2}_{\dot{H}^{1}_{a}}+\|\varphi\|^{6}_{L^{6}}-\|\varphi\|^{4}_{L^{4}}+\omega\|\varphi\|^{2}_{L^{2}}=0,\\
&\tfrac{1}{6}\|\varphi\|^{2}_{\dot{H}^{1}_{a}}+\tfrac{1}{6}\|\varphi\|^{6}_{L^{6}}-\tfrac{1}{4}\|\varphi\|^{4}_{L^{4}}
+\tfrac{\omega}{2}\|\varphi\|^{2}_{L^{2}}=0. \label{Pho22}
\end{align}
Indeed, to obtain \eqref{Pho11}, we multiply \eqref{EllipNLS} by $\bar{\varphi}$ and integrate over $\R^{3}$. Similarly, 
multiplying \eqref{EllipNLS} by $x\cdot\nabla \varphi$ and integrating leads to \eqref{Pho22}. 

Using these identities, we can deduce that 
\[
\omega\in(0, \tfrac{3}{16}).
\]
Indeed, combining \eqref{Pho11} and \eqref{Pho22} we obtain 
$\|\varphi\|^{4}_{L^{4}}=4\omega\|\varphi\|^{2}_{L^{2}}$. This implies that $\omega>0$. On the other hand, if $\omega\geq \frac{3}{16}$, then $\frac{1}{6}x^{6}-\frac{1}{4}x^{4}+\frac{\omega}{2}x^{2}\geq 0$ for all $x$. In this case \eqref{Pho22} yields the contradiction $\varphi\equiv 0$.
\end{remark}

\begin{corollary}[The sharp constant $C_{\alpha, a}$]\label{GNSharp}
For $a\leq 0$,  the sharp constant $C_{\alpha, a}$ in the $\alpha$-Gagliardo-Nirenberg-H\"older inequality \eqref{InvGN}
is  given by 
\begin{equation}\label{OptC}
C_{\alpha, a}=\frac{4(1+\alpha)}{3\alpha^{\frac{\alpha}{2(1+\alpha)}}}
\frac{\| Q_{\alpha,a}\|^{\frac{\alpha-1}{\alpha+1}}_{\dot{H}^{1}_{a}}}{\|Q_{\alpha,a}\|_{L^{2}}},
\end{equation}
where $Q_{\alpha,a}$ is the optimizer given in Theorem \ref{TheInverGN}(i).
\end{corollary}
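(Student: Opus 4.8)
The plan is to extract the sharp constant directly from the optimizer $Q_{\alpha,a}$ constructed in Theorem~\ref{TheInverGN}(i), keeping careful track of how $J_a$ transforms under the rescaling $Q_{\alpha,a}(x)=\lambda^{-1}f_\ast(\mu^{-1}x)$. Since $f_\ast$ is a minimizer, $C_{\alpha,a}^{-1}=J_a(f_\ast)=J_a(Q_{\alpha,a})$ (the quotient $J_a$ is invariant under the scaling $f\mapsto \lambda^{-1}f(\mu^{-1}\cdot)$ by homogeneity, so in principle one could even compute with $Q_{\alpha,a}$ directly; but it is cleaner to use the normalized $f_\ast$, for which $\|f_\ast\|_{L^2}=\|f_\ast\|_{\dot H^1_a}=1$).

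First I would record the three pieces of information we already have about $f_\ast$: the normalization $\|f_\ast\|_{L^2}=\|f_\ast\|_{\dot H^1_a}=1$; the value of $\omega=\tfrac{3\alpha}{16(1+\alpha)}\tfrac{\|f_\ast\|_{L^4}^8}{\|f_\ast\|_{L^6}^6}$ coming from the Euler--Lagrange computation; and the Pohozaev-type relations. Indeed, from the elliptic equation satisfied by $f_\ast$ (the one displayed just before the rescaling in the proof of Theorem~\ref{TheInverGN}), multiplying by $\bar f_\ast$ and by $x\cdot\nabla f_\ast$ and integrating gives two linear relations among $\|f_\ast\|_{\dot H^1_a}^2=1$, $\|f_\ast\|_{L^4}^4$, $\|f_\ast\|_{L^6}^6$, $\|f_\ast\|_{L^2}^2=1$. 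Equivalently, one can transcribe the Pohozaev identities \eqref{Pho11}--\eqref{Pho22} for $Q_{\alpha,a}$ together with the identity $\|Q_{\alpha,a}\|_{L^6}^6=\alpha\|Q_{\alpha,a}\|_{\dot H^1_a}^2$ from Theorem~\ref{TheInverGN}(i), and pull everything back to $f_\ast$ via the scaling. Solving this small linear system expresses $\|f_\ast\|_{L^4}^4$ and $\|f_\ast\|_{L^6}^6$ (hence $\|Q_{\alpha,a}\|$-quantities) in terms of $\alpha$ and $\|Q_{\alpha,a}\|_{\dot H^1_a}$, $\|Q_{\alpha,a}\|_{L^2}$ alone.

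Then I would simply substitute into
\[
C_{\alpha,a}^{-1}=J_a(Q_{\alpha,a})=\frac{\|Q_{\alpha,a}\|_{L^2}\,\|Q_{\alpha,a}\|_{\dot H^1_a}^{\frac{3}{1+\alpha}}\,\|Q_{\alpha,a}\|_{L^6}^{\frac{3\alpha}{1+\alpha}}}{\|Q_{\alpha,a}\|_{L^4}^4},
\]
using $\|Q_{\alpha,a}\|_{L^6}^6=\alpha\|Q_{\alpha,a}\|_{\dot H^1_a}^2$ (so $\|Q_{\alpha,a}\|_{L^6}^{3\alpha/(1+\alpha)}=\alpha^{\alpha/2(1+\alpha)}\|Q_{\alpha,a}\|_{\dot H^1_a}^{\alpha/(1+\alpha)}$) and the value of $\|Q_{\alpha,a}\|_{L^4}^4$ obtained from the linear system. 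Matching the powers of $\|Q_{\alpha,a}\|_{\dot H^1_a}$: the numerator contributes $\tfrac{3}{1+\alpha}+\tfrac{\alpha}{1+\alpha}=\tfrac{3+\alpha}{1+\alpha}$ powers, and one expects $\|Q_{\alpha,a}\|_{L^4}^4$ to scale like $\|Q_{\alpha,a}\|_{\dot H^1_a}^{4/(1+\alpha)}$ (this is forced by the homogeneity of $J_a$ under rescaling, and should drop out of the linear system), giving a net power $\tfrac{3+\alpha}{1+\alpha}-\tfrac{4}{1+\alpha}=\tfrac{\alpha-1}{\alpha+1}$ — matching \eqref{OptC} after flipping to $C_{\alpha,a}$. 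The constant $\tfrac{4(1+\alpha)}{3\alpha^{\alpha/2(1+\alpha)}}$ should emerge from the $\tfrac{4(1+\alpha)}{3}$ coefficient in $\omega$ (equivalently from the linear system) together with the $\alpha^{\alpha/2(1+\alpha)}$ factor above.

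The main obstacle is purely bookkeeping: correctly solving the linear system relating the four norms and making sure the $\|Q_{\alpha,a}\|_{\dot H^1_a}$ and $\|Q_{\alpha,a}\|_{L^2}$ powers balance so that only the stated combination survives. There is no analytic difficulty — all inputs (existence and properties of $Q_{\alpha,a}$, the Pohozaev identities, the $L^6$--$\dot H^1_a$ identity) are already in hand from Theorem~\ref{TheInverGN} and Remark~\ref{Phoza}; one just has to be careful with the scaling exponents, since the definitions of $\lambda,\mu$ already encode the relevant ratios of $\|f_\ast\|_{L^4}$ and $\|f_\ast\|_{L^6}$. I would double-check the final expression by verifying it is invariant under the scaling $Q\mapsto \lambda^{-1}Q(\mu^{-1}\cdot)$ applied to any other optimizer, consistent with the fact that $J_a$ is scale-invariant.
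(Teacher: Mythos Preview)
Your approach is essentially the paper's: use the Pohozaev identities together with the relation $\|Q_{\alpha,a}\|_{L^6}^6=\alpha\|Q_{\alpha,a}\|_{\dot H^1_a}^2$ to express $\|Q_{\alpha,a}\|_{L^4}^4$ in terms of $\|Q_{\alpha,a}\|_{\dot H^1_a}^2$, then substitute into $C_{\alpha,a}=J_a(Q_{\alpha,a})^{-1}$. The paper does this in two lines: taking $3\times\eqref{Pho11}-2\times\eqref{Pho22}$ eliminates the $\omega\|Q\|_{L^2}^2$ term and, combined with $\|Q\|_{L^6}^6=\alpha\|Q\|_{\dot H^1_a}^2$, gives directly
\[
\|Q_{\alpha,a}\|_{L^4}^4=\tfrac{4}{3}(1+\alpha)\|Q_{\alpha,a}\|_{\dot H^1_a}^2,
\]
after which the substitution is immediate.

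One correction to your bookkeeping heuristic: the relation above shows $\|Q_{\alpha,a}\|_{L^4}^4$ carries exactly two powers of $\|Q_{\alpha,a}\|_{\dot H^1_a}$, not $4/(1+\alpha)$; the scale-invariance of $J_a$ does not by itself force a single-power relation between these two norms, since the scaling group is two-parameter. With the correct exponent $2$ in the denominator, the net power of $\|Q_{\alpha,a}\|_{\dot H^1_a}$ in $C_{\alpha,a}^{-1}$ is $\tfrac{3+\alpha}{1+\alpha}-2=\tfrac{1-\alpha}{1+\alpha}$, and inverting gives the exponent $\tfrac{\alpha-1}{\alpha+1}$ in \eqref{OptC}. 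The detour through $f_\ast$ and the explicit $\lambda,\mu$ is unnecessary once you note $J_a$ is scale-invariant and work with $Q_{\alpha,a}$ from the start.
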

\begin{proof}
Combining the Pohozaev identities \eqref{Pho11} and \eqref{Pho22} we obtain
\begin{equation}\label{Ne0}
\| Q_{\alpha,a}\|^{4}_{L^{4}}=\tfrac{4}{3}(1+\alpha)\|  Q_{\alpha,a}\|^{2}_{\dot{H}^{1}_{a}}.
\end{equation}
As ${\|Q_{\alpha, a}\|^{6}_{L^{6}}}={\alpha}{\|Q_{\alpha, a}\|^{2}_{\dot{H}^{1}_{a}}}$, it follows that
\begin{align*}
C_{\alpha, a}&=\frac{1}{J_{a}(Q_{\alpha,a})}=
\frac{\| Q_{\alpha,a}\|^{4}_{L^{4}}}{\|Q_{\alpha,a}\|_{L^{2}}\|Q_{\alpha,a}\|^{\frac{3}{1+\alpha}}_{\dot{H}^{1}_{a}}\|Q_{\alpha,a}\|_{L^{6}}^{\frac{3\alpha}{1+\alpha}}}=\frac{4(1+\alpha)}{3\alpha^{\frac{\alpha}{2(1+\alpha)}}}
\frac{\|Q_{\alpha,a}\|^{\frac{\alpha-1}{\alpha+1}}_{\dot{H}^{1}_{a}}}{\|Q_{\alpha,a}\|_{L^{2}}}.
\end{align*}
\end{proof}

\begin{remark}\label{Compa} We have $C^{-1}_{1, a}\leq C^{-1}_{1, 0}$ for any $a\in(-\frac{1}{4},0]$. In particular, from \eqref{OptC} we see that
\[
\| Q_{1,a}\|_{L^{2}}=\tfrac{8}{3}C^{-1}_{1, a}\leq \tfrac{8}{3}C^{-1}_{1, 0}=\| Q_{1,0}\|_{L^{2}}.
\]
\end{remark}
\subsection{Variational analysis}\label{DomainsC}
Throughout the rest of the paper, we let $Q_{1, a\wedge 0}$ denote an optimizer of  \eqref{GNI} given in Theorem \ref{TheInverGN}(i) with $\alpha=1$.  Noting that \eqref{Ne0} implies
$\| Q_{1, a\wedge 0}\|^{4}_{L^{4}}=\frac{8}{3}\|  Q_{1, a\wedge 0}\|^{2}_{\dot{H}^{1}_{a}}$ and recalling
${\|Q_{1, a\wedge 0}\|^{6}_{L^{6}}}={\|Q_{1, a\wedge 0}\|^{2}_{\dot{H}^{1}_{a}}}$, we observe that
\[
E_{a\wedge 0}(Q_{1, a\wedge 0})=\tfrac{1}{2}\|  Q_{1, a\wedge 0}\|^{2}_{\dot{H}^{1}_{a\wedge 0}}
-\tfrac{1}{4}\| Q_{1, a\wedge 0}\|^{4}_{L^{4}}+\tfrac{1}{6}\| Q_{1, a\wedge 0}\|^{6}_{L^{6}}=0.
\]

To begin the analysis, we define
\begin{align}\label{Vpfree}
d_{a}({m})&:=\inf\left\{E_{a\wedge 0}(f):\,\, f\in H^{1}(\R^{3}),\,\, M(f)=m  \right\},\\
m_{\ast}&:=\sup\left\{m>0: d_{a}({m})=0\right\}.\nonumber
\end{align}

We then have the following: 
\begin{proposition}\label{FiVP}
Let $a>-\frac{1}{4}$ and  $\alpha\in(0,\infty)$.
\begin{itemize}
\item[(i)] Assume $0\leq m\leq M(Q_{1, a\wedge 0})$. Then $d_{a}({m})=0$.
\item[(ii)] Assume $m> M(Q_{1, a\wedge 0})$. Then $d_{a}({m})<0$. In particular, $m_{\ast}=M(Q_{1, a\wedge 0})$. 
\item[(iii)] The infimum function $d_{a}:[0,\infty)\rightarrow \R$  is continuous, non-increasing and non-positive on $[0,\infty)$.
Moreover, if $m\geq M(Q_{1, a\wedge 0})$, then the variational problem \eqref{Vpfree} is well-defined and $d_{a}({m})=E_{a\wedge 0}(f_{\ast})$
for some $f_{\ast}\in H^{1}(\R^{3})$.
\end{itemize}
\end{proposition}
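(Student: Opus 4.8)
The plan is to prove the three parts in order, using the sharp Gagliardo--Nirenberg--H\"older inequality from Theorem~\ref{TheInverGN} together with a scaling analysis of the energy functional. For part (i), I would bound $E_{a\wedge 0}(f)$ from below using \eqref{InvGNew} with $\alpha=1$: writing $K=\|f\|_{\dot H^1_{a\wedge 0}}^2$, $P=\|f\|_{L^6}^6$, and using $\|f\|_{L^4}^4 \le C_{1,a\wedge 0}\,m^{1/2}\,K^{3/4}\,P^{1/4}$, one gets a lower bound for $E_{a\wedge 0}(f)$ in terms of $K$, $P$, and $m$. The key structural point (already visible in the computation $E_{a\wedge 0}(Q_{1,a\wedge 0})=0$ recorded just before the proposition) is that when $m = M(Q_{1,a\wedge 0})$ the minimum of this lower bound over $K,P>0$ is exactly $0$, attained along the curve $P=\alpha K$ with $\alpha = 1$; for $m < M(Q_{1,a\wedge 0})$ the sharp constant only improves, so the lower bound stays $\ge 0$, giving $d_a(m)\ge 0$. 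For the matching upper bound $d_a(m)\le 0$, I would use the scaling family $f_{\lambda,\mu}(x)=\lambda f(\mu x)$ (mass preserved by choosing $\lambda,\mu$ appropriately, as in the rescaling $Q_{\alpha,a}(x)=\lambda^{-1}f_\ast(\mu^{-1}x)$ in the proof of Theorem~\ref{TheInverGN}) applied to a fixed profile, and send a parameter to a degenerate limit so that $E_{a\wedge 0}\to 0^+$ or $\le 0$; concretely, the scaling $u_\mu = \mu^{1/2} f(\mu \cdot)$ preserves $M$, scales $\dot H^1_{a\wedge 0}$ by $\mu$ and $L^6$ by $\mu^{2}$ in the right way to drive the energy down, so $d_a(m)\le 0$, hence $d_a(m)=0$ on $[0,M(Q_{1,a\wedge 0})]$.

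For part (ii), I need $d_a(m)<0$ when $m>M(Q_{1,a\wedge 0})$. Here I would take $f = c\,Q_{1,a\wedge 0}$ with $c>1$ chosen so that $M(f)=m$, or more flexibly rescale $Q_{1,a\wedge 0}$ by both amplitude and spatial dilation to hit mass $m$ while making $E_{a\wedge 0}$ strictly negative. Since $E_{a\wedge 0}(Q_{1,a\wedge 0})=0$ with the three terms in exact balance, increasing the mass slightly and then optimizing the dilation makes the (negative) quartic term dominate over the kinetic and sextic terms at an appropriate scale, yielding $E_{a\wedge 0}<0$. This immediately forces $m_\ast = M(Q_{1,a\wedge 0})$, combining (i) and (ii).

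For part (iii), monotonicity of $d_a$ follows from the scaling $f\mapsto \mu^{1/2} f(\mu\cdot)$: given $m_1<m_2$ and a near-optimizer for $m_2$, one produces a competitor for $m_1$ with no larger energy (using that along the relevant scaling one can decrease mass while not increasing energy, exploiting the sign structure). Non-positivity is parts (i)--(ii). Continuity I would get from upper semicontinuity (explicit competitors depending continuously on $m$) together with lower semicontinuity coming from the uniform coercivity bound of part (i); alternatively, monotone plus the explicit description forces continuity. For the attainment statement when $m\ge M(Q_{1,a\wedge 0})$, I would run the direct method: take a minimizing sequence, Schwarz-symmetrize (legitimate since $a\wedge 0\le 0$) to get radial decreasing functions, use the a priori kinetic bound $\tfrac12\|f\|_{\dot H^1}^2\le E_{a\wedge 0}(f)+\tfrac38 M(f)$ (the Young's inequality bound from Section~\ref{Sec:smalldata}) to see the sequence is bounded in $H^1_{\mathrm{rad}}$, extract a weak limit $f_\ast$, use Strauss compactness to pass to the limit in $L^4$, weak lower semicontinuity for the $\dot H^1_a$ and $L^6$ terms, and rule out $f_\ast=0$ precisely because $d_a(m)<0$ for $m>M(Q_{1,a\wedge 0})$ (so the $L^4$ norm cannot vanish in the limit) — the boundary case $m=M(Q_{1,a\wedge 0})$ being covered by $Q_{1,a\wedge 0}$ itself. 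The main obstacle I anticipate is the sharp scaling bookkeeping in parts (i) and (ii): one must verify that the two-variable minimization of the energy lower bound over $(K,P)$ genuinely has infimum $0$ exactly at $m=M(Q_{1,a\wedge 0})$ and is positive below it, which is where the precise value $M(Q_{1,a\wedge 0})=(\tfrac83)^2 C_{1,a\wedge 0}^{-2}$ enters, and dually that the competitor construction is tight enough to show $d_a<0$ strictly above the threshold.
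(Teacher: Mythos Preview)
Your overall strategy matches the paper's: use the sharp $\alpha=1$ Gagliardo--Nirenberg--H\"older inequality plus Young to get the lower bound in (i), a mass-preserving rescaling to get $d_a(m)\le 0$, an explicit rescaling of $Q_{1,a\wedge 0}$ for (ii), and the direct method with Schwarz symmetrization for attainment in (iii). However, several steps as written are incorrect and would not go through.

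First, the scaling $u_\mu=\mu^{1/2}f(\mu\,\cdot)$ does \emph{not} preserve mass in $\R^3$; it is the $\dot H^1$-critical scaling. The mass-preserving dilation is $g_s(x)=s^{3/2}g(sx)$, under which $E_{a\wedge 0}(g_s)=\tfrac{s^2}{2}\|g\|^2_{\dot H^1_{a\wedge 0}}-\tfrac{s^3}{4}\|g\|^4_{L^4}+\tfrac{s^6}{6}\|g\|^6_{L^6}\to 0$ as $s\to 0$, which is how the paper gets $d_a(m)\le 0$. Second, your monotonicity argument is backward: to prove $d_a$ is non-increasing you must, given $m_1<m_2$ and a near-optimizer $f$ with $M(f)=m_1$, produce a competitor of mass $m_2$ with smaller energy, not the other way around. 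The paper does this with $f^s(x)=s^{-1/2}f(s^{-1}x)$, $s^2=m_2/m_1>1$, which gives $M(f^s)=m_2$ and $E_{a\wedge 0}(f^s)=E_{a\wedge 0}(f)-\tfrac{s-1}{4}\|f\|^4_{L^4}<E_{a\wedge 0}(f)$. The direction you propose (decrease mass without increasing energy) fails along these scalings. Third, in the attainment argument you must rule out mass dropping: weak lower semicontinuity only gives $M(f_\ast)\le m$, and if $M(f_\ast)<m$ one reaches a contradiction precisely by the monotonicity scaling above (applied to $f_\ast$), which the paper makes explicit. Once these are fixed your argument coincides with the paper's.
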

\begin{proof} For (i), we note that $d_{a}({m})\leq 0$ for $m\geq 0$. Indeed, the functions $g_{s}(x):=s^{\frac{3}{2}}g(sx)$ obey $M(g_{s})=M(g)$ and
\[E_{a\wedge 0}(g_{s})=\tfrac{s^{2}}{2}\|  g\|^{2}_{\dot{H}^{1}_{a\wedge 0}}
-\tfrac{s^{3}}{4}\| g  \|^{4}_{L^{4}}
+\tfrac{s^{6}}{6}\| g  \|^{6}_{L^{6}}\rightarrow 0 \quad \text{as $s\rightarrow0$},
\]
yielding $d_{a}(m)\leq 0$. Moreover, by the $(\alpha=1)$-Gagliardo-Nirenberg inequality \eqref{InvGNew} and \eqref{OptC} we have
\[\|f\|^{4}_{L^{4}}\leq 
\tfrac{8}{3}\( \tfrac{M(f)}{M(Q_{1, a\wedge 0})} \)^{\frac{1}{2}}\|f\|^{\frac{3}{2}}_{\dot{H}^{1}_{a\wedge 0}}\|f\|^{\frac{3}{2}}_{L^{6}}.  \]
Thus, by Young's inequality, we obtain
\begin{equation}\label{IIN}
\begin{split}
E_{a\wedge 0}(f)\geq \tfrac{1}{2}\|f\|^{2}_{\dot{H}^{1}_{a\wedge 0}}+\tfrac{1}{6}\|f\|^{6}_{L^{6}}-\tfrac{2}{3}
\( \tfrac{M(u)}{M(Q_{1, a\wedge 0})} \)^{\frac{1}{2}}\|f\|^{\frac{3}{2}}_{\dot{H}^{1}_{a\wedge 0}}\|f\|^{\frac{3}{2}}_{L^{6}}\\
\geq \left[1-\( \tfrac{M(f)}{M(Q_{1, a\wedge 0})} \)^{\frac{1}{2}} \right]\left[\tfrac{1}{2}\|f\|^{2}_{\dot{H}^{1}_{a\wedge 0}}+\tfrac{1}{6}\|f\|^{6}_{L^{6}}\right].
\end{split}
\end{equation}
This yields $E_{a\wedge 0}(f)\geq 0$ when $0\leq M(f)\leq M(Q_{1, a\wedge 0})$, which implies (i).

We turn to (ii). First note that $E_{a\wedge 0}(Q_{1, a\wedge 0})=0$ and \eqref{IIN} yield $d(M(Q_{1, a\wedge 0}))=0$. 
Next, suppose $m>M(Q_{1, a\wedge 0})$ and set 
\[
Q^{s}_{1, a\wedge 0}(x)=s^{-\frac{1}{2}}Q_{1, a\wedge 0}(s^{-1}x),\qtq{where} s^{2}=m/M(Q_{1, a\wedge 0}).
\]
Then $s>1$, $M(Q^{s}_{1, a\wedge 0})=m$ and 
\[E_{a\wedge 0}(Q^{s}_{1, a\wedge 0})=E_{a\wedge 0}(Q_{1, a\wedge 0})-\tfrac{\( s-1\)}{4}\|Q_{1, a\wedge 0}\|^{4}_{L^{4}_{x}}
=-\tfrac{\( s-1\)}{4}\|Q_{1, a\wedge 0}\|^{4}_{L^{4}_{x}}<0.
\]
Consequently, $d_{a}(m)<0$ when $m>M(Q_{1, a\wedge 0})$, which yields (ii).

Finally, we prove (iii). We first show that $d_{a}(m)$ is non-increasing for $m\geq0$. Given $0<m_{1}<m_{2}$, we choose
$f\in H^{1}(\R^{3})$ such that $M(f)=m_{1}$. We define 
$f^{s}(x):=s^{-\frac{1}{2}}f(s^{-1}x)$ with $s^{2}=m_{2}/m_{1}$. By definition, $s^{2}>1$, $M(f^{s})=m_{2}$, and
\begin{equation}\label{Contra}
E_{a\wedge 0}(f^{s})=E_{a\wedge 0}(f)-\tfrac{\( s-1\)}{4}\|f\|^{4}_{L^{4}_{x}}<E_{a\wedge 0}(f),
\end{equation}
yielding $d_{a}(m_{2})\leq d_{a}(m_{1})$.  We now show that the 
minimizer of $d_{a}(m)$ is achieved for all $m>M(Q_{1, a\wedge 0})$. Let $\left\{ f_{n} \right\}_{n\in \N}$
be a minimizing sequence for $d_{a}(m)$. Since
\begin{equation}\label{IdenE}
E_{a\wedge 0}(f_{n})+\tfrac{3}{32}M(f_{n})=\tfrac{1}{2}\|f_{n}\|^{2}_{\dot{H}^{1}_{a\wedge 0}}+
\tfrac{1}{6}\int_{\R^{3}}|f_{n}|^{2}\Big(|f_{n}|^{2}-\tfrac{3}{4}\Big)^{2}dx,
\end{equation}
it follows that the sequence $\left\{ f_{n} \right\}_{n\in \N}$ is bounded in ${H}^{1}$.
By Schwartz symmetrization, we can assume that $\left\{ f_{n} \right\}_{n\in \N}$ is radial for all $n$. Thus there exists $f_{\ast}$ such that (passing to a subsequence), we have $f_n$ converges weakly to $f_\ast$ in $H^1$ and $L^6$ and strongly in $L^4$. By weak lower-semicontinuity we see that
\[
E_{a\wedge 0}(f_{\ast})\leq d_{a}(m)<0 \quad \text{and}\quad  M(f_{\ast})\leq m.
\]
In particular $f_{\ast}\neq 0$. Moreover, if $M(f_{\ast})< m$, then the same argument given above shows that there exists $g\in {H}^{1}$ such that $M(g)=m$ and
\[
d_{a}(m)\leq E_{a\wedge 0}(g)<E_{a\wedge 0}(f_{\ast})\leq d_{a}(m)
\]
(cf. \eqref{Contra}), which is a contradiction. Thus we must have $M(f_{\ast})= m$ and $E_{a\wedge 0}(f_{\ast})= d_{a}(m)$.  On the other hand, as mentioned above, $d_{a}(m)$ is non-positive and non-increasing for $m\geq 0$. Finally, the continuity of $d_a(m)$ follows as in the proof of \cite[Theorem~4.1]{KillipOhPoVi2017}. \end{proof}

We now return to the variational problem defined in the introduction, namely, 
\begin{equation}\label{VirialCharac}
\EE_{a}(m):=\inf\left\{E_{a\wedge 0}(f): f\in H^{1}(\R^{3}),\, M(f)=m \,\, \mbox{and}\,\, V_{a\wedge 0}(f)=0 \right\}, 
\end{equation}
where $V_{a}$ is the virial functional
\begin{equation}\label{VirialF}
V_{a}(f)=\| f\|^{2}_{\dot{H}^{1}_{a}}+\|f\|^{6}_{L^{6}}
-\tfrac{3}{4}\|f\|^{4}_{L^{4}}.
\end{equation}
By definition, $\EE_{a}(m)=\infty$ when the set $\left\{M(u)=m \,\, \mbox{and}\,\, V_{a\wedge 0}(f)=0 \right\}$ is empty. We also recall the region $\K_{a}\subset \R^{2}$ given by
\[
\K_{a}:= \left\{(m,e): 0<m<M(Q_{1, a\wedge 0}) \,\, \mbox{and} \,\, 0<e<\EE_{a}(m)\right\}.
\]
Finally, we set $S_{a}(x):=\frac{1}{\sqrt{2}}Q_{1, a\wedge 0}(\frac{\sqrt{3}}{2}x)$.

\begin{theorem}\label{Vscatte2}
Let $a>-\frac{1}{4}$ and $f\in {H}^{1}$. The following statements hold.
\begin{itemize}
\item[(i)] If  $(M(f), E_{a}(f))\in \K_{a}$, then $V_{a\wedge 0}(f)>0$.
\item[(ii)] If $0<m<M(S_{a})$, then  $\EE_{a}(m)=\infty$.
\item[(iii)] If $M(S_{a})\leq m<M(Q_{1, a\wedge 0})$, then  $0<\EE_{a}(m)<\infty$.
\item[(iv)] If $m\geq M(Q_{1, a\wedge 0})$ then $\EE_{a}(m)=d_{a}(m)$. In particular, $\EE_{a}(M(Q_{1, a\wedge 0}))=0$.  Furthermore, the infimum $\EE_{a}(m)$ is achieved and the infimum function $\EE_{a}(m)$ is strictly decreasing  and lower semicontinuous. 
\end{itemize}
\end{theorem}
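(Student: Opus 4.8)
The plan is to prove the four assertions in the order (ii), (iii), (iv), (i), since (i) will use (ii) and the bare definition of $\EE_a$, while (iii)--(iv) are essentially self-contained. Throughout write $b:=a\wedge 0\le 0$, so that $E_{a\wedge 0}=E_b$, $V_{a\wedge 0}=V_b$, $\EE_a=\EE_b$, $Q_{1,b}:=Q_{1,a\wedge 0}$, and recall from Corollary~\ref{GNSharp} and Remark~\ref{Compa} that the sharp constant in \eqref{InvGNew} with $\alpha=1$ satisfies $C_{1,b}=\tfrac{8}{3}M(Q_{1,b})^{-1/2}$. I would lean on two elementary facts: under the mass-preserving dilation $f_s(x):=s^{3/2}f(sx)$ one has $V_b(f_s)=s\,\tfrac{d}{ds}E_b(f_s)=s^{2}h_f(s)$ with $h_f(s):=\|f\|^{2}_{\dot{H}^{1}_{b}}-\tfrac{3}{4}s\|f\|^{4}_{L^{4}}+s^{4}\|f\|^{6}_{L^{6}}$ convex in $s>0$ and $h_f(0)=\|f\|^{2}_{\dot{H}^{1}_{b}}>0$; and on the constraint set $\{V_b=0\}$ one has $E_b(f)=\tfrac{1}{6}(\|f\|^{2}_{\dot{H}^{1}_{b}}-\|f\|^{6}_{L^{6}})$. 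For part (ii), I start from $V_b(f)=0$, i.e. $\|f\|^{2}_{\dot{H}^{1}_{b}}+\|f\|^{6}_{L^{6}}=\tfrac{3}{4}\|f\|^{4}_{L^{4}}$, bound the right side by the sharp \eqref{InvGNew}, and set $\lambda:=\|f\|^{6}_{L^{6}}/\|f\|^{2}_{\dot{H}^{1}_{b}}\in(0,\infty)$; the inequality becomes $M(f)\ge M(Q_{1,b})\bigl(\tfrac{1+\lambda}{2\lambda^{1/4}}\bigr)^{2}$. Minimizing the right side over $\lambda$ (the minimum is at $\lambda=\tfrac{1}{3}$, and a direct computation gives $M(S_a)=\tfrac{4}{3\sqrt{3}}M(Q_{1,b})$) yields $M(f)\ge M(S_a)$ whenever $V_b(f)=0$; hence the admissible set in \eqref{VirialCharac} is empty, so $\EE_a(m)=\infty$, for $m<M(S_a)$.

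For the finiteness half of (iii) (needed also in (iv)), I would exhibit the explicit curve $\tilde f_\sigma(x):=\sigma\,Q_{1,b}\bigl(\sigma\sqrt{2-\sigma^{2}}\,x\bigr)$, $\sigma\in(0,\sqrt{2})$. Using $\|Q_{1,b}\|^{6}_{L^{6}}=\|Q_{1,b}\|^{2}_{\dot{H}^{1}_{b}}$ and $\|Q_{1,b}\|^{4}_{L^{4}}=\tfrac{8}{3}\|Q_{1,b}\|^{2}_{\dot{H}^{1}_{b}}$, one checks directly that $V_b(\tilde f_\sigma)=0$ for every $\sigma$, that $\tilde f_1=Q_{1,b}$ and $\tilde f_{1/\sqrt{2}}=S_a$, and that $M(\tilde f_\sigma)=\sigma^{-1}(2-\sigma^{2})^{-3/2}M(Q_{1,b})$ has minimum $M(S_a)$ at $\sigma=\tfrac{1}{\sqrt{2}}$ and tends to $\infty$ at both endpoints, hence attains every value in $[M(S_a),\infty)$; so $\EE_a(m)<\infty$ for all $m\ge M(S_a)$. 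For the positivity half, given admissible $f$ at mass $m\in[M(S_a),M(Q_{1,b}))$, inequality \eqref{IIN} gives $E_b(f)\ge c_m\bigl(\tfrac{1}{2}\|f\|^{2}_{\dot{H}^{1}_{b}}+\tfrac{1}{6}\|f\|^{6}_{L^{6}}\bigr)$ with $c_m:=1-(m/M(Q_{1,b}))^{1/2}>0$, while $V_b(f)=0$ together with the Gagliardo--Nirenberg bound $\|f\|^{4}_{L^{4}}\lesssim\|f\|_{L^{2}}\|f\|^{3}_{\dot{H}^{1}_{b}}$ (via \eqref{equiNorms}) forces $\|f\|_{\dot{H}^{1}_{b}}\gtrsim m^{-1/2}$; hence $\EE_a(m)\gtrsim c_m m^{-1}>0$.

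For (iv): the inclusion of admissible sets gives $\EE_a(m)\ge d_a(m)$, while for $m\ge M(Q_{1,b})$ the minimizer $f_*$ of $d_a(m)$ from Proposition~\ref{FiVP}(iii) is automatically admissible for $\EE_a(m)$ (the mass-preserving variation $s\mapsto E_b((f_*)_s)$ is minimized at $s=1$, so $V_b(f_*)=0$), giving $\EE_a(m)=d_a(m)$, attained; in particular $\EE_a(M(Q_{1,b}))=d_a(M(Q_{1,b}))=0$ by Proposition~\ref{FiVP}(i), attained by $Q_{1,b}$ (for which $V_b=0$, $E_b=0$). On $[M(Q_{1,b}),\infty)$, $\EE_a=d_a$ is continuous (Proposition~\ref{FiVP}(iii)) and strictly decreasing, since dilating a $d_a$-minimizer by $s^{2}=m_2/m_1>1$ strictly lowers $E_b$ by $\tfrac{s-1}{4}\|f\|^{4}_{L^{4}}>0$. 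On $[M(S_a),M(Q_{1,b}))$, I would obtain strict monotonicity by flowing a near-minimizer of $\EE_a(m_1)$ along the two-parameter dilation $\sigma\mapsto\sigma f(\tau(\sigma)\,\cdot)$ with $\tau(\sigma)$ chosen so that $V_b=0$ is preserved; a short computation along such curves gives $\tfrac{dE_b}{dM}<0$, so any larger mass is reached with strictly smaller energy. Lower semicontinuity amounts (since $\EE_a$ is non-increasing) to right-continuity, which I would get by concentration-compactness: given $m_n\downarrow m$ and near-minimizers $f_n$ (bounded in $H^1$ by \eqref{IIN}), Schwartz-symmetrize and pass to a weak $H^{1}$-and-$L^{6}$ limit, strong in $L^{4}$, $f_*\neq 0$ with $M(f_*)\le m$, $V_b(f_*)\le 0$, $E_b(f_*)\le\liminf E_b(f_n)$; then repair $f_*$ into an admissible competitor at mass exactly $m$---first dilate along $f_s$ to the root of $h_{f_*}$ exceeding $1$, reaching $V_b=0$ without raising $E_b$, then dilate along the constraint-preserving family to mass $m$, again without raising $E_b$---to conclude $\EE_a(m)\le\liminf\EE_a(m_n)$. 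The same argument run at fixed $m$ also shows $\EE_a(m)$ is attained for $M(S_a)\le m<M(Q_{1,b})$, and part (ii) re-enters to prevent the weak limit from having mass below $M(S_a)$.

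For (i): given $(M(f),E_a(f))\in\K_a$, so $0<M(f)<M(Q_{1,b})$, $0<E_a(f)<\EE_a(M(f))$, and $E_b(f)\le E_a(f)$ (equality if $a\le 0$), suppose $V_b(f)\le 0$. If $V_b(f)=0$, then $\EE_a(M(f))\le E_b(f)\le E_a(f)<\EE_a(M(f))$ by the definition of $\EE_a$, a contradiction (and if $M(f)<M(S_a)$ this case is already excluded by (ii)). If $V_b(f)<0$, then $h_f(1)=V_b(f)<0<h_f(0)$ with $h_f(s)\to\infty$, so $h_f$ has a root $s_{**}>1$ (take the least one); then $M(f_{s_{**}})=M(f)$, $V_b(f_{s_{**}})=0$, and $\tfrac{d}{ds}E_b(f_s)=\tfrac1sV_b(f_s)<0$ on $(1,s_{**})$ gives $E_b(f)>E_b(f_{s_{**}})\ge\EE_a(M(f))$---again contradicting $E_b(f)\le E_a(f)<\EE_a(M(f))$, or directly contradicting (ii) when $M(f)<M(S_a)$. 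Hence $V_b(f)>0$. I expect the main obstacle to be precisely the repair-by-scaling and compactness behind the strict monotonicity and lower semicontinuity on $[M(S_a),M(Q_{1,b}))$: one must check that neither dilation move increases the energy on $\{V_b=0\}$, and that the symmetrized near-minimizers cannot concentrate into a limit of mass below $M(S_a)$. The two computations that tie everything together are the identity $E_b|_{\{V_b=0\}}=\tfrac{1}{6}(\|\cdot\|^{2}_{\dot{H}^{1}_{b}}-\|\cdot\|^{6}_{L^{6}})$ and the sharp-constant formula $C_{1,b}=\tfrac{8}{3}M(Q_{1,b})^{-1/2}$, which pin the threshold $M(S_a)$ to the ratio $\lambda=\tfrac{1}{3}$.
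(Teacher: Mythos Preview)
Your proposal is correct and complete at the level of detail the paper itself provides (the paper also defers parts of (iv) to \cite{KillipOhPoVi2017}). Several of your moves are genuinely different from the paper's and worth noting.

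For (ii), the paper rewrites $C_{1,b}$ in terms of $\|S_a\|_{L^2}$ and then applies Young's inequality to $\|f\|^{3/2}_{\dot H^1_b}\|f\|^{3/2}_{L^6}$ to obtain $V_b(f)>0$ when $M(f)<M(S_a)$. Your $\lambda$-minimization is exactly the same inequality unpacked: Young gives equality precisely at $\lambda=1/3$, which is your minimizer. For the finiteness half of (iii), the paper invokes Lemma~\ref{KL1}(b) starting from $S_a$; your explicit one-parameter curve $\tilde f_\sigma$ living on $\{V_b=0\}$ and sweeping all masses $\ge M(S_a)$ is more direct and has the pleasant feature of passing through both $S_a$ and $Q_{1,b}$. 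For $V_b(f_*)=0$ in (iv), the paper uses the Euler--Lagrange equation and the Pohozaev identities; your observation that $s\mapsto E_b((f_*)_s)$ is minimized at $s=1$ over a mass-preserving family is shorter and avoids any PDE input.

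The one place where your sketch is thinner than the paper's is strict monotonicity on $[M(S_a),M(Q_{1,b}))$. Showing $\tfrac{dE_b}{dM}<0$ along the constraint-preserving flow through a single admissible $f$ gives only $\EE_a(m_2)\le\EE_a(m_1)$ after passing to a minimizing sequence, unless the derivative is bounded away from zero \emph{uniformly} along that sequence. The paper handles this via Lemma~\ref{KL1}(b), which gives the quantitative drop $E_b(f_*)\le E_b(f)-\tfrac{m_2-m_1}{6m_1}\|f\|^2_{\dot H^1_b}$, combined with the kinetic-energy lower bound $\|f\|_{\dot H^1_b}\gtrsim m^{-1/2}$ on $\{V_b=0\}$ that you already derived in (iii). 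You have both ingredients; you just need to make explicit that the derivative along your flow is controlled from below by $\|f\|^2_{\dot H^1_b}$, so that the uniform bound closes the argument.
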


The proof relies on the following lemma, whose proof we omit, as it is essentially the same as that of \cite[Lemmas 5.3 and 5.4]{KillipOhPoVi2017}.

\begin{lemma}\label{KL1}
Let $m>0$ and $f\in H^{1}(\R^{3})\setminus\left\{0\right\}$. Then:
\begin{itemize}
\item[a.] Assume that $V_{a\wedge 0}(f)<0$. 
Writing $f^{s}(x):=s^{\frac{3}{2}}f(sx)$, there exists $s>1$ such that $V_{a\wedge 0}(f^{s})=0$ and  $E_{a\wedge 0}(f^{s})<E_{a\wedge 0}(f)$. 
\item[b.]  If $f$ satisfies $0<M(f)<m$ and $V_{a\wedge 0}(f)=0$, then there exists $f_{\ast}\in H^{1}(\R^{3})$ such that
\begin{equation}\label{IMporC}
\begin{split}
M(f_{\ast})=m,\quad E_{a\wedge 0}(f_{\ast})\leq E_{a\wedge 0}(f)-
\( \tfrac{m-M(f)}{6M(f)}\)\|f\|^{2}_{\dot{H}^{1}_{a}},
\quad \mbox{and}\quad V_{a\wedge 0}(f_{\ast})=0.
\end{split}
\end{equation}
\end{itemize}
\end{lemma}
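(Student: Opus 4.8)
The plan is to prove both parts by explicit rescaling: we move a given function onto the constraint manifold $\{V_{a\wedge 0}=0\}$, adjusting its mass when necessary, so that no concentration--compactness or symmetrization is required. For part~(a) the relevant object is the mass-preserving dilation $f^{s}(x)=s^{3/2}f(sx)$ appearing in the statement. As recorded in the proof of Proposition~\ref{FiVP}, it obeys $\|f^{s}\|^{2}_{\dot{H}^{1}_{a\wedge 0}}=s^{2}\|f\|^{2}_{\dot{H}^{1}_{a\wedge 0}}$, $\|f^{s}\|^{4}_{L^{4}}=s^{3}\|f\|^{4}_{L^{4}}$ and $\|f^{s}\|^{6}_{L^{6}}=s^{6}\|f\|^{6}_{L^{6}}$, whence
\[
V_{a\wedge 0}(f^{s})=s^{2}\|f\|^{2}_{\dot{H}^{1}_{a\wedge 0}}+s^{6}\|f\|^{6}_{L^{6}}-\tfrac34 s^{3}\|f\|^{4}_{L^{4}}
\qtq{and}
\tfrac{d}{ds}E_{a\wedge 0}(f^{s})=\tfrac1s\,V_{a\wedge 0}(f^{s}).
\]
Since $f\not\equiv0$, $\|f\|_{L^{6}}>0$, so $V_{a\wedge 0}(f^{s})\to+\infty$ as $s\to\infty$, while $V_{a\wedge 0}(f^{1})=V_{a\wedge 0}(f)<0$. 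Taking $s^{*}:=\inf\{s>1:V_{a\wedge 0}(f^{s})=0\}$, continuity yields $s^{*}\in(1,\infty)$ with $V_{a\wedge 0}(f^{s^{*}})=0$ and $V_{a\wedge 0}(f^{s})<0$ for all $s\in[1,s^{*})$; then $\tfrac{d}{ds}E_{a\wedge 0}(f^{s})<0$ on $[1,s^{*})$, so $E_{a\wedge 0}(f^{s^{*}})<E_{a\wedge 0}(f)$. This gives~(a) with $s=s^{*}$.

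For part~(b) I would argue in two steps. First, to adjust the mass: set $\lambda:=(m/M(f))^{1/3}>1$ and $g:=f(\cdot/\lambda)$. This dilation scales the $\dot{H}^{1}_{a\wedge 0}$-norm like $-\Delta$ and inflates the $L^{p}$-norms, so $M(g)=\lambda^{3}M(f)=m$, $\|g\|^{2}_{\dot{H}^{1}_{a\wedge 0}}=\lambda\|f\|^{2}_{\dot{H}^{1}_{a\wedge 0}}$, $\|g\|^{4}_{L^{4}}=\lambda^{3}\|f\|^{4}_{L^{4}}$ and $\|g\|^{6}_{L^{6}}=\lambda^{3}\|f\|^{6}_{L^{6}}$. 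Using $V_{a\wedge 0}(f)=0$, i.e.\ $\tfrac34\|f\|^{4}_{L^{4}}=\|f\|^{2}_{\dot{H}^{1}_{a\wedge 0}}+\|f\|^{6}_{L^{6}}$, one computes $V_{a\wedge 0}(g)=\lambda(1-\lambda^{2})\|f\|^{2}_{\dot{H}^{1}_{a\wedge 0}}<0$ and, after eliminating the $L^{4}$-term from the energy,
\[
E_{a\wedge 0}(g)-E_{a\wedge 0}(f)=-\tfrac{(\lambda-1)(2\lambda^{2}+2\lambda-1)}{6}\,\|f\|^{2}_{\dot{H}^{1}_{a\wedge 0}}-\tfrac{\lambda^{3}-1}{6}\,\|f\|^{6}_{L^{6}}.
\]
Since $\lambda>1$, the last term is $\le0$ and $2\lambda^{2}+2\lambda-1\ge\lambda^{2}+\lambda+1$; together with $(\lambda-1)(\lambda^{2}+\lambda+1)=\lambda^{3}-1=\tfrac{m-M(f)}{M(f)}$ this gives $E_{a\wedge 0}(g)\le E_{a\wedge 0}(f)-\tfrac{m-M(f)}{6M(f)}\|f\|^{2}_{\dot{H}^{1}_{a\wedge 0}}$. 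Second, since $V_{a\wedge 0}(g)<0$, apply part~(a) to $g$: there is $s>1$ such that $f_{*}:=g^{s}$, with $g^{s}(x)=s^{3/2}g(sx)$, satisfies $V_{a\wedge 0}(f_{*})=0$, $E_{a\wedge 0}(f_{*})<E_{a\wedge 0}(g)$ and $M(f_{*})=M(g)=m$. Chaining the two bounds gives~\eqref{IMporC} (with $\dot{H}^{1}_{a\wedge 0}$, which matches the functionals $E_{a\wedge 0},V_{a\wedge 0}$ used throughout and coincides with $\dot{H}^{1}_{a}$ for $a\le0$).

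The argument is essentially soft; the one genuinely choice-sensitive point is the rescaling in the first step of~(b): the dilation $f\mapsto f(\cdot/\lambda)$ is what produces the sharp constant $\tfrac{m-M(f)}{6M(f)}$, whereas the $\dot{H}^{1}$-preserving rescaling $f\mapsto s^{-1/2}f(s^{-1}x)$ (used for mass adjustment in Proposition~\ref{FiVP}) would only give a strictly weaker one. The remaining care points are routine: one uses the \emph{first} zero $s^{*}$ in the intermediate-value argument for~(a), so that $V_{a\wedge 0}(f^{s})<0$ throughout $[1,s^{*})$ and hence $E_{a\wedge 0}$ strictly decreases; and one invokes the elementary inequality $2\lambda^{2}+2\lambda-1\ge\lambda^{2}+\lambda+1$ valid for $\lambda\ge1$. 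Since throughout we merely rescale a fixed function, neither symmetrization nor compactness plays a role, in contrast to the proof of Proposition~\ref{FiVP}.
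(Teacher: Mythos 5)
Your proof is correct and follows essentially the same route as the paper's (implicit) proof, which is deferred to Lemmas~5.3 and~5.4 of \cite{KillipOhPoVi2017}: the mass-preserving dilation and intermediate-value argument for~(a), then the mass-adjusting dilation $f\mapsto f(\cdot/\lambda)$ followed by~(a) for~(b). The algebra checks out, including the factorization $3\lambda-2\lambda^3-1=-(\lambda-1)(2\lambda^2+2\lambda-1)$ and the elementary comparison $2\lambda^2+2\lambda-1\ge\lambda^2+\lambda+1$ for $\lambda\ge1$; your closing remark that the bound naturally carries $\|f\|_{\dot H^1_{a\wedge 0}}$ rather than $\|f\|_{\dot H^1_a}$ is also a valid observation (the two coincide for $a\le0$, so the discrepancy for $a>0$ is harmless in the paper's applications but is in fact a small typo in the statement).
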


\begin{proof}[{Proof of Theorem \ref{Vscatte2}}]\text{ }

(i) Consider $f\in H^{1}(\R^{3})$ such that $(M(f), E_{a}(f))\in \K_{a}$.
 By definition of the set $\K_{a}$, it is clear that $V_{a\wedge 0}(f)\neq 0$. Suppose that $V_{a\wedge 0}(f)<0$. 
From Lemma~\ref{KL1}(a), we infer that there exists $s>1$
such that  
\[
M(f^{s})=M(f),\quad V_{a\wedge 0}(f^{s})=0,\qtq{and}E_{a\wedge 0}(f^{s})<E_{a\wedge 0}(f).
\]
In this case, by the definition of $\EE_{a}(m)$, we see that 
$\EE_{a}(m)<E_{a\wedge 0}(f)\leq E_{a}(f)$, which is impossible since $(M(f), E_{a}(f))\in \K_{a}$. 

(ii) It suffices to show that no function obeys the constraints
\[
f\in H^{1}(\R^{3}),\quad M(f)=m,\qtq{and} V_{a\wedge 0}(f)=0
\]
if $0<m<M(S_{a})$. To do this, we will express the sharp constant $C_{1,a\wedge 0}$ in terms of the function $S_a$.  To this end, we first define
\begin{equation}\label{DefSca}
\varphi^{r,b}(x):=rQ_{1, a\wedge 0}(bx), \quad r>0,\quad b>0. 
\end{equation}
Direct calculations show that
\begin{equation}\label{ProScaling}
\begin{split}
\| \varphi^{r,b}\|^{2}_{\dot{H}^{1}_{a}}&=r^{2}b^{-1}\| Q_{1, a\wedge 0}\|^{2}_{\dot{H}^{1}_{a}}, 
\quad \| \varphi^{r,b}\|^{4}_{L^{4}}=r^{4}b^{-3}\| Q_{1, a\wedge 0}\|^{4}_{L^{4}},\\
\|\varphi^{r,b}\|^{2}_{L^{2}}&=r^{2}b^{-3}\| Q_{1, a\wedge 0}\|^{2}_{L^{2}}, \quad
\| \varphi^{r,b}\|^{6}_{L^{6}}=r^{6}b^{-3}\| Q_{1, a\wedge 0}\|^{6}_{L^{6}}.
\end{split}
\end{equation}
In particular, from \eqref{Ne0}, we obtain 
\[V_{a\wedge 0}(\varphi^{r,b})=\(\tfrac{r^{2}}{b}+ \tfrac{r^{6}}{b^{3}}-2\tfrac{r^{4}}{b^{3}}\)\| Q_{1, a\wedge 0}\|^{2}_{\dot{H}^{1}_{a}}.\]

Now with $r=\frac{1}{\sqrt{2}}$ and $b=\frac{\sqrt{3}}{2}$ we obtain
\begin{equation}\label{ProScaling11}
M(S_{a})=\tfrac{4}{3\sqrt{3}}M(Q_{1, a\wedge 0}) 
\quad \text{and}\quad 
\quad V_{a\wedge 0}(S_{a})=0.
\end{equation}
We also note that
\begin{equation}\label{Idenmass}
\|S_{a}\|^{6}_{L^{6}}=\tfrac{1}{3}\| S_{a}\|^{2}_{\dot{H}^{1}_{a\wedge 0}}
\quad \text{and}\quad 
\|S_{a}\|^{4}_{L^{4}}=\tfrac{4^{2}}{3^{3}}\| S_{a}\|^{2}_{\dot{H}^{1}_{a\wedge 0}}.
\end{equation}
Now, since the functional $J_{a}$ (from proof of Theorem \ref{TheInverGN})  is invariant under the scaling \eqref{DefSca}, it follows that $S_{a}$ is a minimizer for the variational problem \eqref{GNI}. Thus, by \eqref{Idenmass} we obtain

\begin{equation}\label{EquaSA}
\begin{split}
C_{1, a\wedge 0}&=\frac{\|S_{a}\|^{4}_{L^{4}}}
{\|S_{a}\|_{L^{2}} \| S_{a}\|_{\dot{H}^{1}_{a\wedge 0}}^{\frac{3}{2}}\|S_{a}\|^{\frac{3}{2}}_{L^{6}}}
=\frac{4^{2}}{3^{2}} \frac{\| S_{a}\|^{2}_{\dot{H}^{1}_{a\wedge 0}} 3^{\frac{1}{4}}}
{\|S_{a}\|_{L^{2}}\| S_{a}\|_{\dot{H}^{1}_{a\wedge 0}}^{\frac{1}{2}}\| S_{a}\|_{\dot{H}^{1}_{a\wedge 0}}^{\frac{3}{2}} }\\
&= \frac{4^{2}}{3^{2}}\(\frac{3^{\frac{1}{4}}}{\|S_{a}\|_{L^{2}}}\). 
\end{split}
\end{equation}
By using Young's inequality, we have
\begin{equation}\label{Oldinequ}
\begin{split}
\|f\|^{4}_{L^{4}}\leq
C_{1,a\wedge 0}\|f\|_{L^{2}}\|f\|^{\frac{3}{2}}_{\dot{H}^{1}_{a\wedge 0}}\|f\|^{\frac{3}{2}}_{L^{6}}
&\leq
\frac{4^{2}}{3^{2}}\(\frac{3^{\frac{1}{4}}}{\|S_{a}\|_{L^{2}}}\)\|f\|_{L^{2}}\|f\|^{\frac{3}{2}}_{\dot{H}^{1}_{a\wedge 0}}\|f\|^{\frac{3}{2}}_{L^{6}}\\
&\leq \frac{4}{3}\frac{\|f\|_{L^{2}}}{\|S_{a}\|_{L^{2}}}\bigl[\|f\|^{2}_{_{\dot{H}^{1}_{a\wedge 0}}}+\|f\|^{6}_{L^{6}}\bigr].
\end{split}
\end{equation}
From this we infer that 
\[
V_{a\wedge 0}(f)>0\qtq{whenever}0<M(f)<M(S_{a})
\] 
as claimed.

(iii) Assume  $M(S_{a})\leq m<M(Q_{1, a\wedge 0})$.  Let $f\in \left\{M(u)=m \,\, \mbox{and}\,\, V_{a\wedge 0}(u)=0 \right\}$.
Since $V_{a\wedge 0}(f)=0$, by the $(\alpha=1)$-Gagliardo-Nirenberg-H\"older inequality \eqref{InvGNew}, it follows that
\[\|f\|^{2}_{\dot{H}^{1}_{a\wedge 0}}=\tfrac{3}{4}\|f\|^{4}_{L^{4}}-\|f\|^{6}_{L^{6}}\lesssim
 \|f\|^{3}_{\dot{H}^{1}_{a\wedge 0}}\| f\|_{L^{2}}, 
\]
and hence
\begin{equation}\label{InequGra}
\|f\|_{\dot{H}^{1}_{a\wedge 0}}\| f\|_{L^{2}}\gtrsim 1.
\end{equation}

Next, estimating as we did for \eqref{IIN}, we obtain
\begin{equation}\label{PositiveE}
\begin{split}
E_{a\wedge 0}(f)&\geq \tfrac{1}{2}\|f\|^{2}_{\dot{H}^{1}_{a\wedge 0}}+\tfrac{1}{6}\|f\|^{6}_{L^{6}}-
\tfrac{2}{3}\(\tfrac{M(f)}{M(Q_{1, a\wedge 0})}\)^{\frac{1}{2}}\|f\|^{\frac{3}{2}}_{\dot{H}^{1}_{a\wedge 0}}\| f\|^{\frac{3}{2}}_{L^{6}}\\
&\geq \bigl[1-\(\tfrac{M(f)}{M(Q_{1, a\wedge 0})}\)^{\frac{1}{2}}\bigr]\left\{\tfrac{1}{2}\|f\|^{2}_{\dot{H}^{1}_{a\wedge 0}}+\tfrac{1}{6}\|f\|^{6}_{L^{6}}\right\}\\
& \gtrsim\bigl[1-\(\tfrac{M(f)}{M(Q_{1, a\wedge 0})}\)^{\frac{1}{2}}\bigr][M(f)]^{-1}.
\end{split}
\end{equation}
Taking the infimum on the set $\left\{M(u)=m \,\, \mbox{and}\,\, V_{a\wedge 0}(u)=0 \right\}$ we infer that $\EE_{a}(m)>0$.
Finally,  Lemma \ref{KL1}(b) (with $f=S_{a}$) implies that  $\EE_{a}(m)\leq E_{a\wedge 0}(S_{a})$ if $M(S_{a})\leq M(f)$.
This completes the proof of (iii).

(iv) We first show show that $\EE_{a}(m)=d_{a}(m)$ for $m\geq M(Q_{1, a\wedge 0})$.  On one hand, it is clear that $d_{a}(m)\leq \EE_{a}(m)$. On the other hand, by Proposition \ref{FiVP}  we know that there exists
$f_{\ast}\in {H}^{1}$ with $E_{a\wedge 0}(f_{\ast})=d_{a}(m)$ and $M(f_{\ast})=m$.  We first observe that $V_{a\wedge 0}(f_{\ast})=0$. Indeed, $f_{\ast}$ satisfies the elliptic equation
\eqref{EllipNLS} for some $\omega>0$, which implies by \eqref{Pho11} and \eqref{Pho22} that $V_{a\wedge 0}(f_{\ast})=0$ holds. Therefore, by definition, 
\[
\EE_{a}(m)\leq E_{a\wedge 0}(f_{\ast})=d_{a}(m).
\]

Next we will show that $\EE_{a}(m)$ is strictly decreasing on $[M(S_{a}), M(Q_{1, a\wedge 0})]$. Indeed, 
consider $m_{{2}}<m_{1}$ such that $m_{{2}}$, $m_{1}\in [M(S_{a}), M(Q_{1, a\wedge 0})]$. Moreover,  let  $\left\{f_{n}\right\}_{n\in \N}$ be a minimizing sequence for $\EE_{a}(m_{2})$. Then we have $M(f_{n})=m_{2}$,
$V_{a\wedge 0}(f_{n})=0$ and $E_{a\wedge 0}(f_{n})\rightarrow \EE_{a}(m_{2})$. Since $V_{a\wedge 0}(f_{n})=0$, 
 applying the same argument as above (see \eqref{InequGra}) we see that there exists a constant $C>0$ (independent of $n$) such that $\|f_{n}\|_{\dot{H}^{1}_{a}}\geq C/m_{2}$.  Using Lemma \ref{KL1}(b) we obtain a sequence $\left\{g_{n}\right\}_{n\in \N}$ such that $V_{a\wedge 0}(g_{n})=0$, $M(g_{n})=m_{1}$  and 
\[E_{a\wedge 0}(g_{n})\leq E_{a\wedge 0}(f_{n})-C\frac{m_{1}-m_{2}}{6m^{2}_{2}}.\]
Since $m_{1}>m_{2}$, by the definition of  $\EE_{a}(m)$ we get $\EE(m_{1})<\EE(m_{2})$.

On the other hand, by using the fact that $\EE_{a}(m)$ is strictly decreasing on $[M(S_{a}), M(Q_{1, a\wedge 0})]$,
Lemma \ref{KL1}(b) and applying the argument in \cite[Theorem~5.2]{KillipOhPoVi2017}, we can show that that the minimization problem $\EE_{a}(m)$ is achieved for $m\in [M(S_{a}), M(Q_{1, a\wedge 0})]$.
Finally, the proof of the lower semicontinuity of $\EE_{a}(m)$ is also similar to that of \cite[Theorem 5.2]{KillipOhPoVi2017}, and so we omit the
details. This completes the proof of theorem.\end{proof}

\begin{corollary}[Comparison of thresholds]\label{Compa22}
Let $a>-\frac{1}{4}$. Then we have the inclusion $\K_{a}\subseteq \K_{0}$.
\end{corollary}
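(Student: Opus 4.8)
The plan is to prove the inclusion $\K_a \subseteq \K_0$ by comparing, point by point, the two constraints defining these regions: the mass threshold and the energy threshold $\EE_a(m) \le \EE_0(m)$. Fix $a > -\tfrac14$. If $a \ge 0$ then $a \wedge 0 = 0$ and the definitions of $\K_a$ and $\K_0$ coincide verbatim, so there is nothing to prove; hence we may assume $a \in (-\tfrac14, 0)$, in which case $a \wedge 0 = a$. The first ingredient is the mass comparison: by Remark~\ref{Compa} we have $M(Q_{1,a}) = \tfrac83 C_{1,a}^{-1} \le \tfrac83 C_{1,0}^{-1} = M(Q_{1,0})$, so any $(m,e) \in \K_a$ automatically satisfies $0 < m < M(Q_{1,0})$, matching the mass constraint in the definition of $\K_0$.

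Next I would establish the energy comparison $\EE_a(m) \le \EE_0(m)$ for all $m > 0$, which is what makes the constraint $0 < e < \EE_a(m)$ imply $0 < e < \EE_0(m)$. The key observation is that since $a \le 0$, we have the pointwise inequality $E_a(f) = \tfrac12\|f\|_{\dot H^1}^2 + \tfrac{a}{2}\int |x|^{-2}|f|^2 - \tfrac14\|f\|_{L^4}^4 + \tfrac16\|f\|_{L^6}^6 \le E_0(f)$ for every $f \in H^1$, because the potential term $\tfrac{a}{2}\int |x|^{-2}|f|^2$ is non-positive. Similarly, $\|f\|_{\dot H^1_a}^2 = \|f\|_{\dot H^1}^2 + a\int|x|^{-2}|f|^2 \le \|f\|_{\dot H^1}^2$, so the virial functional satisfies $V_a(f) \le V_0(f)$ for $f \in H^1$. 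The delicate point is that $\EE_a(m)$ and $\EE_0(m)$ are infima over \emph{different} constraint sets — those with $V_a(f) = 0$ versus $V_0(f) = 0$ — so one cannot directly plug an admissible competitor for one problem into the other. To get around this, I would take any $f$ with $M(f) = m$ and $V_0(f) = 0$; then $V_a(f) \le V_0(f) = 0$. If $V_a(f) = 0$ already, then $f$ is admissible for $\EE_a(m)$ and $\EE_a(m) \le E_a(f) \le E_0(f)$. If $V_a(f) < 0$, then by Lemma~\ref{KL1}(a) there is $s > 1$ with $f^s(x) = s^{3/2}f(sx)$ satisfying $V_a(f^s) = 0$, $M(f^s) = M(f) = m$, and $E_a(f^s) < E_a(f) \le E_0(f)$, so again $\EE_a(m) \le E_0(f)$. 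Taking the infimum over all such $f$ yields $\EE_a(m) \le \EE_0(m)$ (with the convention that this holds trivially when the right side is $+\infty$).

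Combining the two comparisons: if $(m,e) \in \K_a$, then $0 < m < M(Q_{1,a}) \le M(Q_{1,0})$ and $0 < e < \EE_a(m) \le \EE_0(m)$, hence $(m,e) \in \K_0$. This completes the proof.

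I expect the main obstacle to be the constraint-set mismatch in the energy comparison — the naive argument "plug the minimizer of one problem into the other" fails because the vanishing-virial constraints differ with $a$. The resolution, as sketched above, is the rescaling trick from Lemma~\ref{KL1}(a): because $a \le 0$ forces $V_a \le V_0$, a competitor with $V_0 = 0$ has $V_a \le 0$ and can always be rescaled (at no energy cost, in fact with strictly decreased energy) to the slice $V_a = 0$. One should also be slightly careful about the degenerate case where the constraint set for $\EE_0(m)$ is empty (so $\EE_0(m) = \infty$), but then the inequality $\EE_a(m) \le \EE_0(m)$ is vacuous, and the argument goes through unchanged.
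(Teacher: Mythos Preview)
Your proof is correct and follows essentially the same approach as the paper: for $a<0$, use $M(Q_{1,a})\le M(Q_{1,0})$ from Remark~\ref{Compa} for the mass constraint, and for the energy constraint use $V_a\le V_0$ and $E_a\le E_0$ together with the rescaling from Lemma~\ref{KL1}(a) to pass from a competitor with $V_0(f)=0$ to one with $V_a=0$ without increasing energy. The only cosmetic difference is that the paper invokes the existence of a minimizer for $\EE_0(m)$ on $[M(S_0),M(Q_{1,a})]$ (from Theorem~\ref{Vscatte2}) and applies the argument to that single function, whereas you work with an arbitrary admissible $f$ and take the infimum at the end; your version is slightly more self-contained since it bypasses the need for attainment of $\EE_0(m)$.
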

\begin{proof}
If $a\geq 0$, then $\K_{a}=\K_{0}$ by definition. If $a<0$, then by Remark~\ref{Compa} and \eqref{EquaSA} we have
\begin{equation}\label{MS}
M(Q_{1,a})\leq M(Q_{1,0})\quad \text{and}\quad M(S_{a})\leq M(S_{0}).
\end{equation}
Moreover, it is clear that $\EE_{a}(m)\leq \EE_{0}(m)$ when $m\in (0, M(S_{0}))$. Indeed, by Theorem~\ref{Vscatte2}(ii), we have $\EE_{0}(m)=\infty$. 

Next, assume $m\in [M(S_{0}), M(Q_{1,a})]$. Using \eqref{MS} we infer that $\EE_{0}(m)<\infty$. Then  
there exists $f\in H^{1}(\R^{3})$ such that $E_{0}(f)=\EE_{0}(m)$,  $M(f)=m$ and $V_{0}(f)=0$. Since $a<0$, it follows that
\begin{equation}\label{IneqE11}
E_{a}(f)<E_{0}(f)\quad \text{and}\quad V_{a}(f)<V_{0}(f)=0.
\end{equation}
Thus, by Lemma \ref{KL1} we see that there exists $f^{\ast}$ such that 
\begin{equation}\label{IneqE22}
E_{a}(f^{\ast})\leq E_{a}(f),\quad M(f^{\ast})=m \quad  \text{and}\quad V_{a}(f^{\ast})=0.
\end{equation}
Combining \eqref{IneqE11} and \eqref{IneqE22} we get 
\[\EE_{a}(m)\leq E_{a}(f^{\ast}) \leq E_{0}(f)=\EE_{0}(m).
\]
for all  $m\in [M(S_{0}), M(Q_{1,a})]$. This proves the result.
\end{proof}

\begin{remark}\label{IneE}
Let $a\in (-\frac{1}{4}, 0)$. By \eqref{IneqE22}, we can show that $\EE_{a}(m)<\EE_{0}(m)$ if $m\in [M(S_{a}), M(Q_{1,a})]$.  
\end{remark}

We next introduce the functional that will be used to set up the induction scheme for Theorem~\ref{MainTheorem}. For $a>-\frac{1}{4}+\frac{1}{25}$, we define 
\[\Omega_{a}:=\left\{(m,e)\in \R^{2}: m\geq M(S_{a})\quad \mbox{and}\quad e\geq \EE_{a}(m)\right\}\]
and let $\F: H^{1}(\R^{3})\rightarrow [0,\infty)$ be the continuous function
\begin{equation}\label{F-functional}
\F(f):=
\begin{cases} 
E_{a}(f)+\frac{M(f)+E_{a}(f)}{\mbox{dist}\((M(f),E_{a}(f)),\Omega_{a}\)} &  (M(f),E_{a}(f))\notin \Omega_{a}\\
\infty & \mbox{otherwise}.
\end{cases} 
 \end{equation}
Note that if $u$ solves \eqref{NLS}, then $\F(u(t))=\F(u|_{t=0})$ for all $t\in \R$.
Moreover, $\Omega_{a}=\Omega_{0}$ for $a\geq 0$.

\begin{lemma}\label{FunctionF}
Let $f\in H^{1}(\R^{3})$. The function $\F$ satisfies the following properties:
\begin{itemize}
\item[(i)] $0<\F(f)<\infty$ if and only if $(M(f),E_{a}(f))\in \K_{a}$. Moreover, $\F(f)=0$ if and only if $f\equiv 0$.  
\item[(ii)] If $0<\F(f)<\infty$, then  $V_{a}(f)>0$, where $V_{a}$ is as in \eqref{VirialF}.
\item[(iii)] If $M(f_{1})\leq M(f_{2})$ and $E_{a}(f_{1})\leq E_{a}(f_{2})$, then $\F(f_{1})\leq \F(f_{2})$.
\item[(iv)] Let $\F_{0}\in (0, \infty)$. Assume that $\F(f)\leq \F_{0}$, then we have
\begin{equation}\label{Enl}
\|f\|^{2}_{\dot{H}^{1}_{a}} \sim_{\F_{0}} E_{a}(f), \quad \mbox{and} \quad \|f\|^{2}_{{H}^{1}_{a}} \sim_{\F_{0}} 
E_{a}(f)+M(f)\sim_{\F_{0}} \F(f). 
\end{equation}
\item[(v)] Consider $\left\{f_{n}\right\}_{n\in\N}\subset {H}^{1}(\R^{3})$. If $M(f_{n})\leq M_{\ast}$, $E_{a}(f_{n})\leq E_{\ast}$, and $\F(f_{n})\rightarrow \F(M_{\ast}, E_{\ast})$, then 
$(M(f_{n}), E_{a}(f_{n}))\rightarrow (M_{\ast}, E_{\ast})$.
\end{itemize}
\end{lemma}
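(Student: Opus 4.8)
The plan is to prove the five items more or less in the order stated, since each builds on the previous ones, and to lean heavily on the variational results already established in Theorem~\ref{Vscatte2} and the basic structure of the set $\K_a$.

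\textbf{Proof of (i) and (ii).} For (i), I would argue directly from the definition \eqref{F-functional}. If $(M(f),E_a(f))\in\K_a$, then by \eqref{Virial} we have $0<E_a(f)<\EE_a(M(f))$ and $0<M(f)<M(Q_{1,a\wedge 0})$; in particular $(M(f),E_a(f))\notin\Omega_a$ (since $e<\EE_a(m)$ forces the point outside $\Omega_a$), the distance to $\Omega_a$ is a finite positive number, and all terms in the numerator are finite and positive, so $0<\F(f)<\infty$. Conversely, if $0<\F(f)<\infty$, then we are in the first branch, so $\mathrm{dist}((M(f),E_a(f)),\Omega_a)>0$, i.e. the point lies strictly outside $\Omega_a$; combined with $E_a(f)>0$ (which we need from $\F(f)>0$ together with the structure — here one must be slightly careful and observe that $\F(f)>0$ with $M(f),E_a(f)$ not both zero, plus $(m,e)\notin\Omega_a$, pins us into the region $0<m<M(S_a)$ with any $e$, or $M(S_a)\le m<M(Q_{1,a\wedge 0})$ with $0<e<\EE_a(m)$). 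In the first sub-case $\EE_a(m)=\infty$ by Theorem~\ref{Vscatte2}(ii), so automatically $e<\EE_a(m)$ and the point lies in $\K_a$; in the second sub-case the definition of $\K_a$ is met directly. The statement $\F(f)=0\iff f\equiv 0$ follows since $E_a(f)+M(f)>0$ whenever $f\not\equiv 0$ (using \eqref{equiNorms} and positivity of $\L_a$, so $M(f)>0$ already), and the distance term is finite, forcing $\F(f)>0$; and $f\equiv 0$ gives $\F(0)=0$ by inspection. Item (ii) is then immediate: $0<\F(f)<\infty$ gives $(M(f),E_a(f))\in\K_a$ by (i), and then Theorem~\ref{Vscatte2}(i) yields $V_{a\wedge 0}(f)>0$; when $a\le 0$ this is $V_a(f)>0$, and when $a>0$ we have $a\wedge 0=0$ and $V_a(f)\ge V_0(f)=V_{a\wedge 0}(f)>0$ since the $\dot H^1_a$ norm dominates the $\dot H^1$ norm for $a>0$.

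\textbf{Proof of (iii).} This is the monotonicity of $\F$. The key point is that $\Omega_a$ is an ``upper-right'' region: if $(m,e)\in\Omega_a$ and $m'\ge m$, $e'\ge e$, then $(m',e')\in\Omega_a$, because $m'\ge m\ge M(S_a)$ and $e'\ge e\ge\EE_a(m)\ge\EE_a(m')$ using that $\EE_a$ is non-increasing (from Theorem~\ref{Vscatte2}: it is $+\infty$ below $M(S_a)$, strictly decreasing on $[M(S_a),M(Q_{1,a\wedge 0})]$, equal to $d_a(m)$ which is non-increasing for larger $m$). Given this, I would consider two cases. If $(M(f_2),E_a(f_2))\in\Omega_a$ then $\F(f_2)=\infty\ge\F(f_1)$. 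If $(M(f_2),E_a(f_2))\notin\Omega_a$, then since moving down-left keeps us outside $\Omega_a$ (contrapositive of the upper-right property), also $(M(f_1),E_a(f_1))\notin\Omega_a$, and moreover $\mathrm{dist}((M(f_1),E_a(f_1)),\Omega_a)\ge\mathrm{dist}((M(f_2),E_a(f_2)),\Omega_a)$ — this monotonicity of distance-to-an-upper-right-region under down-left translation is a simple geometric fact (any point of $\Omega_a$ is at least as far from the smaller point, coordinate-wise, hence in Euclidean norm after one checks signs). Combined with $E_a(f_1)\le E_a(f_2)$ and $M(f_1)+E_a(f_1)\le M(f_2)+E_a(f_2)$ in the numerator, we get $\F(f_1)\le\F(f_2)$.

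\textbf{Proof of (iv) and (v).} For (iv): from $\F(f)\le\F_0$ and (i) we know $(M(f),E_a(f))\in\K_a$, so $V_{a\wedge 0}(f)>0$ by (ii). The lower bound $\|f\|^2_{\dot H^1_a}\gtrsim_{\F_0}E_a(f)$ is trivial since $E_a(f)\le\tfrac12\|f\|^2_{\dot H^1_a}+\tfrac16\|f\|^6_{L^6}-\tfrac14\|f\|^4_{L^4}+\ldots$ — actually the clean route is the identity \eqref{IdenE} in the form $E_a(f)+\tfrac{3}{32}M(f)=\tfrac12\|f\|^2_{\dot H^1_a}+\tfrac16\int|f|^2(|f|^2-\tfrac34)^2$, giving $\|f\|^2_{\dot H^1_a}\le 2E_a(f)+\tfrac{3}{16}M(f)\lesssim\F(f)\le\F_0$; conversely, since $M(f)<M(Q_{1,a\wedge 0})$ the bound \eqref{IIN}/\eqref{PositiveE} gives $E_a(f)\ge[1-(M(f)/M(Q_{1,a\wedge 0}))^{1/2}]\cdot(\tfrac12\|f\|^2_{\dot H^1_a}+\ldots)$, but this constant degenerates as $M(f)\to M(Q_{1,a\wedge 0})$, so one must instead use that $\F(f)\le\F_0<\infty$ forces $\mathrm{dist}((M(f),E_a(f)),\Omega_a)\ge(M(f)+E_a(f))/\F_0$, which keeps $(M(f),E_a(f))$ quantitatively bounded away from the curve $e=\EE_a(m)$ and from the vertical line $m=M(Q_{1,a\wedge 0})$ — hence $M(f)\le M(Q_{1,a\wedge 0})-c(\F_0)$ whenever $E_a(f)$ is not too large, and in the remaining regime $E_a(f)$ large directly controls $\|f\|^2_{\dot H^1_a}$ via the displayed energy identity. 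The upshot is $\|f\|^2_{\dot H^1_a}\sim_{\F_0}E_a(f)$ and then $\|f\|^2_{H^1_a}\sim M(f)+\|f\|^2_{\dot H^1_a}\sim_{\F_0}M(f)+E_a(f)$; that this is $\sim_{\F_0}\F(f)$ follows because $E_a(f)\le\F(f)\le E_a(f)+\F_0^{-1}\cdot(\text{bounded above by }\mathrm{dist}\ge\ldots)$ — more precisely $\F(f)\ge E_a(f)+M(f)$ always (the distance is bounded by, say, $\mathrm{diam}$ considerations, but more simply $\mathrm{dist}\le$ some fixed constant once $\F(f)\le\F_0$ bounds the point in a compact set), and $\F(f)\le\F_0$ gives the reverse. \textbf{The main obstacle} I anticipate is exactly this quantitative control: extracting from the single scalar bound $\F(f)\le\F_0$ both the two-sided equivalence of norms and, for (v), a genuine convergence statement. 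For (v), the strategy is: the hypotheses $M(f_n)\le M_\ast$, $E_a(f_n)\le E_\ast$, $\F(f_n)\to\F(M_\ast,E_\ast)$ (interpreting $\F(M_\ast,E_\ast)$ as the value of the scalar function $(m,e)\mapsto e+(m+e)/\mathrm{dist}((m,e),\Omega_a)$) force, via lower semicontinuity of $\EE_a$ hence upper semicontinuity of $\mathrm{dist}(\cdot,\Omega_a)$ on the relevant region and continuity of the rest, that any subsequential limit $(m_0,e_0)$ of $(M(f_n),E_a(f_n))$ (which exists by boundedness) has $\F(m_0,e_0)\ge\limsup\F(f_n)=\F(M_\ast,E_\ast)$; but $m_0\le M_\ast$, $e_0\le E_\ast$ and the scalar function is strictly monotone in each variable on the interior of $\K_a$ (the distance term strictly increases as we move down-left, by the geometric fact used in (iii), and $e$ strictly increases), so $\F(m_0,e_0)\le\F(M_\ast,E_\ast)$ with equality only if $(m_0,e_0)=(M_\ast,E_\ast)$; since every subsequence has a further subsequence converging to $(M_\ast,E_\ast)$, the whole sequence converges. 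The delicate point there is the strict monotonicity of $\mathrm{dist}(\cdot,\Omega_a)$ under coordinatewise decrease, which I would prove by noting that the nearest point of $\Omega_a$ to $(m,e)$, if we decrease $(m,e)$ coordinatewise, is strictly farther (the closest point lies on the boundary graph $e=\EE_a(m)$ or on $m=M(S_a)$, and an explicit comparison using that $\EE_a$ is strictly decreasing on $[M(S_a),M(Q_{1,a\wedge 0})]$ shows the distance strictly grows).
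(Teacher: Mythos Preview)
Your arguments for (i)--(iii) are essentially the paper's, and your approach to (v) via subsequential limits and strict monotonicity of the scalar function $(m,e)\mapsto e+(m+e)/\mathrm{dist}((m,e),\Omega_a)$ is correct and in fact more detailed than the paper's one-line justification. One small correction there: the distance to an upper-right set need not be \emph{strictly} monotone under coordinatewise decrease (think of an $\Omega$ bounded by two perpendicular segments). What saves you is that the \emph{numerator} $m+e$ is strictly monotone while the distance is weakly monotone, so the quotient, and hence $\F$, is strictly monotone in each variable; this is enough to force $(m_0,e_0)=(M_\ast,E_\ast)$.

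The real gap is in (iv), in two places. First, your route to $E_a(f)\gtrsim_{\F_0}\|f\|_{\dot H^1_a}^2$ via a case split on the size of $E_a(f)$ is unnecessary and not fully worked out. The paper's trick is cleaner: the point $(M(Q_{1,a\wedge 0}),E_a(f))$ lies in $\Omega_a$ (since $\EE_a(M(Q_{1,a\wedge 0}))=0\le E_a(f)$), so $\mathrm{dist}\le M(Q_{1,a\wedge 0})-M(f)$, whence $\F(f)\ge M(f)/(M(Q_{1,a\wedge 0})-M(f))$. Inverting gives $1-\sqrt{M(f)/M(Q_{1,a\wedge 0})}\ge 1/(2\F(f)+2)$, and now \eqref{PositiveE} yields $E_a(f)\ge \|f\|_{\dot H^1_{a\wedge 0}}^2/(4\F(f)+4)$ directly, with no caveat on $E_a(f)$.

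Second, and more seriously, your claim that ``$\F(f)\le\F_0$ gives the reverse'' for $\F(f)\lesssim_{\F_0}M(f)+E_a(f)$ is not an argument: the bound $\F(f)\le\F_0$ does not by itself control the ratio $\F(f)/(M(f)+E_a(f))$ when $M(f)+E_a(f)$ is small. What is needed is a lower bound on $\mathrm{dist}$ that does not degenerate as $M(f)+E_a(f)\to 0$. The paper obtains this by a dichotomy on $M(f)$: if $4M(f)\ge M(Q_{1,a\wedge 0})$ then $\F(f)\le\F_0\le (4\F_0/M(Q_{1,a\wedge 0}))M(f)\lesssim M(f)+E_a(f)$; if $4M(f)<M(Q_{1,a\wedge 0})$ then $M(f)<M(S_a)$ (recall $M(S_a)=\tfrac{4}{3\sqrt3}M(Q_{1,a\wedge 0})>\tfrac14 M(Q_{1,a\wedge 0})$), so $\mathrm{dist}\ge M(S_a)-M(f)\ge \tfrac12 M(Q_{1,a\wedge 0})$, an absolute positive constant, which gives $\F(f)\lesssim M(f)+E_a(f)$ directly from the definition. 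Without this (or an equivalent) argument, the equivalence $M(f)+E_a(f)\sim_{\F_0}\F(f)$ is not established.
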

\begin{proof}
(i) Suppose that $\F(f)<\infty$. Then by definition $(M(f),E_{a}(f))\notin \Omega_{a}$. 
But then $M(f)<M(Q_{1, a\wedge 0})$ and $E_{a}(f)<\EE_{a}(m)$. We will show that $E_{a}(f)\geq 0$. 
Indeed, from inequality \eqref{PositiveE} we get
\[
E_{a}(f)\geq E_{a\wedge 0}(f)\geq \bigl[1-\(\tfrac{M(f)}{M(Q_{1, a\wedge 0})}\)^{\frac{1}{2}}\bigr]\left\{\tfrac{1}{2}
\|f\|^{2}_{\dot{H}^{1}_{a\wedge 0}}+\tfrac{1}{6}\|f\|^{6}_{L^{6}}\right\}\geq0.
\]
Therefore, $e=E_{a}(f)\geq 0$, and  $\F(f)=0$ if and only if $f\equiv 0$.

(ii) Since $0<\F(f)<\infty$, it follows from (i) that $(M(f),E_{a}(f))\in \K_{a}$. Thus from Theorem~\ref{Vscatte2} we
obtain that
\[
V_{a}(f)\geq V_{a\wedge 0}(f)>0.
\]

(iii) Assume $M(f_{1})\leq M(f_{2})$ and $E_{a}(f_{1})\leq E_{a}(f_{2})$. From Theorem~\ref{Vscatte2} (monotonicity of $\EE_{a}(m)$) we deduce
\begin{equation}\label{Inedis}
\mbox{dist}\( (M(f_{1}), E_{a}(f_{1})), \Omega_{a}\)\geq \mbox{dist}\( (M(f_{2}), E_{a}(f_{2})), \Omega_{a}\).
\end{equation}
Then, by definition of $\F$, we obtain $\F(f_{1})\leq \F(f_{2})$.

(iv) Suppose that $\F(f)\leq \F_{0}$ with $0<\F_{0}<\infty$.  Item (i) implies that 
\[
M(f)<M(Q_{1, a\wedge 0})\qtq{and}0<E_{a}(f)<\EE_{a}(M(f)).
\]
Now we observe that $(M(Q_{1, a\wedge 0}), E_{a}(f))\in \Omega_{a}$. Indeed, $\EE_{a}(m)$ is monotone decreasing with $\EE_{a}(M(Q_{1, a\wedge 0}))=0$. Therefore,
\begin{align*}
  \mbox{dist}\( (M(f), E_{a}(f)), \Omega_{a}\)&\leq \mbox{dist}\( (M(f), E_{a}(f)), (M(Q_{1, a\wedge 0}), E_{a}(f))\)\\
&=M(Q_{1, a\wedge 0})-M(f).
\end{align*}
In particular,
\begin{equation}\label{IneFM}
 \F(f)\geq \tfrac{M(f)}{M(Q_{1, a\wedge 0})-M(f)}.
\end{equation}
As $1-\sqrt{x}\geq \frac{1-x}{2}$ for $x\in[0,1]$, we deduce from \eqref{IneFM} that
\[ 
1-\sqrt{\tfrac{M(f)}{M(Q_{1, a\wedge 0})}}\geq \tfrac{1}{2\F(u)+2},
\]
where we have used that $M(f)<M(Q_{1, a\wedge 0})$. Then by \eqref{PositiveE} we see that
\begin{equation}\label{FsimH}
\begin{aligned}
\F(f)\geq E_{a}(f) \geq E_{a\wedge 0}(f)&\geq  
 \tfrac{1}{2\F(f)+2}\left[\tfrac{1}{2}\|f\|^{2}_{\dot{H}^{1}_{a\wedge 0}}+\tfrac{1}{6}\|f\|^{6}_{L^{6}}\right] \\
&\geq\tfrac{1}{4\F(f)+4}\|f\|^{2}_{\dot{H}^{1}_{a\wedge 0}}.
\end{aligned}
\end{equation}
Moreover, by Sobolev embedding and the equivalence of Sobolev norms we have
\[E_{a}(f)= \tfrac{1}{2}\|f\|^{2}_{\dot{H}^{1}_{a}}+\tfrac{1}{6}\|f\|^{6}_{L^{6}}-\tfrac{1}{4}\|f\|^{4}_{L^{4}}
\lesssim  \|f\|^{2}_{\dot{H}^{1}_{a}} \(1+\F(f)^{2}\).\]
Combining this inequality with \eqref{FsimH}  we obtain 
\[
E_{a}(f)\sim \|f\|^{2}_{\dot{H}^{1}_{a}}
\] 
for every $f$ such that
$\F(f)\leq \F_{0}$. In particular, we deduce that $E_{a}(f)+M(f)\sim \|f\|^{2}_{{H}^{1}_{a}}$.

To complete the proof of (iv), we need to show that $E_{a}(f)+M(f) \sim \F(f)$. To this end, note that if $\frac{4 M(f)}{M(Q_{1, a\wedge 0})}\geq 1$, then, recalling $\F(f)\leq \F_{0}$ and $E_{a}(f)\geq 0$, it follows that 
\[ \F(f)\leq \(\tfrac{4\F_{0}}{M(Q_{1, a\wedge 0})}\) M(f)+E_{a}(f).\]
 On the other hand, if $\frac{4 M(f)}{M(Q_{1, a\wedge 0})}<1$, we have that $M(f)<M(S_{a})$ and therefore
\[ \mbox{dist}\( (M(f), E_{a}(f)), \Omega_{a}\)\geq M(S_{a})-M(f)
=\(\tfrac{4}{3\sqrt{3}}-\tfrac{1}{4}\)M(Q_{1, a\wedge 0})\geq \tfrac{1}{2}M(Q_{1, a\wedge 0}).
\]
By definition of $\F$, we obtain
\[
\F(f)\leq (1+2[M(Q_{1, a\wedge 0})]^{-1})E_{a}(f)+(2[M(Q_{1, a\wedge 0})]^{-1})M(f).
\]
Finally, combining \eqref{IneFM} and \eqref{FsimH} we obtain
\[E_{a}(f)+M(f)\lesssim_{\F_{0}} [1+M(Q_{1, a\wedge 0})]\F(f).\]
Item (v) is now an immediate consequence of the inequality \eqref{Inedis} and the definition of $\F$. This completes the proof of lemma.
\end{proof}

\section{Construction of minimal blowup solutions}\label{Sec:CMBS}

The goal of this section is to prove that if Theorem~\ref{MainTheorem} fails, then we may construct a blowup solution with mass-energy in the region $\mathcal{K}_a$ that is `minimal' in a suitable sense and obeys certain compactness properties.  In the next section, we will utilize a localized virial argument to preclude the possibility of such a solution, thus establishing Theorem~\ref{MainTheorem}. 

\subsection{Linear profile decomposition}\label{Sec:LinearProfile}

We first need a linear profile decomposition associated to the propagator $e^{-it\L_a}$ and adapted to the cubic-quintic problem.  In fact, the result follows by combining the techniques of \cite{KillipMiaVisanZhangZheng, KillipMurphyVisanZheng2017}, which developed concentration-compactness tools to address the NLS with inverse-square potential with either pure cubic or pure quintic nonlinearity, with those of \cite{KillipOhPoVi2017}, which developed concentration-compactness tools adapted to the cubic-quintic problem without potential.  Thus, we will focus on stating the main results and providing suitable references to the analogous results in the references just mentioned. 

First, given a sequence $\left\{x_{n}\right\}\subset \R^{3}$, we define
\begin{equation}\label{DefOperator}
\L^{n}_{a}:=-\Delta+\tfrac{a}{|x+x_{n}|^{2}}
\quad \text{and}\quad 
\L^{\infty}_{a}:=
\begin{cases} 
-\Delta+\tfrac{a}{|x+x_{\infty}|^{2}} & \text{if $x_{n}\to x_{\infty}\in \R^{3}$},\\
-\Delta & \text{if $|x_{n}|\to \infty$}.
\end{cases} 
\end{equation}
In particular, $\L_{a}[\phi(x-x_{n})]=[\L^{n}_{a}\phi](x-x_{n})$, and for any $x_{n}\in \R^{3}$ and $N_{n}>0$,
\[
N^{\frac{1}{2}}_{n} e^{-it\L_{a}}[\phi(N_{n}x-x_{n})]=N^{\frac{1}{2}}_{n}[e^{-iN^{2}_{n}t\L^{n}_{a}}\phi](N_{n}x-y_{n}).
\]

The proof requires several results related to the convergence of the operator $\L_a^n$ to $\L_a^\infty$, all of which we import from \cite{KillipMiaVisanZhangZheng, KillipMurphyVisanZheng2017}: 
\begin{lemma}\label{ConverOpera}
Fix $a>-\frac{1}{4}$. 
\begin{itemize}
\item If $t_{n}\to t_{\infty}\in \R$ and $\left\{x_{n}\right\}\subset \R^{3}$ satisfies $x_{n}\to x_{\infty}$ or $|x_{n}|\to \infty$, then
\begin{align}\label{Conver11}
	&\lim_{n\to\infty}\|\L^{n}_{a}\psi-\L^{\infty}_{a}\psi\|_{\dot{H}^{-1}_{x}}=0 \quad \text{for all $\psi\in \dot{H}^{1}_{x}$},\\
	\label{Conver22}
	&\lim_{n\to\infty}\|(e^{-it_{n}\L^{n}_{a}}-e^{-it_{\infty}\L^{\infty}_{a}})\psi\|_{\dot{H}^{-1}_{x}}=0
	\quad \text{for all $\psi\in \dot{H}^{-1}_{x}$},\\
	\label{Conver33}
	&\lim_{n\to\infty}\|[\sqrt{\L^{n}_{a}}-\sqrt{\L^{\infty}_{a}}]\psi\|_{L^{2}_{x}}=0
	\quad \text{for all $\psi\in \dot{H}^{1}_{x}$}.
\end{align}
If $\frac{2}{q}+\frac{3}{r}=\frac{3}{2}$ with $2<q\leq \infty$, then we have
\begin{equation}\label{Conver44}
\lim_{n\to\infty}\|(e^{-it\L^{n}_{a}}-e^{-it\L^{\infty}_{a}})\psi\|_{L^{q}_{t}L^{r}_{x}(\R\times\R^{3})}=0
	\quad \text{for all $\psi\in L^{2}_{x}$}.
\end{equation}
Finally, if $x_{\infty}\neq 0$, then for any $t>0$,
\begin{equation}\label{Conver55}
\lim_{n\to\infty}\|(e^{-it\L^{n}_{a}}-e^{-it\L^{\infty}_{a}})\delta_{0}\|_{H^{-1}}=0.
\end{equation}
\item Given $\psi\in \dot{H}^{1}_{x}$, $t_{n}\to \pm\infty$ and any sequence 
$\left\{x_{n}\right\}\subset \R^{3}$, we have
\begin{equation}\label{DecayingES1}
\lim_{n\to\infty}\|e^{-it_{n}\L^{n}_{a}}\psi\|_{L^{6}_{x}}=0.
\end{equation}
Moreover, if $\psi\in {H}^{1}_{x}$, then
\begin{equation}\label{DecayingES2}
\lim_{n\to\infty}\|e^{-it_{n}\L^{n}_{a}}\psi\|_{L^{4}_{x}}=0.
\end{equation}
\item Finally, fix $a>-\frac{1}{4}+\frac{1}{25}$. Then for any sequence $\{x_n\}$,
\begin{equation}\label{Convercritical}
\lim_{n\to\infty}\|(e^{-it\L^{n}_{a}}-e^{-it\L^{\infty}_{a}})\psi\|_{L^{10}_{t,x}(\R\times\R^{3})}=0
	\quad \text{for all $\psi\in \dot{H}^{1}_{x}$}.
\end{equation}
\end{itemize}
\end{lemma}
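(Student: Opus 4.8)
The plan is to follow the treatment in \cite{KillipMiaVisanZhangZheng, KillipMurphyVisanZheng2017}, where all of these facts are established; below I outline the structure and indicate where the hypothesis $a>-\frac14+\frac1{25}$ enters.

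The first step is to prove the operator convergence \eqref{Conver11}, from which everything else is bootstrapped. I would write $\L_a^n-\L_a^\infty$ as multiplication by $a\bigl(|x+x_n|^{-2}-|x+x_\infty|^{-2}\bigr)$ when $x_n\to x_\infty$, and by $a|x+x_n|^{-2}$ when $|x_n|\to\infty$. The sharp Hardy inequality shows that $\psi\mapsto |x+y|^{-2}\psi$ is bounded from $\dot H^1_x$ to $\dot H^{-1}_x$ uniformly in $y\in\R^3$, so it suffices to verify the limit on the dense class $\psi\in C^\infty_c(\R^3\setminus\{-x_\infty\})$, where the pairing $\langle(\L_a^n-\L_a^\infty)\psi,\phi\rangle$ converges by dominated convergence; in the divergent case the potential escapes every fixed compact set, and the uniform bound then upgrades the limit to all of $\dot H^1_x$. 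The square-root statement \eqref{Conver33} I would deduce from \eqref{Conver11} via the functional-calculus formula expressing $\sqrt{\L}$ through its resolvent, using the $n$-uniform resolvent and Sobolev-equivalence bounds of Lemma~\ref{EquiSobolev} and \eqref{equiNorms} (the implicit constants depending only on $a$).

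Step two handles the propagator statements \eqref{Conver22}, \eqref{Conver44} and the decay bounds. For \eqref{Conver22} I would use the Duhamel identity
\[
e^{-it\L_a^n}-e^{-it\L_a^\infty}=-i\int_0^{t} e^{-i(t-s)\L_a^n}\bigl(\L_a^n-\L_a^\infty\bigr)e^{-is\L_a^\infty}\,ds,
\]
together with the fact that $e^{-it\L_a^n}$ is unitary on $L^2_x$ and bounded on $\dot H^{\pm1}_x$ uniformly in $n,t$; interpolating to $\dot H^{-1}_x$ and applying \eqref{Conver11} with dominated convergence in $s$ kills the right-hand side, while the elementary limit $e^{-it_n\L_a^\infty}\to e^{-it_\infty\L_a^\infty}$ absorbs the change of times. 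For \eqref{Conver44} the same scheme is combined with the $n$-uniform Strichartz estimates of Lemma~\ref{STRICH} and a density reduction in $L^2_x$. The decay estimates \eqref{DecayingES1}--\eqref{DecayingES2} I would obtain from the dispersive bound for $e^{-it\L_a}$ (in the form of $|t|^{-3/2}$ decay when $a\ge0$, and a suitable weighted analogue when $a<0$) interpolated against $L^2_x$ conservation, extended to all $\psi$ by density using the uniform $H^1_x$-boundedness of $e^{-it_n\L_a^n}$. The estimate \eqref{Conver55} is the special case in which the difference of the two Schr\"odinger kernels applied to $\delta_0$ lands in $H^{-1}$ thanks to high-frequency cancellation and the fact that $x_\infty\neq0$ keeps the potential singularity separated from the point mass; I would cite \cite{KillipMurphyVisanZheng2017} for this.

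Finally, \eqref{Convercritical} is run exactly as \eqref{Conver44} but at $\dot H^1_x$-regularity and in the scattering norm $L^{10}_{t,x}$; this is where the quantitative bound $a>-\frac14+\frac1{25}$ is needed, since one must commute $\sqrt{\L_a^n}$ past the Strichartz estimates and hence invoke $\|\L_a^{s/2}f\|_{L^p_x}\sim\|(-\Delta)^{s/2}f\|_{L^p_x}$ for $p\in[\tfrac65,\tfrac{30}{13}]$ from Lemma~\ref{EquiSobolev}. I expect the main obstacle to be bookkeeping: arranging that all operator norms, Strichartz constants, and local smoothing bounds are \emph{uniform} in $n$ so that the density reductions actually close --- the passages to the limit themselves are routine once that uniformity is in place. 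For complete details I would refer to \cite[Section~3]{KillipMiaVisanZhangZheng} and \cite{KillipMurphyVisanZheng2017}.
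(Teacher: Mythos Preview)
The paper gives no proof of this lemma at all: it is stated as a collection of facts ``all of which we import from \cite{KillipMiaVisanZhangZheng, KillipMurphyVisanZheng2017}.'' Your proposal---to cite those references and sketch the mechanism (Hardy for \eqref{Conver11}, resolvent functional calculus for \eqref{Conver33}, Duhamel plus uniform Strichartz for \eqref{Conver22}, \eqref{Conver44}, \eqref{Convercritical}, and dispersive decay plus density for \eqref{DecayingES1}--\eqref{DecayingES2})---is consistent with, and more detailed than, what the paper actually does. Your identification of where the restriction $a>-\tfrac14+\tfrac1{25}$ enters (the Sobolev equivalence of Lemma~\ref{EquiSobolev} at the exponents needed for $L^{10}_{t,x}$) is correct.
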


The linear profile decomposition is stated as follows: 

\begin{theorem}[Linear profile decomposition]\label{LinearProfi}
Let $\left\{f_{n}\right\}$ be a bounded sequence in $H^{1}(\R^{3})$. Then, up to subsequence, there exist
 $J^{\ast}\in \left\{0,1,2,\ldots\right\}\cup\left\{\infty\right\}$, non-zero profiles 
$\{\phi^{j}\}^{J^{\ast}}_{j=1}\subset \dot{H}^{1}(\R^{3})$ and parameters
\[
\left\{\lambda^{j}_{n}\right\}_{n\in \N}\subset (0,1],\quad 
\left\{t^{j}_{n}\right\}_{n\in \N}\subset \R
\quad \text{and}\quad 
\left\{x^{j}_{n}\right\}_{n\in \N}\subset \R^{3}
\]
so that for each finite $1\leq J\leq J^{\ast}$, we have the decomposition
\begin{equation}\label{Dcom}
f_{n}=\sum^{J}_{j=1}\phi_{n}^{j}+W^{J}_{n},
\end{equation}
where
\begin{equation}\label{fucti}
\phi_{n}^{j}(x):=
\begin{cases} 
[e^{-it^{j}_{n}\L^{n_{j}}_{a}}\phi^{j}](x-x^{j}_{n}), &\mbox{if $\lambda^{j}_{n}\equiv 1$},\\
(\lambda^{j}_{n})^{-\frac{1}{2}}[e^{-it^{j}_{n}\L^{n_{j}}_{a}} P^{a}_{\geq (\lambda^{j}_{n})^{\theta}}\phi^{j}]\( \frac{x-x^{j}_{n}}{\lambda^{j}_{n}}\),
&\mbox{if $\lambda^{j}_{n}\rightarrow 0$},
\end{cases}
\end{equation}
for some $0<\theta<1$ (with $\L^{n_{j}}_{a}$ as in \eqref{DefOperator} corresponding to sequence 
$\{\tfrac{x^{j}_{n}}{\lambda^{j}_{n}}\}$), satisfying
\begin{itemize}
	\item $\lambda^{j}_{n}\equiv 1$ or $\lambda^{j}_{n}\rightarrow 0$ and $t^{j}_{n}\equiv 0$ or $t^{j}_{n}\rightarrow\pm\infty$,
	\item if $\lambda^{j}_{n}\equiv 1$ then $\left\{\phi^{j}\right\}^{J^{\ast}}_{j=1}\subset L_{x}^{2}(\R^{3})$
\end{itemize}
for each $j$.  Furthermore, we have:
\begin{itemize}[leftmargin=5mm]
	\item Smallness of the reminder: 
	\begin{equation}\label{Reminder}
\lim_{J\to J^{\ast}}\limsup_{n\rightarrow\infty}\|e^{-it\L_{a}}W^{J}_{n}\|_{L^{10}_{t,x}(\R\times\R^{3})}=0.
   \end{equation}
		\item Weak convergence property:
		\begin{equation}\label{WeakConver}
e^{it^{j}_{n}\L_{a}}[(\lambda^{j}_{n})^{\frac{1}{2}}W^{J}_{n}(\lambda^{j}_{n}x+x^{j}_{n})]\rightharpoonup 0\quad \mbox{in}\,\,
\dot{H}^{1}_{a}, \quad \mbox{for all $1\leq j\leq J$.}
     \end{equation}
			\item Asymptotic orthogonality: for all $1\leq j\neq k\leq J^{\ast}$
	\begin{equation}\label{Ortho}
\lim_{n\rightarrow \infty}\left[ \frac{\lambda^{j}_{n}}{\lambda^{k}_{n}}+\frac{\lambda^{k}_{n}}{\lambda^{j}_{n}} 
+\frac{|x^{j}_{n}-x^{k}_{n}|^{2}}{\lambda^{j}_{n}\lambda^{k}_{n}}+
\frac{|t^{j}_{n}(\lambda^{j}_{n})^{2}-t^{k}_{n}(\lambda^{k}_{n})^{2}|}{\lambda^{j}_{n}\lambda^{k}_{n}}\right]=\infty.
\end{equation}		
		\item Asymptotic Pythagorean expansions:
		\begin{align}\label{MassEx}
		&\sup_{J}\lim_{n\to\infty}\big[M(f_{n})-\sum^{J}_{j=1}M(\phi^{j}_{n})-M(W^{J}_{n})\big]=0,\\\label{EnergyEx}
		&\sup_{J}\lim_{n\to\infty}\big[E_{a}(f_{n})-\sum^{J}_{j=1}E_{a}(\phi^{j}_{n})-E_{a}(W^{J}_{n})\big]=0.
		\end{align}
	\end{itemize}
\end{theorem}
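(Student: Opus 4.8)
The plan is to carry out the now-standard induction-on-profiles scheme, in the form developed for the cubic-quintic nonlinearity in \cite{KillipOhPoVi2017} and for the inverse-square potential in \cite{KillipMiaVisanZhangZheng, KillipMurphyVisanZheng2017}; the work here is mostly a careful synthesis of these. The governing quantity is the energy-critical Strichartz norm $\|e^{-it\L_a}W^J_n\|_{L^{10}_{t,x}(\R\times\R^3)}$ of the linear evolution of the remainder. Because \eqref{NLS} mixes an $L^2$-subcritical (cubic) and an $\dot H^1$-critical (quintic) nonlinearity, profiles appear at two scales: \emph{unit-scale} profiles ($\lambda^j_n\equiv1$), which live at the $H^1$-level and hence lie in $L^2_x$, and \emph{fine-scale} profiles ($\lambda^j_n\to0$), which live at the $\dot H^1_a$-level and, by scaling, carry no mass asymptotically (a short computation using the Bernstein inequality and the cutoff $P^a_{\geq(\lambda^j_n)^\theta}$ with $\theta<1$ gives $M(\phi^j_n)\lesssim(\lambda^j_n)^{2-2\theta}\to0$, and similarly $\|\phi^j_n\|_{L^4}\to0$).

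First I would set $W^0_n:=f_n$ and proceed inductively. Given $W^{J-1}_n$, if $\lim_n\|e^{-it\L_a}W^{J-1}_n\|_{L^{10}_{t,x}}=0$ we stop and put $J^\ast:=J-1$; otherwise we extract a profile via an \emph{inverse Strichartz inequality} adapted to $\L_a$: if $\{g_n\}$ is bounded in $H^1$ and $\liminf_n\|e^{-it\L_a}g_n\|_{L^{10}_{t,x}}=:\eps>0$, then, after passing to a subsequence, there are a nonzero $\phi\in\dot H^1$ and parameters $\lambda_n\in(0,1]$ (either $\equiv1$ or $\to0$), $t_n$ (either $\equiv0$ or $\to\pm\infty$) and $x_n\in\R^3$ such that
\[
e^{it_n\L_a}\bigl[\lambda_n^{1/2}g_n(\lambda_n\,\cdot+x_n)\bigr]\rightharpoonup\phi\qquad\text{weakly in }\dot H^1_a,
\]
with a quantitative lower bound $\|\phi\|_{\dot H^1}\gtrsim\eps^{\beta}$ for a fixed power $\beta$ (also depending on $\sup_n\|g_n\|_{H^1}$), and with the extra information that when $\lambda_n\equiv1$ one has $\phi\in L^2_x$ together with a lower bound on $\|\phi\|_{L^2}$ coming from the $L^4_x$ contribution. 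The limiting operator entering $\phi^j_n$ is identified using Lemma~\ref{ConverOpera} (specifically \eqref{Conver22}, \eqref{Conver44}, \eqref{Convercritical}), according to whether the rescaled points $x_n/\lambda_n$ stay bounded or escape to infinity. Applying this with $g_n=W^{J-1}_n$ yields $\phi^J$ and parameters; we then define $\phi^J_n$ by \eqref{fucti} (inserting $P^a_{\geq(\lambda^J_n)^\theta}$ in the fine-scale case so that $\phi^J_n\in H^1$ with controlled low frequencies) and set $W^J_n:=W^{J-1}_n-\phi^J_n$. By construction the weak convergence \eqref{WeakConver} holds for $j=J$, and a standard argument propagates it to all $j\le J$.

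It then remains to verify the stated properties. Asymptotic orthogonality \eqref{Ortho} is the usual consequence of \eqref{WeakConver}: if some pair of parameters failed to diverge, one passes to a subsequence along which the symmetries nearly coincide and concludes that the later profile vanishes, a contradiction. For the mass expansion \eqref{MassEx} one uses that fine-scale profiles have vanishing mass and that $L^2_x$-orthogonality of the unit-scale profiles with divergent parameters yields the decoupling, via a Rellich-type compactness argument as in \cite{KillipOhPoVi2017} (translations and time-translations to infinity force weak-$L^2$ convergence to zero). The energy expansion \eqref{EnergyEx} is obtained term by term: the $\|\cdot\|_{\dot H^1_a}^2$ piece decouples using \eqref{Ortho} together with \eqref{Conver11} and \eqref{Conver33} to replace $\L^n_a$ by $\L^\infty_a$; the $\|\cdot\|_{L^6}^6$ and $\|\cdot\|_{L^4}^4$ pieces decouple by a Brezis--Lieb / refined Fatou argument, with \eqref{DecayingES1} and \eqref{DecayingES2} eliminating the contributions of profiles with $t^j_n\to\pm\infty$ and scaling eliminating the $L^4$ contribution of the fine-scale profiles. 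Finally, the smallness \eqref{Reminder} follows by combining the lower bounds $\|\phi^J\|_{\dot H^1}\gtrsim\eps^{\beta}$ (and, for unit-scale profiles, on $\|\phi^J\|_{L^2}$) with the near-summability of $\sum_j\bigl(M(\phi^j_n)+E_a(\phi^j_n)\bigr)$ coming from \eqref{MassEx}--\eqref{EnergyEx} and the $H^1$-boundedness of $\{f_n\}$, after interpolating the $L^{10}_{t,x}$ norm of the linear remainder against a weaker critical norm controlled by the extraction procedure.

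The main obstacle is the inverse Strichartz inequality in this combined setting, where one must simultaneously handle the two scales of profiles and the three limiting geometries for the potential — a fine-scale bubble concentrating at the origin (where $\L_a$ survives), a fine-scale bubble far from the origin relative to its length scale (where the potential degenerates to $-\Delta$), and a unit-scale bubble escaping to spatial infinity (where again the potential disappears). Pinning down the correct renormalization, including the cutoff $P^a_{\geq(\lambda_n)^\theta}$ and the identification of the weak limit through Lemma~\ref{ConverOpera}, together with the quantitative norm lower bounds, while keeping track of which refined Strichartz and Sobolev-equivalence estimates for $\L_a$ are available — the $L^{10}$-level statements such as \eqref{Convercritical} and Lemma~\ref{EquiSobolev} require $a>-\tfrac14+\tfrac1{25}$ — is the technical heart of the argument. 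Each ingredient has a direct analogue in \cite{KillipOhPoVi2017, KillipMiaVisanZhangZheng, KillipMurphyVisanZheng2017}.
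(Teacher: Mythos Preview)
Your proposal is correct and follows essentially the same approach as the paper: the paper's own argument consists precisely of stating the refined Strichartz estimate (Lemma~\ref{RefinedStrichartz}), the inverse Strichartz inequality (Proposition~\ref{InverseSI}) together with the scale/decoupling reductions (Lemma~\ref{ScalingPara}), and then invoking the standard inductive extraction scheme from \cite{KillipOhPoVi2017, KillipMiaVisanZhangZheng}, which is exactly the mechanism you describe. If anything, your outline spells out more of the induction and the verification of \eqref{MassEx}--\eqref{EnergyEx} and \eqref{Reminder} than the paper does, and your identification of the three limiting geometries and the role of the cutoff $P^a_{\geq(\lambda_n)^\theta}$ matches the paper's setup.
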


The first step is the following refined Strichartz estimate (see \cite[Lemma 3.6]{KillipMiaVisanZhangZheng}).

\begin{lemma}[Refined Strichartz]\label{RefinedStrichartz}
Let $a>-\frac{1}{4}+\frac{1}{25}$. For $f\in \dot{H}^{1}_{a}(\R^{3})$ we have
\[
\| e^{-it\L_{a}}f\|_{L^{10}_{t,x}(\R\times\R^{3})}\lesssim
\|  f \|^{\frac{1}{5}}_{\dot{H}^{1}_{a}(\R^{3})}\sup_{N\in 2^{\Z}}
\| e^{-it\L_{a}}f_{N}\|^{\frac{4}{5}}_{L^{10}_{t,x}(\R\times\R^{3})}.
\]
\end{lemma}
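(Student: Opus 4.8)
The plan is to prove the estimate by interpolating a broken-down version of the standard Strichartz estimate against a gain coming from the bilinear/orthogonality structure of the propagator $e^{-it\L_a}$, exactly as in the free-Laplacian case but with the Littlewood--Paley theory adapted to $\L_a$ (the square function estimate and Bernstein inequalities recalled above) in place of the usual one. First I would write $f=\sum_{N\in 2^{\Z}} f_N$ using the $\L_a$-adapted Littlewood--Paley decomposition, so that $e^{-it\L_a}f = \sum_N e^{-it\L_a}f_N$. The target $L^{10}_{t,x}$ norm will be estimated by duality or directly by expanding $|e^{-it\L_a}f|^{10}$; the cleanest route is to raise to a suitable power and exploit almost-orthogonality of the pieces $e^{-it\L_a}f_N$, $e^{-it\L_a}f_M$ when $N\ll M$ or $N\gg M$.

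The key steps, in order, are: (1) reduce to controlling $\|e^{-it\L_a}f\|_{L^{10}_{t,x}}^{10}$ and expand it as a sum over frequency pairs (or quadruples, matching the exponent structure), isolating the diagonal-in-frequency contributions from the genuinely off-diagonal ones; (2) for the off-diagonal terms, prove a bilinear estimate of the form $\|(e^{-it\L_a}f_N)(e^{-it\L_a}f_M)\|_{L^{5}_{t,x}} \lesssim (\tfrac{N\wedge M}{N\vee M})^{\sigma}\|f_N\|_{\dot H^1_a}\|f_M\|_{\dot H^1_a}$ for some $\sigma>0$ — this is where one uses the dispersive/Strichartz machinery for $e^{-it\L_a}$ together with a frequency-gap gain, and it is the analogue of the bilinear Strichartz estimate underlying the free refined Strichartz inequality; (3) sum the resulting geometric series in the frequency ratio to reduce the whole expression to $\big(\sum_N \|f_N\|_{\dot H^1_a}^2\big)$-type quantities times $\sup_N \|e^{-it\L_a}f_N\|_{L^{10}_{t,x}}$; (4) apply the square function estimate (with $s=1$) to identify $\big(\sum_N \|f_N\|_{\dot H^1_a}^2\big)^{1/2} \sim \|f\|_{\dot H^1_a}$, and Bernstein/H\"older to book-keep the exponents so that exactly the weights $\|f\|_{\dot H^1_a}^{1/5}$ and $\sup_N\|e^{-it\L_a}f_N\|_{L^{10}_{t,x}}^{4/5}$ emerge.

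I expect the main obstacle to be step (2): establishing the bilinear estimate with an honest power-gain in the frequency ratio for the propagator $e^{-it\L_a}$, since the inverse-square potential destroys the exact translation invariance and the clean Fourier-support structure that make the free bilinear Strichartz estimate transparent. The right way around this is to cite the corresponding result already developed for $e^{-it\L_a}$ in \cite{KillipMiaVisanZhangZheng} (or to derive it from the equivalence of Sobolev spaces in Lemma~\ref{EquiSobolev}, which transfers frequency-localized estimates between $-\Delta$ and $\L_a$ in the relevant range $\tfrac{6}{5}\le p\le\tfrac{30}{13}$, combined with the free bilinear estimate for the Littlewood--Paley pieces whose kernels enjoy good spatial localization away from the origin). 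In fact, since the statement is quoted verbatim as \cite[Lemma~3.6]{KillipMiaVisanZhangZheng}, the honest "proof" here is to observe that nothing in the argument there uses the specific nonlinearity, so the estimate applies directly; I would phrase the proof as a reduction to that reference, indicating the two ingredients (square function estimate and the bilinear refinement) that make it work in the present setting.
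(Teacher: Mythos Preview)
Your proposal is correct and matches the paper's treatment: the paper does not supply a proof at all but simply imports the statement from \cite[Lemma~3.6]{KillipMiaVisanZhangZheng}, exactly as you anticipate in your final paragraph. One minor overstatement worth flagging: the ``bilinear'' gain you worry about in step~(2) is, in the proof of \cite[Lemma~3.6]{KillipMiaVisanZhangZheng}, obtained purely from H\"older, Bernstein (for $\L_a$), and the \emph{linear} Strichartz estimate of Lemma~\ref{STRICH} applied to the lowest- and highest-frequency pieces --- no genuine bilinear Strichartz estimate for $e^{-it\L_a}$ is needed, so the lack of translation invariance is not actually an obstacle.
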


Using this estimate and combining the arguments of \cite[Proposition 3.7]{KillipMiaVisanZhangZheng} 
and \cite[Proposition 7.2]{KillipOhPoVi2017}, we can extract single bubbles of concentration as follows:
\begin{proposition}[Inverse Strichartz inequality]\label{InverseSI}
Let $a>-\frac{1}{4}+\frac{1}{25}$. Let $\left\{f_{n}\right\}_{n\in \N}$ be a sequence such that
\[
\limsup_{n\to\infty}\|  f_{n} \|_{{H}^{1}_{a}}=A<\infty
\quad \text{and}\quad 
\liminf_{n\to\infty}\| e^{-it\L_{a}}f\|_{L^{10}_{t,x}(\R\times\R^{3})}=\epsilon>0.
\]
Then, after passing to a subsequence in $n$, there exist $\phi\in \dot{H}^{1}_{x}$,
\[
\left\{\lambda_{n}\right\}_{n\in \N}\subset (0,\infty), \quad\left\{t_{n}\right\}_{n\in \N}\subset \R,
\quad \left\{x_{n}\right\}_{n\in \N}\subset \R^{3}
\]
such that the following statements hold:
\begin{enumerate}[label=\rm{(\roman*)}]
	\item $\lambda_{n}\to \lambda_{\infty}\in [0, \infty)$, and if $\lambda_{\infty}>0$ then
	$\phi\in{H}^{1}_{x}$.
	\item Weak convergence property:
\begin{equation}\label{weakprofile}
\lambda^{\frac{1}{2}}_{n}( e^{-it\L_{a}}f_{n})(\lambda_{n}x+x_{n})\rightharpoonup \phi(x)
\quad \text{weakly in}\quad 
\begin{cases} 
H^{1}(\R^{3}), \quad \text{if $\lambda_{\infty}>0$} \\
\dot{H}^{1}(\R^{3}), \quad \text{if $\lambda_{\infty}=0$}.
\end{cases} 
\end{equation}
\item Decoupling of norms:
\begin{align}\label{ConverH1}
&	\lim_{n\to\infty}\left\{\|f_{n}\|^{2}_{\dot{H}^{1}_{a}}-\|f_{n}-\phi_{n}\|^{2}_{\dot{H}^{1}_{a}}\right\}
\gtrsim_{\epsilon,A} 1\\\label{ConverLp}
&\lim_{n\to\infty}\left\{\|f_{n}\|^{2}_{L_{x}^{2}}-\|f_{n}-\phi_{n}\|^{2}_{L_{x}^{2}}-\|\phi_{n}\|^{2}_{L_{x}^{2}}\right\}=0,
\end{align}
where {\small
\[
\phi_{n}(x):=\begin{cases} 
\lambda^{-\frac{1}{2}}_{n}e^{-it_{n}\L_{a}}\left[\phi\(\frac{x-x_{n}}{\lambda_{n}}\)\right] 
& \text{if $\lambda_{\infty}>0$}, \\
 \lambda^{-\frac{1}{2}}_{n}e^{-it_{n}\L_{a}}\left[(P^{a}_{\geq \lambda^{\theta}_{n}}\phi)\(\frac{x-x_{n}}{\lambda_{n}}\)\right]                 
& \text{if $\lambda_{\infty}=0$},
\end{cases} 
\]
with $0<\theta<1$.
}
\item We may choose the parameters $\left\{\lambda_{n}\right\}_{n\in \N}$, $\left\{t_{n}\right\}_{n\in \N}$  and $\left\{x_{n}\right\}_{n\in \N}$ such that either 
$\frac{t_{n}}{\lambda^{2}_{n}}\to \pm \infty$ or $t_{n}\equiv 0$ and either $\frac{|x_{n}|}{\lambda_{n}}\to  \infty$ 
or $x_{n}\equiv 0$.
\end{enumerate}
\end{proposition}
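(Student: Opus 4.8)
The plan is to follow closely the proofs of \cite[Proposition~3.7]{KillipMiaVisanZhangZheng} and \cite[Proposition~7.2]{KillipOhPoVi2017}, combining the treatment of the broken scaling symmetry from the former with the $H^1$-versus-$\dot{H}^1$ bookkeeping for the cubic-quintic problem from the latter. \emph{Step 1: extracting a frequency, a time, and a center.} By the refined Strichartz estimate (Lemma~\ref{RefinedStrichartz}) and the hypothesis $\liminf_n\|e^{-it\L_a}f_n\|_{L^{10}_{t,x}}=\epsilon$, after passing to a subsequence there is $N_n\in 2^{\Z}$ with $\|e^{-it\L_a}P^a_{N_n}f_n\|_{L^{10}_{t,x}}\gtrsim_{\epsilon,A}1$. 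Since $\|e^{-it\L_a}P^a_Nf_n\|_{L^{10}_{t,x}}\lesssim\|P^a_Nf_n\|_{\dot{H}^1_a}\lesssim N\|f_n\|_{L^2_x}\lesssim NA$ (Strichartz estimates, Bernstein, and the $L^2$ part of the $H^1_a$-bound), the $N_n$ are bounded from below; passing to a subsequence we may assume $\lambda_n:=N_n^{-1}\to\lambda_\infty\in[0,\infty)$. Interpolating the $\dot{H}^1$-admissible norm $\|\cdot\|_{L^{10}_{t,x}}$ of $e^{-it\L_a}P^a_{N_n}f_n$ against $\|\cdot\|_{L^\infty_{t,x}}$ and a lower-order Strichartz norm bounded by $\lesssim A$, and running a standard concentration argument, yields $t_n\in\R$ and $x_n\in\R^3$ with $|(e^{-it_n\L_a}P^a_{N_n}f_n)(x_n)|\gtrsim_{\epsilon,A}N_n^{1/2}$.

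\emph{Step 2: the profile and its non-triviality.} Let $R_n$ be the recentered rescaling $R_ng(x):=\lambda_n^{1/2}g(\lambda_n x+x_n)$; it is an isometry from $\dot{H}^1_a$ onto $\dot{H}^1_{\L_a^n}$, where $\L_a^n$ is the operator \eqref{DefOperator} attached to $\{x_n/\lambda_n\}$, and it conjugates $e^{-it_n\L_a}$ to $e^{-i(t_n/\lambda_n^2)\L_a^n}$. Put $h_n:=R_ne^{it_n\L_a}f_n$; it is bounded in $\dot{H}^1_{\L_a^n}$, hence in $\dot{H}^1_x$ by \eqref{equiNorms}, and in $H^1_x$ precisely when $\lambda_\infty>0$ (since $\|h_n\|_{L^2}=\lambda_n^{-1}\|f_n\|_{L^2}$). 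Passing to a subsequence, $h_n\rightharpoonup\phi$ weakly in $\dot{H}^1_x$ (in $H^1_x$ if $\lambda_\infty>0$), giving (i) and \eqref{weakprofile}. For $\phi\neq 0$: rewrite the pointwise bound of Step~1, via duality and the smoothing of $P^a_{N_n}$, as $|\langle h_n,\Psi_n\rangle|\gtrsim_{\epsilon,A}1$, where $\Psi_n$ is a unit-scale, unit-frequency wave packet determined by $\L_a^n$ and $t_n/\lambda_n^2$; by Lemma~\ref{ConverOpera} --- whose limiting operator $\L_a^\infty$, and hence whether $\phi$ must lie in $H^1$ or only $\dot{H}^1$, is governed by whether $x_n/\lambda_n$ stays bounded or escapes to $\infty$ --- and along a subsequence with $t_n/\lambda_n^2$ convergent or diverging, the $\Psi_n$ converge to a fixed non-zero $\Psi_\infty$, so $|\langle\phi,\Psi_\infty\rangle|\gtrsim_{\epsilon,A}1$.

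\emph{Step 3: norm decoupling and parameter normalization.} With $\phi_n$ as in the statement --- the truncation $P^a_{\geq\lambda_n^\theta}$ in the case $\lambda_\infty=0$ being there precisely so that $\phi_n\in H^1_a$ --- the identity $\|f_n\|_{\dot{H}^1_a}^2=\|f_n-\phi_n\|_{\dot{H}^1_a}^2+\|\phi_n\|_{\dot{H}^1_a}^2+2\RE\langle f_n-\phi_n,\phi_n\rangle_{\dot{H}^1_a}$ reduces \eqref{ConverH1} to $\langle f_n-\phi_n,\phi_n\rangle_{\dot{H}^1_a}\to 0$ and $\|\phi_n\|_{\dot{H}^1_a}^2\to\|\phi\|_{\dot{H}^1_x}^2\gtrsim_{\epsilon,A}1$. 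The first transports along $R_ne^{it_n\L_a}$ into $\langle h_n-\phi,\phi\rangle_{\dot{H}^1_{\L_a^n}}$, which tends to $0$ by $h_n\rightharpoonup\phi$ together with \eqref{Conver11} (to pass from $\L_a^n$ to $\L_a^\infty$); the second uses \eqref{Conver33}, plus $P^a_{\geq\lambda_n^\theta}\phi\to\phi$ in $\dot{H}^1$ when $\lambda_\infty=0$. The $L^2$ decoupling \eqref{ConverLp} is identical when $\lambda_\infty>0$; when $\lambda_\infty=0$ one notes instead $\|\phi_n\|_{L^2}\lesssim\lambda_n^{1-\theta}\|\phi\|_{\dot{H}^1}\to 0$ and $\langle f_n,\phi_n\rangle_{L^2}\to 0$. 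Finally (iv): pass to a subsequence along which $t_n/\lambda_n^2$ and $|x_n|/\lambda_n$ each converge or diverge to $\infty$; in the convergent cases absorb the finite limits into $\phi$ (using \eqref{Conver22} for the time shift) and into a relabeling of $x_n$, producing $t_n\equiv 0$, $x_n\equiv 0$ respectively, consistently with Steps~2--3.

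\emph{The main obstacle.} The one genuinely delicate point --- and the reason Lemma~\ref{ConverOpera} is invoked throughout --- is that under $R_n$ the potential $a|x|^{-2}$ turns into $a|x+x_n/\lambda_n|^{-2}$, a potential with a \emph{moving} singularity, so that neither $\sqrt{\L_a}$ nor $e^{-it\L_a}$ is exactly covariant under rescaling. One must therefore perform the weak-limit extraction and all norm-decouplings using the family $\L_a^n$ and its limit $\L_a^\infty$, carefully separating the regime $x_n/\lambda_n\to x_\infty$ (a shifted inverse-square operator in the limit, $\phi\in H^1$) from the regime $|x_n/\lambda_n|\to\infty$ (the free Laplacian in the limit, $\phi$ only in $\dot{H}^1$). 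Together with the $H^1$/$\dot{H}^1$ bookkeeping inherited from the cubic-quintic structure, this is the part requiring care; everything else proceeds exactly as in \cite{KillipMiaVisanZhangZheng, KillipOhPoVi2017}.
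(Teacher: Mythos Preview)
Your proposal is correct and takes essentially the same approach as the paper, which does not give a detailed proof but simply refers to combining \cite[Proposition~3.7]{KillipMiaVisanZhangZheng} with \cite[Proposition~7.2]{KillipOhPoVi2017}; your sketch fleshes out precisely that combination. One small imprecision: in your ``main obstacle'' paragraph you conflate the dichotomy governing whether $\phi\in H^1$ (which is $\lambda_\infty>0$ versus $\lambda_\infty=0$) with the dichotomy governing the limiting operator $\L_a^\infty$ (which is $|x_n/\lambda_n|$ bounded versus $|x_n/\lambda_n|\to\infty$); these are independent, but this does not affect the argument.
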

Arguing as in \cite[Corollary 7.3 (i)]{KillipOhPoVi2017}, \cite[Proposition 3.7]{KillipMiaVisanZhangZheng}, and \cite[Lemma 7.4]{KillipOhPoVi2017}, we also have the following: 
\begin{lemma}\label{ScalingPara}
Under the hypotheses of Proposition \ref{InverseSI}, we have:
\begin{enumerate}[label=\rm{(\roman*)}]
\item Passing to subsequence, we may assume that either $\lambda_{n}\equiv1$ or $\lambda_{n}\to0$.
\item \begin{align}\label{ConverL6}
&\lim_{n\to\infty}\left\{\|f_{n}\|^{6}_{L_{x}^{6}}-\|f_{n}-\phi_{n}\|^{6}_{L_{x}^{6}}-\|\phi_{n}\|^{6}_{L_{x}^{6}}\right\}=0,\\\label{ConverL4}
&\lim_{n\to\infty}\left\{\|f_{n}\|^{4}_{L_{x}^{4}}-\|f_{n}-\phi_{n}\|^{4}_{L_{x}^{4}}-\|\phi_{n}\|^{4}_{L_{x}^{4}}\right\}=0.
\end{align}
\end{enumerate}
\end{lemma}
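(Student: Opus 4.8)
The plan is to follow the arguments of \cite[Corollary~7.3, Lemma~7.4]{KillipOhPoVi2017} and \cite[Proposition~3.7]{KillipMiaVisanZhangZheng}, adapting them to the operator $\L_a$; the only genuinely new issue is the bookkeeping forced on us by the fact that neither scaling nor translation is a symmetry of $\L_a$. For part (i), recall from Proposition~\ref{InverseSI}(i) that $\lambda_n\to\lambda_\infty\in[0,\infty)$. If $\lambda_\infty=0$ there is nothing to prove. If $\lambda_\infty>0$, then $\phi\in H^1(\R^3)$, and writing $\lambda_n=\lambda_\infty\mu_n$ with $\mu_n\to1$ I would reparametrize the bubble $\phi_n$ using the scaling relation $e^{-it\L_a}[g((\cdot-x_n)/\lambda_n)]=[e^{-i(t/\lambda_n^2)\L_a^n}g]((\cdot-x_n)/\lambda_n)$, with $\L_a^n$ as in \eqref{DefOperator} for the sequence $\{x_n/\lambda_n\}$: up to an error tending to $0$ in $H^1$, $\phi_n$ coincides with the unit-scale bubble built from the profile $\lambda_\infty^{-1/2}\phi(\cdot/\lambda_\infty)$, translation $x_n$, and time $t_n/\lambda_\infty^2$. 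Here one uses the continuity of dilations on $H^1$ (so that dilation by $\mu_n\to1$ is harmless) together with Lemma~\ref{ConverOpera} to pass between the shifted operators attached to $\{x_n/\lambda_n\}$ and $\{x_n/\lambda_\infty\}$; the weak convergence \eqref{weakprofile} survives because composition with dilations converging to the identity preserves weak $\dot H^1_x$ (resp.\ $H^1$) convergence. After this reparametrization we may take $\lambda_n\equiv1$.

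For part (ii) I would split according to Proposition~\ref{InverseSI}(iv). \emph{Case $t_n\to\pm\infty$.} Writing $t_n':=t_n/\lambda_n^2$ (which still tends to $\pm\infty$) and $h_n:=\phi$ if $\lambda_n\equiv1$, $h_n:=P^a_{\geq\lambda_n^\theta}\phi$ if $\lambda_n\to0$, the $L^6$-invariance of the rescaling gives $\|\phi_n\|_{L^6}=\|e^{-it_n'\L_a^n}h_n\|_{L^6}$; since $h_n\to\phi$ in $\dot H^1_x$, the decay estimate \eqref{DecayingES1} applied to the fixed function $\phi$, together with the uniform-in-$t$ Sobolev bound $\|e^{-it\L_a^n}\psi\|_{L^6_x}\lesssim\|\psi\|_{\dot H^1_x}$, yields $\|\phi_n\|_{L^6}\to0$. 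For the $L^4$ norm, in the unit-scale case $\|\phi_n\|_{L^4}=\|e^{-it_n'\L_a^n}\phi\|_{L^4}\to0$ by \eqref{DecayingES2} (valid since $\phi\in H^1$), while in the concentrating case one has directly $\|\phi_n\|_{L^4}^4\lesssim\lambda_n^{1-\theta}\|\phi\|_{\dot H^1_x}^4\to0$, using the Bernstein bound $\|P^a_{\geq N}\phi\|_{L^2}\lesssim N^{-1}\|\phi\|_{\dot H^1_x}$, interpolation with $L^6_x$, and the scaling of the $L^4$ norm. Granted $\|\phi_n\|_{L^6}\to0$ and $\|\phi_n\|_{L^4}\to0$, the identities \eqref{ConverL6} and \eqref{ConverL4} reduce to $\|f_n\|_{L^p}^p-\|f_n-\phi_n\|_{L^p}^p\to0$ for $p\in\{4,6\}$, which follows from the elementary pointwise bound $\big||a+b|^p-|a|^p\big|\lesssim|b|\bigl(|a|^{p-1}+|b|^{p-1}\bigr)$, H\"older, Sobolev embedding, and the uniform bound $\sup_n\|f_n\|_{H^1_a}\leq A$.

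\emph{Case $t_n\equiv0$.} If also $\lambda_n\to0$, then as above $\|\phi_n\|_{L^2}\lesssim\lambda_n^{1-\theta}\to0$ and $\|\phi_n\|_{L^4}\lesssim\lambda_n^{(1-\theta)/4}\to0$, so \eqref{ConverL4} follows exactly as before and only \eqref{ConverL6} carries new content. Since $t_n\equiv0$, the rescaled data $g_n(x):=\lambda_n^{1/2}f_n(\lambda_n x+x_n)$ converge weakly to $\phi$ in $\dot H^1_x$ (this is \eqref{weakprofile} at $t=t_n=0$), and $\lambda_n^{1/2}(f_n-\phi_n)(\lambda_n x+x_n)=g_n-P^a_{\geq\lambda_n^\theta}\phi$, where $P^a_{\geq\lambda_n^\theta}\phi\to\phi$ in $\dot H^1_x\hookrightarrow L^6_x$. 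Passing to a further subsequence — using that $\{g_n\}$ is bounded in $H^1$ on every ball, Rellich's theorem, and a diagonal argument — I may assume $g_n\to\phi$ a.e.; since $\{g_n\}$ is bounded in $L^6_x$ by Sobolev embedding, the Brezis--Lieb lemma gives
\[
\lim_{n\to\infty}\Bigl(\|g_n\|_{L^6_x}^6-\|g_n-\phi\|_{L^6_x}^6-\|\phi\|_{L^6_x}^6\Bigr)=0,
\]
and undoing the ($L^6$-invariant) rescaling yields \eqref{ConverL6}. If instead $\lambda_n\equiv1$ (so $\lambda_\infty>0$ and $\phi\in H^1$), the same Brezis--Lieb argument applied to $g_n:=f_n(\cdot+x_n)\rightharpoonup\phi$ in $H^1$ gives both \eqref{ConverL6} and \eqref{ConverL4}, now using the embedding $H^1(\R^3)\hookrightarrow L^4\cap L^6$.

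The step I expect to be the main obstacle is the bookkeeping in the concentrating regime $\lambda_n\to0$: the Littlewood--Paley projections $P^a_{\geq\lambda_n^\theta}$ are defined through the original operator $\L_a$, whereas after rescaling and translating the natural operator is the shifted one $\L_a^n$, so identifying the weak limit of $g_n$ and justifying $P^a_{\geq\lambda_n^\theta}\phi\to\phi$ in $\dot H^1_x$ must be carried out through the operator-convergence estimates of Lemma~\ref{ConverOpera}; the reparametrization in part (i) is delicate for the same reason. Once these points are in place, the $L^p$-decouplings follow from the Brezis--Lieb lemma and the dispersive decay estimates \eqref{DecayingES1}--\eqref{DecayingES2} exactly as in the cited works.
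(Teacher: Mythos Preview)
Your proposal is correct and follows precisely the approach of the references the paper cites (the paper itself gives no proof beyond pointing to \cite[Corollary~7.3(i), Lemma~7.4]{KillipOhPoVi2017} and \cite[Proposition~3.7]{KillipMiaVisanZhangZheng}). One small correction: scaling \emph{is} a symmetry of $\L_a$ (the inverse-square potential scales exactly like the Laplacian); only translation is broken, so the reparametrization in part~(i) is in fact simpler than you suggest, and your worry about the projections $P^a_{\geq\lambda_n^\theta}$ is not a genuine obstacle---the convergence $P^a_{\geq\lambda_n^\theta}\phi\to\phi$ in $\dot H^1_x$ follows directly from the spectral theorem and dominated convergence, independently of any operator-convergence statement.
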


With Proposition~\ref{InverseSI} and Lemma~\ref{ScalingPara} in place, the proof of Theorem~\ref{LinearProfi} then follows as in \cite[Theorem 7.5]{KillipOhPoVi2017}  (see also \cite[Theorem 3.1]{KillipMiaVisanZhangZheng} for a similar result in the energy-critical case).

%

\subsection{Embedding nonlinear profiles}\label{Sec:embedding}

In this section we construct scattering solutions to \eqref{NLS} associated to profiles $\phi_n$ living either at small length scales (i.e. in the regime $\lambda_n\to 0$) or far from the origin relative to their length scale (i.e. in the regime $|\tfrac{x_{n}}{\lambda_{n}}|\to \infty$), or both.  The challenge lies in the fact that the translation and scaling symmetries in \eqref{NLS} are broken by the potential and the double-power nonlinearity, respectively.  In particular, we must consider several limiting regimes and use approximation by a suitable underlying model in each case.  The basic idea is that if $\lambda_n\to 0$, the cubic term becomes negligible, while if $|\tfrac{x_n}{\lambda_n}|\to\infty$, the potential term becomes negligible. In particular: 
\begin{itemize}
\item If $\lambda_n\to 0$ and $x_n\equiv 0$, we approximate using solutions to the quintic NLS with inverse-square potential, that is, \eqref{CriticalNLS}.  For this model, scattering holds for arbitrary $\dot H^1$ data (cf. Theorem~\ref{CriticalWP}).
\item If $\lambda_n\equiv 1$ and $|x_n|\to\infty$, we approximate using solutions to the cubic-quintic NLS without potential, that is, \eqref{NLSfree}. For this model, scattering holds for data with mass-energy in the region $\mathcal{K}_0$ (see \cite{KillipOhPoVi2017}), which contains our desired scattering region $\mathcal{K}_a$ for all $a$ (cf. Corollary~\ref{Compa22}).
\item If $\lambda_n\to 0$ and $|\tfrac{x_n}{\lambda_n}|\to 0$, we approximate using solutions to the quintic NLS without potential, that is, \eqref{3dquintic}.  For this model, scattering holds for arbitrary $\dot H^1$ data \cite{KiiVisan2008, Bourgain1999, TaoKell2008}.
\end{itemize}
The technique of proof blends ideas from the works \cite{KillipOhPoVi2017, KillipMurphyVisanZheng2017, KillipMiaVisanZhangZheng}.

\begin{proposition}[Embedding nonlinear profiles]\label{P:embedding} Fix $a>-\tfrac14+\tfrac1{25}$.

Suppose $\lambda_n\equiv 1$ or $\lambda_n\to 0$, and that $\{x_n\}$ is such that either
\[
|\tfrac{x_n}{\lambda_n}|\to \infty, \qtq{or} \lambda_n\to 0 \qtq{and} x_n\equiv 0.
\]

Let $\L_a^n$  be as in \eqref{DefOperator} corresponding to sequence 
$\{\tfrac{x_{n}}{\lambda_{n}}\}$, and let $\{t_n\}$ satisfy $t_n\equiv 0$ or $t_n\to\pm\infty$.

\begin{itemize} 
\item If $\lambda_n\equiv 1$, then let $\phi\in H^1$ satisfy  {$(M(\phi),E_{0}(\phi))\in\mathcal{K}_0$} and define
\[
\phi_{n}(x):=[e^{-it_{n}\L^{n}_{a}}\phi](x-x_{n}).
\]
\item If $\lambda_n\to 0$, then let $\phi \in \dot H^1$, $\theta\in(0,1)$, and  
\[
\phi_{n}(x):=\lambda_{n}^{-\frac{1}{2}}[e^{-it_{n}\L^{n}_{a}} P^{a}_{\geq \lambda_{n}^{\theta}}\phi]( \tfrac{x-x_{n}}{\lambda_{n}}).
\]
\end{itemize}

Then for $n$ sufficiently large, there exists a global solution $v_n$ to \eqref{NLS} with
\[
v_n(0)=\phi_n \qtq{and} \|v_n\|_{L_{t,x}^{10}(\R\times\R^3)}\lesssim 1,
\]
with the implicit constant depending on $\|\phi\|_{H^1}$ if $\lambda_n\equiv 1$ or $\|\phi\|_{\dot H^1}$ if $\lambda_n\to 0$. 

Moreover, for any $\epsilon>0$ there exist $N=N(\epsilon)\in \N$ and a smooth compactly supported function
$\chi_{\epsilon}\in C^{\infty}_{c}(\R\times \R^{3})$  such that for  $n\geq N$,
\begin{align}\label{BewAprox11}
\Big\| v_{n}(t,x)-\lambda^{-1/2}_{n}\chi_{\epsilon}(\tfrac{t}{\lambda^{2}_{n}}+t_{n}, \tfrac{x-x_{n}}{\lambda_{n}})  \Big\|_{X(\R\times \R^{3})}
&<\epsilon,
\end{align}
 where 
\[
X\in\{L_{t,x}^{10}, L^{10}_{t}\dot{H}_{x}^{1,\frac{30}{13}}, L_t^{\frac{5}{2}}\dot H_x^{1,\frac{30}{13}} \}.
\]
\end{proposition}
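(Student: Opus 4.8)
The plan is, in each of the three regimes, to build a global approximate solution $\tilde v_n$ of \eqref{NLS} out of a solution of the appropriate model equation, to check that $\tilde v_n$ has a uniformly bounded $L^{10}_{t,x}$ norm and a negligible error, and then to apply the stability result Lemma~\ref{StabilityNLS} to upgrade $\tilde v_n$ to a genuine solution $v_n$ with $v_n(0)=\phi_n$ and $\|v_n\|_{L^{10}_{t,x}}\lesssim1$. Concretely: (a) if $\lambda_n\equiv1$ and $|x_n|\to\infty$ we use the cubic--quintic NLS without potential \eqref{NLSfree} and take $\tilde v_n(t,x):=w(t+t_n,x-x_n)$, where $w$ solves \eqref{NLSfree} with $w(0)=\phi$ (if $t_n\equiv0$) or, if $t_n\to\pm\infty$, is the solution produced by the wave operators; since $(M(\phi),E_0(\phi))\in\mathcal K_0$ (resp. the asymptotic mass--energy lies in $\mathcal K_0$, as holds in the application), $w$ is global with $\|w\|_{L^{10}_{t,x}}\lesssim1$ by \cite{KillipOhPoVi2017}. (b) if $\lambda_n\to0$ and $x_n\equiv0$ we use the quintic NLS with inverse-square potential \eqref{CriticalNLS} and take $\tilde v_n(t,x):=\lambda_n^{-1/2}u_n(\lambda_n^{-2}t+t_n,\lambda_n^{-1}x)$, where $u_n$ solves \eqref{CriticalNLS} with the (truncated, hence $H^1$) data $P^a_{\geq\lambda_n^\theta}\phi$ or its wave-operator analogue; Theorem~\ref{CriticalWP} gives $\|u_n\|_{L^{10}_{t,x}}\lesssim_{\|\phi\|_{\dot H^1}}1$ uniformly. (c) if $\lambda_n\to0$ and $|x_n/\lambda_n|\to\infty$ we use the quintic NLS without potential \eqref{3dquintic} and take $\tilde v_n(t,x):=\lambda_n^{-1/2}w(\lambda_n^{-2}t+t_n,\lambda_n^{-1}(x-x_n))$ with $w$ as in (a) but now furnished by \cite{TaoKell2008}. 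In every case conservation of mass and energy for the model, together with the scaling identities, gives $\|\tilde v_n\|_{L^\infty_tH^1_a}\lesssim1$, and $\|\phi_n\|_{L^2}$ is bounded --- indeed $\lesssim\lambda_n^{1-\theta}\|\phi\|_{\dot H^1}\to0$ when $\lambda_n\to0$, by Bernstein --- so the hypotheses $E$, $M$ of Lemma~\ref{StabilityNLS} hold uniformly.

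The core step is the error estimate. Writing $e_n:=(i\partial_t-\L_a)\tilde v_n+|\tilde v_n|^2\tilde v_n-|\tilde v_n|^4\tilde v_n$ and using the equation for the model together with the scale/translation covariance of $\L_a$ (whereby $\L_a$ applied to a translated function becomes the operator $\L_a^n$ of \eqref{DefOperator} applied to the original), one sees that $e_n$ is the sum of exactly two terms: the cubic term $|\tilde v_n|^2\tilde v_n$ --- present only when $\lambda_n\to0$, i.e. in (b) and (c) --- and the inverse-square term $\tfrac{a}{|x|^2}\tilde v_n$ after subtracting its rescaled/translated copy --- present only when the potential is not matched, i.e. in (a) and (c). For the cubic term, rescaling produces a positive power of $\lambda_n$ (the cubic nonlinearity is energy-subcritical, hence negligible at small scales), so that $\|\sqrt{\L_a}(|\tilde v_n|^2\tilde v_n)\|_{N(\R)}\lesssim\lambda_n^{\gamma}$ for some $\gamma>0$, the implicit constant finite by the persistence-of-regularity statement in Remark~\ref{persistenceCritical}; the same works for the $|\nabla|^{3/5}$-variant. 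For the inverse-square term, one splits the model solution into a piece supported in a fixed compact subset of $\R\times\R^3$ plus a small Strichartz remainder: on the compact piece the weight $\tfrac{a}{|x|^2}$ is $O(|x_n|^{-2})$ in (a) and $O((\lambda_n/|x_n|)^2)$ in (c), hence negligible, while the remainder is estimated by Hardy-type inequalities together with the local smoothing bound Lemma~\ref{LocalSmoo}, which is exactly what accommodates a derivative falling on the singular weight at only $\dot H^1$-level regularity. Matching of initial data, $\|\phi_n-\tilde v_n(0)\|_{\dot H^1_a}\to0$ (and the analogous $\dot H^{3/5}_x$ bound), follows from $P^a_{\geq\lambda_n^\theta}\to\mathrm{Id}$ on $\dot H^1_a$, from the scattering convergence $\|w(t_n)-e^{-it_n\L_a^\infty}\phi\|_{\dot H^1}\to0$ when $t_n\to\pm\infty$, and from the operator-convergence and decay statements \eqref{Conver33}, \eqref{Conver44}, \eqref{DecayingES1}--\eqref{DecayingES2} of Lemma~\ref{ConverOpera} used to exchange $\L_a^\infty$ for $\L_a^n$. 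Lemma~\ref{StabilityNLS} then produces $v_n$ with $\|v_n-\tilde v_n\|_{\dot S^1_a(\R)}=o(1)$ and $\|v_n\|_{\dot S^1_a(\R)}\lesssim1$, and since $\dot H^{1,30/13}(\R^3)\hookrightarrow L^{10}(\R^3)$ this gives $\|v_n\|_{L^{10}_{t,x}}\lesssim1$.

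For \eqref{BewAprox11}, fix $\epsilon>0$ and choose $\chi_\epsilon\in C^\infty_c(\R\times\R^3)$ approximating the (untruncated) model solution to within $\epsilon'$ in both $L^{10}_{t,x}$ and $L^{10}_t\dot H^{1,30/13}_x$ --- possible because $C^\infty_c$ is dense in these spaces, the model solution lies in both by Remark~\ref{persistenceCritical}, and the truncated model solutions converge to the untruncated one in these norms by stability for the model equation. Both of these norms are invariant under the $\dot H^1$-critical rescaling, so the rescaled/translated $\chi_\epsilon$ is $O(\epsilon')$-close to $\tilde v_n$ in them; combined with $\|v_n-\tilde v_n\|_{\dot S^1_a}=o(1)$ this yields \eqref{BewAprox11} for $X\in\{L^{10}_{t,x},\,L^{10}_t\dot H^{1,30/13}_x\}$ once $n$ is large. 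The remaining norm $X=L^{5/2}_t\dot H^{1,30/13}_x$ is \emph{not} scaling-invariant, but in the regime $\lambda_n\to0$ the rescaling generates a favorable factor $\lambda_n^{3/5}$, so it suffices to bound the model solution uniformly in $L^{5/2}_t\dot H^{1,30/13}_x$ and let $n\to\infty$; when $\lambda_n\equiv1$ one uses density of $C^\infty_c$ in $L^{5/2}_t\dot H^{1,30/13}_x$ directly. I expect the main obstacle to be the inverse-square error term in cases (a) and (c): reconciling the single derivative $\sqrt{\L_a}$ (or $|\nabla|^{3/5}$) with the $|x|^{-2}$ weight when only $\dot H^1$-regularity of the model solution is available --- this is precisely why the local smoothing estimate Lemma~\ref{LocalSmoo} is needed --- and, in (c), ensuring that the two smallness mechanisms ($\lambda_n\to0$ killing the cubic term and $|x_n/\lambda_n|\to\infty$ killing the potential) operate simultaneously and uniformly.
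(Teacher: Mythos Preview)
Your overall strategy is right, but the handling of the inverse-square error in scenarios (a) and (c) has a genuine gap. You propose to split the model solution as $w=w_{\mathrm{cpt}}+r$ with $r$ small in Strichartz norms, and then estimate $\|\sqrt{\L_a}\bigl(\tfrac{a}{|x|^2}\,r(\cdot-x_n)\bigr)\|_{N(\R)}$ by ``Hardy-type inequalities together with the local smoothing bound Lemma~\ref{LocalSmoo}.'' This does not work. Smallness of $r$ in $L^{10}_{t,x}$ or $L^{10}_t\dot H^{1,30/13}_x$ gives no control whatsoever on $|x|^{-2}r$ near $x=0$: for instance, placing the estimate in $L^2_tL^{6/5}_x$ and applying the product rule leaves a term like $\||x|^{-2}\nabla r\|_{L^{6/5}_x}$, and $|x|^{-2}\notin L^{3/2}_{\mathrm{loc}}(\R^3)$, while iterating Hardy would require $r\in\dot H^2$, which you do not have. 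Lemma~\ref{LocalSmoo} is not relevant here; it bounds $\nabla e^{-it\L_a}\phi$ on a \emph{compact} space-time set in terms of a small power of $\|e^{-it\L_a}\phi\|_{L^{10}_{t,x}}$, and is used in the paper exactly once, in the Palais--Smale step \eqref{Newfinallimit}, not in this proposition.

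The paper resolves this by a different construction of $\tilde v_{n,T}$: it multiplies the model solution by a \emph{spatial} cutoff $\chi_n$ that vanishes on $\{|x|<\tfrac14|x_n|\}$, so that on $\operatorname{supp}\chi_n(\cdot-x_n)$ one has $|x|^{-2}\lesssim|x_n|^{-2}$ pointwise and the potential error is genuinely small. This cutoff introduces new error terms $\nabla\chi_n\cdot\nabla w_n$ and $\Delta\chi_n\,w_n$; to control $\nabla(\nabla\chi_n\cdot\nabla w_n)$ one needs a bound on $\|\Delta w_n\|_{L^\infty_tL^2_x}$, which is why the paper additionally imposes a \emph{high}-frequency cutoff $P^a_{\le|x_n/\lambda_n|^\mu}$ on the data (yielding $\|\Delta w_n\|\lesssim|x_n/\lambda_n|^{\mu}$ via persistence of regularity). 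Finally, these $L^1_tL^2_x$ estimates pick up a factor of $T$, so the paper also uses a \emph{time} cutoff: for $|t|>\lambda_n^2T$ one replaces the nonlinear model by the linear $\L_a$-flow and sends $T\to\infty$ after $n\to\infty$. Your proposal omits all three of these devices (spatial cutoff, high-frequency truncation, time cutoff), and without them the potential error is not under control.
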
 

\begin{remark} In the scenario in which $\lambda_n\equiv 1$, the approximation in \eqref{BewAprox11} may also be taken to hold in Strichartz spaces of $L^2$ regularity. 
\end{remark}

\begin{proof} We distinguish three scenarios throughout the proof:
\begin{itemize}
\item Scenario Q$a$: $\lambda_n\to 0$ and $x_n\equiv 0$. (Here $\phi\in \dot H^1$.)
\item Scenario CQ0: $\lambda_n\equiv 1$ and $|x_n|\to\infty$. (Here $\phi\in H^1$.)
\item Scenario Q0: $\lambda_n\to 0$ and $|\tfrac{x_n}{\lambda_n}|\to\infty$ (Here $\phi\in \dot H^1$.) 
\end{itemize}
Fixing $\mu,\theta\in(0,1)$, we firstly define
\[
\psi_n = \begin{cases} P_{>\lambda_n^\theta}^a \phi &\text{in Scenario Q$a$,} \\ P_{\leq |x_n|^\mu}^a \phi & \text{in Scenario CQ0,} \\ P_{\lambda_n^\theta\leq \cdot<|\frac{x_n}{\lambda_n}|^\mu}^a \phi & \text{in Scenario Q0.}\end{cases}
\]
We also set
\[
H= \begin{cases} -\Delta & \text{in Scenarios CQ0 and Q0}, \\ \L_a & \text{in Scenario Q$a$}.
\end{cases} 
\]

\textbf{Construction of approximate solutions, part 1.} We first construct functions $w_n$ and $w$ as follows:

If $t_n\equiv 0$, then we define $w_n$ and $w$ as the global solutions to an appropriate NLS model with initial data $\psi_n$ and $\phi$, respectively.  In particular, in Scenario Q$a$, we use the model \eqref{CriticalNLS} (quintic NLS with inverse-square potential), appealing to Theorem~\ref{CriticalWP}.  In Scenario CQ0, we use the model \eqref{NLSfree} (cubic-quintic NLS without potential), appealing to the main result in \cite{KillipOhPoVi2017}.  Notice that $(M(\psi_n),E_{0}(\psi_n))\in\mathcal{K}_0$ for $n$ 
sufficiently large (recall that $|x_{n}|\to \infty$). Finally, in Scenario Q0, we use the model \eqref{3dquintic} (quintic NLS without potential), appealing to the main result in \cite{TaoKell2008}. 

If instead $t_n\to\pm\infty$, then we define $w_n$ and $w$ to be the solutions to the appropriate model (determined according to the three scenarios as above) satisfying
\begin{equation}\label{wnscat}
\|w_n-e^{-itH}\psi_n\|_{\dot H^1} \to 0 \qtq{and} \|w-e^{-itH}\phi\|_{\dot H^1} \to 0
\end{equation}
as $t\to\pm\infty$.  Note that in either case (i.e. $t_n\equiv 0$ or $t_n\to\pm\infty$), $w$ has scattering states $w_\pm$ as $t\to\pm\infty$ in $\dot H^1$. 

The solutions just constructed obey 
\begin{equation}\label{wnscat-bds}
\|\sqrt{H} w_n\|_{S^0(\R)}+\|\sqrt{H} w\|_{S^0(\R)} \leq C(\|\phi\|_{\dot H^1})
\end{equation}
uniformly in $n$.  At the level of $L^2$ regularity, by the Bernstein inequality and equivalence of Sobolev spaces (in Scenario Q$a$),  we may derive the following bounds:
\begin{equation}\label{JPRlo}
\begin{cases}
\|w_n\|_{S_a^0(\R)}\lesssim C(\|\phi\|_{\dot H^1})\lambda_n^{-\theta} & \text{in Scenario Q$a$}, \\
\|w_n\|_{S^0(\R)} \lesssim C(\|\phi\|_{H^1}) & \text{in Scenario CQ$0$}, \\
\|w_n\|_{S^0(\R)}\lesssim C(\|\phi\|_{\dot H^1}) \lambda_n^{-\theta} & \text{in Scenario Q$0$},
\end{cases}
\end{equation}
uniformly in $n$.  In Scenarios CQ$0$ and Q$0$, we may also use persistence of regularity to obtain the bounds
\begin{equation}\label{JPRhi}
\| |\nabla|^s w_n\|_{\dot S^1(\R)} \lesssim \bigl|\tfrac{x_n}{\lambda_n}\bigr|^{s\mu} 
\end{equation}
for higher $s$. 

By stability theory, we may also derive that in each case
\begin{equation}\label{wn-vs-w-stable}
\lim_{n\to\infty} \|\sqrt{H}[w_n-w]\|_{L_t^q L_x^r(\R\times\R^3)}=0\qtq{for all admissible}(q,r). 
\end{equation}

\textbf{Construction of approximate solutions, part 2.} We now define approximate solutions to \eqref{NLS} on $\R\times\R^3$:

For each $n$, let $\chi_n$ be a smooth function obeying
\[
\chi_n(x)=\begin{cases} 0 & |x_n+\lambda_n x|<\tfrac14|x_n| \\ 1 & |x_n + \lambda_n x|>\tfrac12 |x_n|,\end{cases}\qtq{with} |\partial^k \chi_n(x)|\lesssim \bigl(\tfrac{\lambda_n}{|x_n|}\bigr)^{|k|}
\]
uniformly in $x$.  In particular, $\chi_n(x)\to 1$ as $n\to\infty$ for each $x\in\R^3$.  In fact, in Scenario~Q$a$, we have $x_n\equiv 0$, so that $\chi_n(x)\equiv 1$ and the derivatives of $\chi_n$ vanish identically. 

Now, for $T\geq 1$, we define
\[
\tilde v_{n,T}(t,x)=\begin{cases} 
\lambda^{-\frac{1}{2}}_{n}[\chi_{n}w_{n}](\lambda^{-2}_{n}t, \lambda^{-1}_{n}(x-x_{n})), & |t|\leq \lambda^{2}_{n}T,\\
e^{-i(t-\lambda^{2}_{n}T)\L_{a}}\tilde{v}_{n,T}(\lambda^{2}_{n}T,x),& t> \lambda^{2}_{n}T,\\
e^{-i(t+\lambda^{2}_{n}T)\L_{a}}\tilde{v}_{n,T}(-\lambda^{2}_{n}T,x),& t<- \lambda^{2}_{n}T.
\end{cases} 
\]
In Scenario Q$a$, we alter the definition by using the first approximation for all $t\in\R$; in particular, the additional parameter $T$ plays no role in this scenario.  

Keeping in mind that $\tilde v_{n,T}$ are meant to be approximate solutions to \eqref{NLS}, we define the `errors'
\[
e_{n,T}:=(i\partial_{t}-\L_{a})\tilde{v}_{n,T}-|\tilde{v}_{n,T}|^{4}\tilde{v}_{n,T}+|\tilde{v}_{n,T}|^{2}\tilde{v}_{n,T}.
\]

\textbf{Conditions for stability.} Our goal is to establish the following: for $s\in \left\{1,\frac{3}{5}\right\}$,
\begin{align}\label{estimate11}
&\limsup_{T\to\infty}\limsup_{n\to\infty}\bigl\{\|\tilde{v}_{n,T}\|_{L^{\infty}_{t}H^{1}_{x}(\R\times\R^3)}
+\|\tilde{v}_{n,T}\|_{L^{10}_{t,x}(\R\times\R^3)}\bigr\}\lesssim 1,\\	\label{estimate22}
&	\limsup_{T\to\infty}\limsup_{n\to\infty}\|\tilde{v}_{n,T}(\lambda^{2}_{n}t_{n})-\phi_{n}\|_{\dot H^s}=0,
\\\label{estimate33}
&\limsup_{T\to\infty}\limsup_{n\to\infty}\||\nabla|^{s}
e_{n,T}\|_{N(\R)}=0,
\end{align}
where space-time norms are over $\R\times\R^{3}$.

\textbf{Proof of \eqref{estimate11} (space-time bounds).}  First, by definition of $\tilde v_{n,T}$, Strichartz, \eqref{JPRlo}
\begin{align*}
\|\tilde v_{n,T}\|_{L_t^\infty L_x^2} \lesssim \lambda_n \|\chi_n\|_{L_x^\infty} \|w_n\|_{L_t^\infty L_x^2} \lesssim \lambda_n^{1-\theta}.
\end{align*}
Similarly, noting that $\chi_n\equiv 1$ in Scenario Q$a$ and
\[
\|\nabla \chi_n\|_{L^\infty} \lesssim \tfrac{\lambda_n}{|x_n|}\to 0\qtq{as}n\to\infty
\]
in the remaining scenarios and using equivalence of Sobolev spaces, we may estimate 
\begin{align*}
\|\nabla& \tilde v_{n,T}\|_{L_t^{10}L_x^{\frac{30}{13}}\cap L_t^\infty L_x^2} \\
& \lesssim \| \nabla[\chi_n w_n]\|_{L_t^{10} L_x^{\frac{30}{13}}\cap L_t^\infty L_x^2} + \|[\chi_n w_n](\pm \lambda_n^2 T)\|_{\dot H^1} \\
&\lesssim  \|w_n\|_{L_t^{10}\dot{H}^{1,\frac{30}{13}}\cap L_t^\infty \dot{H}^{1}} + \|\nabla \chi_n\|_{L^3}\|w_n\|_{L_t^\infty L_x^6}+\|\chi_n\|_{L^\infty}\|\nabla w_n\|_{L_t^\infty L_x^2} \lesssim 1. 
\end{align*}
Thus, using Sobolev embedding as well, we derive \eqref{estimate11}.

\textbf{Proof of \eqref{estimate22} (agreement of data).}  We first observe that in all scenarios, we have the estimates
\[
\|\phi_n\|_{L^2} \lesssim 1 \qtq{and} \|\tilde v_{n,T}\|_{L_t^\infty L_x^2} \lesssim 1,
\]
so that it suffices to prove the $s=1$ case of \eqref{estimate22}. 

First, if $t_n\equiv 0$, then we first change variables to obtain
\[
\|\nabla[\tilde v_{n,T}(0)-\phi_n]\|_{L^2} = 
\begin{cases}
0 & \text{in Scenario Q$a$} \\
\bigl\|\nabla[\chi_nP^a_{\leq |x_n|^\mu}\phi - \phi]\bigr\|_{L^2} & \text{in Scenario CQ$0$} \\
\bigl  \|\nabla[\chi_n P^a_{\lambda_n^\theta\leq\cdot<|\tfrac{x_n}{\lambda_n}|^\mu}\phi-P_{>\lambda_n^\theta}^a\phi]\bigr\|_{L^2} & \text{in Scenario Q$0$}.
 \end{cases}
\]
We treat Scenario Q$0$ in detail and omit details for the simpler Scenario CQ$0$.  In particular, in Scenario Q$0$ we  
rewrite
\begin{align}
\chi_n  P_{\lambda_n^\theta\leq\cdot<|\frac{x_n}{\lambda_n}|^\mu}^a \phi - P^{a}_{>\lambda_n^\theta}\phi 
& = (\chi_n-1)P_{>\lambda_n^\theta}^a \phi \label{JDA1}\\
& \quad - \chi_n P_{>|\frac{x_n}{\lambda_n}|^\mu}^a\phi \label{JDA2}
\end{align}

For \eqref{JDA1}, we apply the product rule and write
\[
\nabla\eqref{JDA1} = \nabla\chi_n\cdot\phi+ (1-\chi_n)\nabla \phi  - \nabla\chi_n\cdot P_{\leq \lambda_n^\theta}^a\phi  - (1-\chi_n)\nabla P_{\leq \lambda_n^\theta}^a\phi. 
\]
For the first two terms, we have
\begin{align*}
\|\nabla \chi_n \phi + (1-\chi_n)\nabla\phi\|_{L^2} & \lesssim \|\nabla\chi_n\|_{L^3}\|\phi\|_{L^6(\text{supp}(\nabla \chi_n))} + \|\nabla \phi\|_{L^2(\text{supp}(1-\chi_n))} \\
& \lesssim \|\phi\|_{L^6(\text{supp}(\nabla \chi_n))} + \|\nabla \phi\|_{L^2(\text{supp}(1-\chi_n))} = o(1)
\end{align*}
as $n\to\infty$ by the dominated convergence theorem. For the last two terms, we instead have
\begin{align*}
\|\nabla&\chi_n\cdot P_{\leq \lambda_n^\theta}^a\phi  + (1-\chi_n)\nabla P_{\leq \lambda_n^\theta}^a\phi\|_{L^2} \\
& \lesssim \|\nabla \chi_n\|_{L^3}\|P_{\leq\lambda_n^\theta}^a\phi\|_{L^6} + \|\nabla P_{\leq \lambda_n^\theta}^a\phi\|_{L^2} \\ 
& \lesssim \|P_{\leq\lambda_n^\theta}^a\phi\|_{L^6} + \|\sqrt{\L_a}P_{\leq\lambda_n^\theta}^a\phi\|_{L^2} = o(1)
\end{align*}
as $n\to\infty$ by a density argument, using the fact that $\lambda_n\to 0$.  Applying the product rule to \eqref{JDA2} and then estimating as we just did for the last two terms shows that
\[
\|\nabla[\chi_n P_{>|\frac{x_n}{\lambda_n}|^\mu}^a \phi]\|_{L^2} \to 0 \qtq{as}n\to\infty,
\]
as well.  Thus, in the case $t_n\equiv 0$, we have
\[
\lim_{n\to\infty} \|\nabla[\tilde v_{n,T}(0)-\phi_n]\|_{L^2} =0.
\]

We next establish $\dot H^1$ convergence in the case $t_n\to+\infty$ (the case $t_n\to-\infty$ is handled similarly). As before, we change variables to obtain
\begin{align*}
\|\tilde v_{n,T}&(\lambda_n^2 t_n)-\phi_n\|_{\dot H_a^1} \\
&=\begin{cases} \|\sqrt{\L_a}\bigl[w_n(t_n)-e^{-it_n\L_a}P^{a}_{>\lambda_n^\theta}\phi\bigr]\|_{L^2} & \text{in Scenario Q$a$,} \\
\|\sqrt{\L_a^n}\bigl[(\chi_n w_n)(T)-e^{-iT\L_a^n}P_{> \lambda_n^\theta}^a\phi\bigr]\|_{L^2} & \text{in Scenario Q$0$}, \\
\|\sqrt{\L_a^n}\bigl[(\chi_n w_n)(T)-e^{-iT\L_a^n}\phi\bigr]\|_{L^2} & \text{in Scenario CQ$0$}.
\end{cases}
\end{align*}

In Scenario Q$a$, we have $P_{>\lambda_n^\theta}^a\phi = \psi_n$, and hence we obtain 
\[
\lim_{n\to\infty} \|\tilde v_{n,T}(\lambda_n^2 t_n)-\phi_n\|_{\dot H_a^1} = 0
\]
directly from \eqref{wnscat}. 

Again, let us treat Scenario Q$0$ in detail and omit details for the simpler Scenario CQ$0$.  We begin by using the equivalence of Sobolev spaces to obtain
\begin{align}
\|\tilde  v_{n,T}(\lambda_n^2 t_n)-\phi_n\|_{\dot H_a^1} 
& \lesssim \|\nabla[\chi_n(w_n(T)-w(T))]\|_{L^2} \label{JDAA1} \\
& \quad + \|\nabla[w(T)(\chi_n-1)]\|_{L^2} \label{JDAA2} \\
& \quad + \|\sqrt{\L_a^n}[w(T)-e^{-iT\L_a^n}\phi]\|_{L^2}\label{JDAA3} \\
& \quad + \| P_{\leq \lambda_n^\theta}^a\phi\|_{\dot H_a^1} \label{JDAA4}.
\end{align}

For \eqref{JDAA1}, we use H\"older's inequality and \eqref{wn-vs-w-stable} to obtain
\begin{align*}
\eqref{JDAA1} & \lesssim \|\nabla \chi_n\|_{L^3}\|w_n(T)-w(T)\|_{L^6} + \|\chi_n\|_{L^\infty}\|\nabla[w_n(T)-w(T)]\|_{L^2} \\
& \to 0 \qtq{as}n\to\infty.
\end{align*}
For \eqref{JDAA2}, we argue as above to obtain
\begin{align*}
\|\nabla[w(T)(\chi_n-1)]\|_{L^2} & \lesssim \|\nabla w(T)\|_{L^2(\text{supp}(\chi_n-1))} + \|w(T)\|_{L^6(\text{supp}(\nabla \chi_n))} \\
& \to 0 \qtq{as}n\to\infty.
\end{align*}
To estimate \eqref{JDAA3}, we decompose further and first write
\begin{align}
\eqref{JDAA3} & \lesssim \|(\sqrt{\L_a^n}-\sqrt{H})w(T)\|_{L^2} + \|[\sqrt{\L_a^n}-\sqrt{H}]\phi\|_{L^2} \label{JDAA31} \\
& \quad + \|(e^{-iT\L_a^n}-e^{-iTH})\sqrt{H}\phi\|_{L^2} \label{JDAA32} \\
& \quad + \|\sqrt{H}(w(T)-e^{-iTH}\phi)\|_{L^2}\label{JDAA33}.
\end{align}
We now observe that the terms in \eqref{JDAA31} tend to zero as $n\to\infty$ as a consequence \eqref{Conver33}.  The term in \eqref{JDAA32} tends to zero as $n\to\infty$ due to \eqref{Conver44}, while the term in \eqref{JDAA33} tends to zero as $T\to\infty$ due to \eqref{wnscat}.  Finally, a density argument and the fact that $\lambda_n\to 0$ imply that the term in  \eqref{JDAA4} tends to zero as as $T\to\infty$. 

This completes the proof of \eqref{estimate22}.

\textbf{Proof of \eqref{estimate33} (control of errors).} We consider each scenario separately.

\underline{\emph{Proof of \eqref{estimate33}} in Scenario Q$a$}. In Scenario Q$a$, we have 
\[
e_n = \lambda_n^{-\frac32} (|w_n|^2 w_n)(\lambda_n^{-2}t,\lambda_n^{-1}x),
\]
where we have dropped the subscript $T$, as it is irrelevant in this scenario.  By a change of variables, \eqref{JPRlo}, and \eqref{wnscat-bds}, we may now estimate
\begin{align*}
\|\nabla e_n\|_{L_t^{\frac53}L_x^{\frac{30}{23}}} & \lesssim \lambda_n \|w_n\|_{L_{t,x}^{10}}\|w_n\|_{L_t^{\frac52}L_x^{\frac{30}{7}}}\|\nabla w_n\|_{L_t^{10}L_x^{\frac{30}{13}}} \\
& \lesssim \lambda_n^{1-\theta} \to 0 \qtq{as}n\to\infty.
\end{align*}
Similarly, we derive 
\[
\|e_n\|_{L_t^{\frac{5}{3}}L_x^{\frac{30}{23}}} \lesssim \lambda_n^{2-2\theta}\to 0 \qtq{as}n\to\infty.
\]
Thus we obtain \eqref{estimate33} in Scenario Q$a$.

\underline{\emph{Proof of \eqref{estimate33} in Scenario CQ$0$.}}  As $\tilde v_{n,T}$ is defined piecewise in time, we will treat the regions $|t|\leq T$ and $|t|>T$ separately (recall that in Scenario CQ$0$, we have $\lambda_n\equiv 1$). 

Recalling that $w_n$ is a solution to \eqref{NLSfree}, we find that on the region $|t|\leq T$, we have
\begin{align}
e_{n,T}(t,x)
&=-[(\chi_{n}-\chi^{3}_{n})|w_{n}|^{2}w_{n}](t, x-x_{n})\label{New11}\\
&\quad+[(\chi_{n}-\chi^{5}_{n})|w_{n}|^{4}w_{n}](t, x-x_{n})\label{New22}\\
&\quad+2[\nabla \chi_{n}\cdot \nabla w_{n}](t, x-x_{n})
+[\Delta \chi_{n} w_{n}](t, x-x_{n})\label{New33}\\
&\quad-\tfrac{a}{|x|^{2}}[\chi_{n}w_{n}](t, x-x_{n}).\label{New55}
\end{align}

In the region $t>T$, say, we instead have
\begin{align}\label{JLTe1}
e_{n,T} = -|\tilde v_{n,T}|^4 \tilde v_{n,T} + |\tilde v_{n,T}|^2 \tilde v_{n,T}.
\end{align}

We begin by estimating \eqref{New11}--\eqref{New55} on $[-T,T]\times\R^3$. 

For \eqref{New11}, we apply a change of variables and H\"older's inequality to estimate
\begin{align*}
\|\nabla& \eqref{New11}\|_{L_{t}^{\frac{5}{3}}L^{\frac{30}{23}}_{x}}
\\ &\lesssim \|(\chi_{n}-\chi^{3}_{n})|w_{n}|^{2}\nabla w_{n}\|_{L_{t}^{\frac{5}{3}}L^{\frac{30}{23}}_{x}}+
\|\nabla \chi_{n}(1-3\chi^{2}_{n})w_{n}^{3}\|_{L_{t}^{\frac{5}{3}}L^{\frac{30}{23}}_{x}}\\
&\lesssim \bigl[ \|  \nabla w_{n} \|_{L_{t}^{10}L^{\frac{30}{13}}_{x}} \| w_{n} \|_{L_{t}^{\frac{5}{2}}L^{\frac{30}{7}}_{x}} 
+\|\nabla\chi_{n}\|_{L^{3}_{x}} \| w_{n} \|_{L_{t}^{\frac{5}{2}}L^{\frac{30}{7}}_{x}} \| w_{n} \|_{L^{{10}}_{t, x}} \bigr]\\
&\quad \times\bigl[ \| w_{n}-w \|_{L^{{10}}_{t, x}}+\|w \|_{L^{{10}}_{t, x}([-T,T]\times\{|x+x_{n}|\leq \frac{|x_{n}|}{4}\} )}\bigr]
\\ & \quad \to 0 \qtq{as}n\to\infty,
\end{align*}
where we have applied the dominated convergence theorem and \eqref{wn-vs-w-stable}.  On the other hand,
\[
\|\eqref{New11}\|_{L_{t}^{\frac{5}{3}}L^{\frac{30}{23}}_{x}}\lesssim
\|  w_{n} \|_{L_{t}^{10}L^{\frac{30}{13}}_{x}} 
\| w_{n} \|_{L_{t}^{\frac{5}{2}}L^{\frac{30}{7}}_{x}} 
\| w_{n} \|_{L^{{10}}_{t, x}}\lesssim 1,
\]
and thus we obtain the desired estimates on \eqref{New11} by interpolation.

The remaining terms, namely \eqref{New22}, \eqref{New33}, and \eqref{New55} may be handled exactly as in Step 4 in \cite[Theorem 4.1]{KillipMiaVisanZhangZheng} (setting $\lambda_n\equiv 1$; cf. the estimates of (4.9)--(4.12) therein). Thus, we will only mention the main ideas here.  For \eqref{New22}, the argument is similar to the one used to estimate \eqref{New11}.  For \eqref{New33}--\eqref{New55}, we estimate in $L_t^1 L_x^2$, obtaining the crude bound $T$ from the integral in time.  One relies on the decay of derivatives of $\chi_n$ and of the potential on the support of $\chi_n(\cdot-x_n)$ as $n\to\infty$; in particular, these terms are ultimately negligible due to the fact that $|x_n|\to\infty$.  The first term in \eqref{New33} also explains the need for the high-frequency cutoff in the definition of $w_n$, as additional derivatives may land on the term $\nabla w_n$. Thus, for example, uising \eqref{JPRhi}, we end up with the term
\[
T\|\nabla \chi_n\|_{L^\infty}\|\Delta w_n\|_{L_t^\infty L_x^2} \lesssim T|x_n|^{\mu-1}\to 0 \qtq{as}n\to\infty. 
\]

For the term \eqref{JLTe1}, the essential fact that we need is
\begin{equation}\label{LimitAx}
\limsup_{T\to\infty}\limsup_{n\to\infty} \|e^{-it\L_a^n}[\chi_n w_n(T)]\|_{L_{t,x}^{10}((0,\infty)\times\R^3)}\to 0.
\end{equation}
To see this, we may again argue as in \cite[Theorem~4.1, Step 4]{KillipMiaVisanZhangZheng}.  The idea is that (by estimating much as we did for \eqref{JDAA1}--\eqref{JDAA2} above), we may obtain
\[
\|e^{-it\L_a^n}[\chi_n w_n(T)]\|_{L_{t,x}^{10}((0,\infty)\times\R^3)} = \|e^{-it\L_a^n}w(T)\|_{L_{t,x}^{10}((0,\infty)\times\R^3)} + o(1)
\]
as $n\to\infty$.  But now, using the facts that $w(T)$ has a scattering state $w_+$ and $\L_a^n$ converges to $-\Delta$ (in the sense made precise below), the desired convergence can be derived from Strichartz estimates (for $e^{it\Delta}w_+$ on $(T,\infty)$) and the monotone convergence theorem.

With \eqref{LimitAx} in place, we can then estimate the terms in \eqref{JLTe1} as follows.

First (choosing $s\in\{1,\tfrac35\}$), 
\begin{align*}
\||\nabla|^{s}&  | \tilde{v}_{n,T}|^{2} \tilde{v}_{n,T}\|_{L_{t}^{\frac{5}{3}}L^{\frac{30}{23}}_{x}
(\left\{t>\lambda^{2}_{n}T\right\}\times \R^{3})}\\
&\lesssim 
\||\nabla|^{s}  \tilde{v}_{n,T}\|_{L_{t}^{10}L^{\frac{30}{13}}_{x}(\left\{t>\lambda^{2}_{n}T\right\}\times \R^{3})}
\|  \tilde{v}_{n,T} \|_{L^{{10}}_{t, x}(\left\{t>\lambda^{2}_{n}T\right\}\times \R^{3})}\\
&\quad \times \|  \tilde{v}_{n,T} \|_{L^{\frac{5}{2}}_{t}L^{\frac{30}{7}}_{x}(\left\{t>\lambda^{2}_{n}T\right\}\times \R^{3})}\\
&\lesssim
\|e^{it \L^{n}_{a}}[\chi_{n}w_{n}(T)]\|_{L^{10}_{t,x}(0, \infty)\times \R^{3}}\to 0
\end{align*}
as $n\to\infty$ and $T\to \infty$. Similarly, 
\[
\begin{split}
\||\nabla|^{s} | \tilde{v}_{n,T}|^{4} \tilde{v}_{n,T}\|_{L_{t}^{2}L^{\frac{6}{5}}_{x}
(\left\{t>\lambda^{2}_{n}T\right\}\times \R^{3})}
&\lesssim \|  \tilde{v}_{n,T} \|^{4}_{L^{{10}}_{t, x}(\left\{t>\lambda^{2}_{n}T\right\}\times \R^{3})}\\
&\lesssim \| e^{-it\L^{n}_{a}}(\chi_{n}\omega_{n}(T))  \|^{4}_{L^{{10}}_{t, x}((0, \infty)\times \R^{3})}\to 0,
\end{split}
\]
as $n\to\infty$ and $T\to \infty$.  

This completes the proof of \eqref{estimate33} in Scenario CQ$0$. 

\underline{\emph{Proof of \eqref{estimate33} in Scenario Q$0$.}} Again, we treat the regions $|t|\leq\lambda_n^2$ and $|t|>\lambda_n^2 T$ separately. 

Recalling that $w_n$ is a solution to \eqref{3dquintic}, we find that on the region $|t|\leq\lambda_n^2 T$, we have
\begin{align}
e_{n,T}(t,x)&=\lambda^{-\frac{3}{2}}_{n}[\chi^{3}_{n}|w_{n}|^{2}w_{n}](\lambda^{-2}_{n}t, \lambda^{-1}_{n}(x-x_{n})) 
\label{e11}
\\
&\quad+\lambda^{-\frac{5}{2}}_{n}[(\chi_{n}-\chi^{5}_{n})|w_{n}|^{4}w_{n}](\lambda^{-2}_{n}t, \lambda^{-1}_{n}(x-x_{n}))\label{e22}\\
&\quad+2\lambda^{-\frac{5}{2}}_{n}[\nabla \chi_{n}\cdot \nabla w_{n}](\lambda^{-2}_{n}t, \lambda^{-1}_{n}(x-x_{n}))\label{e33}
\\
&\quad+\lambda^{-\frac{5}{2}}_{n}[\Delta \chi_{n} w_{n}](\lambda^{-2}_{n}t, \lambda^{-1}_{n}(x-x_{n}))\label{e44}
\\
&\quad-\lambda^{-\frac{1}{2}}_{n}\tfrac{a}{|x|^{2}}[\chi_{n}w_{n}](\lambda^{-2}_{n}t, \lambda^{-1}_{n}(x-x_{n})).\label{e55}
\end{align}

In the region $t>\lambda_n^2 T$, we again have
\begin{equation}\label{e66}
e_{n,T} = -|\tilde v_{n,T}|^4 \tilde v_{n,T} + |\tilde v_{n,T}|^2 \tilde v_{n,T}.
\end{equation}

The estimates for \eqref{e22}--\eqref{e55} once again follow as in Step 4 in \cite[Theorem 4.1]{KillipMiaVisanZhangZheng}; in particular, they exploit essentially the fact that $\tfrac{|x_n|}{\lambda_n}\to\infty$ (and the fact that we work on a finite time interval for terms \eqref{e33}--\eqref{e55}).  Once again, the high-frequency cutoff in $w_n$ is used to handle the situation when additional derivatives land on $\nabla w_n$ in \eqref{e33}.  On the other hand, the low frequency cutoff in $w_n$ is needed to handle the remaining term \eqref{e11}, which we turn to now. 

Changing variables and applying H\"older's inequality, \eqref{JPRhi}, and \eqref{JPRlo}, we obtain
\begin{align*}
\|\nabla\eqref{e11}\|_{L_{t}^{\frac{5}{3}}L^{\frac{30}{23}}_{x}}
&\lesssim \lambda_{n}\|\chi_{n}\|_{L^{\infty}_{x}}
\|\nabla w_{n}\|_{L_{t}^{10}L^{\frac{30}{13}}_{x}}
\|  w_{n} \|_{L^{{10}}_{t, x}}\| w_{n}\|_{L^{\frac{5}{2}}_{t}L^{\frac{30}{7}}_{x}}\\
&\quad+\lambda_{n}\|\chi_{n}\|^{2}_{L^{\infty}_{x}}
\|\nabla \chi_{n}\|_{L^{3}_{x}}
\| w_{n}\|^{2}_{L_{t,x}^{10}}\| w_{n}\|_{L_{t}^{\frac{5}{2}}L^{\frac{30}{7}}_{x}}\\
& \lesssim\lambda^{1-\theta}_{n}\to 0 \quad \text{as $n\to \infty$.}
\end{align*}
Similarly, 
\[
\|\eqref{e11}\|_{L_{t}^{\frac{5}{3}}L^{\frac{30}{23}}_{x}}
\lesssim \lambda^{2-2\theta}_{n}\to 0 \quad \text{as $n\to \infty$,}
\]
and thus we obtain the desired estimates by interpolation.

For \eqref{e66}, we once again begin by observing \eqref{LimitAx}.  Then the estimate of the quintic term follows essentially as in Scenario~CQ$0$, while for the cubic term we use Strichartz, \eqref{JPRlo}, and estimate as follows:
\begin{align*}
&\|\nabla | \tilde{v}_{n,T}|^{2} \tilde{v}_{n,T}\|_{L_{t}^{\frac{5}{3}}L^{\frac{30}{23}}_{x}
(\left\{t>\lambda^{2}_{n}T\right\}\times \R^{3})}\\
&\lesssim 
\|\nabla \tilde{v}_{n,T}\|_{L_{t}^{10}L^{\frac{30}{13}}_{x}(\left\{t>\lambda^{2}_{n}T\right\}\times \R^{3})}
\|  \tilde{v}_{n,T} \|_{L^{{10}}_{t, x}(\left\{t>\lambda^{2}_{n}T\right\}\times \R^{3})}\|  \tilde{v}_{n,T} \|_{L^{\frac{5}{2}}_{t}L^{\frac{30}{7}}_{x}(\left\{t>\lambda^{2}_{n}T\right\}\times \R^{3})}\\
&\lesssim
\|  \tilde{v}_{n,T} \|_{L^{\frac{5}{2}}_{t}L^{\frac{30}{7}}_{x}(\left\{t>\lambda^{2}_{n}T\right\}\times \R^{3})}
\lesssim \lambda_{n}\|w_{n}\|_{{L}^{\infty}_{t}{L}^{2}_{x}}\lesssim \lambda^{1-\theta}_{n}\to 0 \quad \text{as $n\to\infty$.}
\end{align*}
Similarly,
\[
\|| \tilde{v}_{n,T}|^{2} \tilde{v}_{n,T}\|_{L_{t}^{\frac{5}{3}}L^{\frac{30}{23}}_{x}
(\left\{t>\lambda^{2}_{n}T\right\}\times \R^{3})} \lesssim \lambda_n^{2-2\theta}\to 0\quad \text{as $n\to\infty$,} 
\]
and hence by interpolation we obtain the desired bounds.

This completes the proof of \eqref{estimate33} in Scenario~Q$0$. 

\textbf{Construction of true solutions.} With \eqref{estimate11}--\eqref{estimate33} in place, we may apply the stability result (Lemma~\ref{StabilityNLS}) to deduce the existence of a global solution $v_n$ to \eqref{NLS} with $v_n(0)=\phi_n$,
\[
\|v_n\|_{L_{t,x}^{10}(\R\times\R^3)}\lesssim 1 \qtq{uniformly in}n,
\]
and
\[
\limsup_{T\to\infty}\limsup_{n\to\infty}\|v_n(t-\lambda_n^2 t_n)-\tilde v_{n,T}(t)\|_{\dot S_a^s(\R)} =0 \qtq{for}s\in\{1,\tfrac35\}.
\]

\textbf{Approximation by compactly supported functions.}  The final statement of the proposition, namely, the approximation in various energy-critical spaces by compactly supported functions of space-time, follows from a density argument as in \cite[Proposition~8.3]{KillipOhPoVi2017} and  \cite[Theorem~4.1]{KillipMiaVisanZhangZheng}, and relies primarily on the fact that we have obtained uniform space-time bounds in such spaces.  Thus, we omit the details and conclude the proof here. \end{proof}


\subsection{Existence of minimal blow-up solutions}\label{MinimalSolutions}
For each $\tau>0$,  we define
\[
B(\tau):=\sup\left\{\|  u \|_{L^{10}_{t,x}(\R\times\R^{3})}:\mbox{$u$ solves \eqref{NLS} and $\F(u)\leq \tau$}\right\},
\]
where $\F$ is as in \eqref{F-functional}.  By Lemma~\ref{FunctionF}(i), Theorem~\ref{MainTheorem} is equivalent to $B(\tau)<\infty$ for all $0<\tau<\infty$.

By Proposition~\ref{SDC} and \eqref{Enl}, we have $B(\tau)<\infty$ for $\tau$ sufficiently small.  Thus, by monotonicity of $B$, there exists $0<\tau_c\leq\infty$ so that
\begin{equation}\label{CriticalLevel}
\tau_{c}=\sup\left\{\tau: B(\tau)<\infty\right\}=\inf\left\{\tau: B(\tau)=\infty\right\}.
\end{equation}
We assume towards a contradiction that $\tau_{c}< \infty$. Using Lemma~\ref{StabilityNLS}, this implies that $B(\tau_c)=\infty$. Thus there exists a sequence of solutions $u_{n}$ such that  $\F(u_{n})\rightarrow \tau_{c}$ and $\|  u_{n} \|_{L^{10}_{t,x}(\R\times\R^{3})}\rightarrow \infty$ as $n\rightarrow\infty$. We will prove the existence of a solution $u_{c}\in H^{1}(\R^{3})$ such that $\F(u_{c})=\tau_{c}$, 
\begin{equation}\label{blowSolution}
\|  u_{c} \|_{L^{10}_{t,x}([0,\infty)\times\R^{3})}=\|  u_{c} \|_{L^{10}_{t,x}((-\infty,0]\times\R^{3})}=\infty,
\end{equation}
and such that
\begin{equation}\label{orbit-is-precompact}
\left\{u_c(t):t\in \R\right\}\qtq{is precompact in }H^{1}(\R^{3}).
\end{equation}

\begin{theorem}[Existence of minimal blow-up solutions]\label{CompacSolution} Suppose Theorem~\ref{MainTheorem} fails.  Then there exists a $u_{c,0}\in H^{1}(\R^{3})$ with $\F(u_{c,0})=\tau_{c}$
such that if $u_{c}$ is the corresponding solution to \eqref{NLS} with data $u_{c}(0)=u_{c,0}$, then \eqref{blowSolution} and \eqref{orbit-is-precompact} hold.
\end{theorem}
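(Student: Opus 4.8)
The strategy is the standard concentration-compactness argument, following \cite{KillipOhPoVi2017, KillipMiaVisanZhangZheng}. Assuming $\tau_c<\infty$ as in \eqref{CriticalLevel}, take solutions $u_n$ with $\F(u_n)\to\tau_c$ and $\|u_n\|_{L^{10}_{t,x}}\to\infty$. First normalize the blow-up: after a time translation we may assume $\|u_n\|_{L^{10}_{t,x}((-\infty,0]\times\R^3)}=\|u_n\|_{L^{10}_{t,x}([0,\infty)\times\R^3)}\to\infty$. By Lemma~\ref{FunctionF}(iv) the data $\{u_n(0)\}$ are bounded in $H^1$, so Theorem~\ref{LinearProfi} yields (along a subsequence) a decomposition $u_n(0)=\sum_{j\le J}\phi^j_n+W^J_n$ with parameters $\lambda^j_n,t^j_n,x^j_n$ satisfying the orthogonality \eqref{Ortho}, the remainder smallness \eqref{Reminder}, the weak-vanishing \eqref{WeakConver}, and the Pythagorean expansions \eqref{MassEx}--\eqref{EnergyEx}. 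Since $\F(\phi^j_n)\le\F(u_n(0))<\infty$ for $n$ large, Lemma~\ref{FunctionF}(i) places the mass-energy of each profile in $\K_a$, and by Corollary~\ref{Compa22} also in $\K_0$, which is exactly what is needed to invoke Proposition~\ref{P:embedding}.

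The next step is to convert the decoupling of mass and energy into a dichotomy. Using Lemma~\ref{FunctionF} -- monotonicity (iii), the equivalence $\F\sim M+E_a$ of (iv) (with uniform constants, since all quantities are $\le\tau_c$), and the continuity statement (v) -- together with \eqref{MassEx}--\eqref{EnergyEx}, one shows, exactly as in \cite{KillipOhPoVi2017}, that either: (a) $J^\ast=1$, the single profile obeys $\lim_n\F(\phi^1_n)=\tau_c$, and $W^1_n\to0$ in $H^1$; or (b) there is $\delta>0$ with $\limsup_n\F(\phi^j_n)\le\tau_c-\delta$ for every $j$. (One uses here that a nonzero profile always carries a positive amount of either mass or $\dot H^1$-type energy -- including the small-scale profiles, whose limiting energy is bounded below away from zero.)

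In case (b) we reach a contradiction. For each profile we build a global scattering solution $v^j_n$ to \eqref{NLS} with $v^j_n(-\lambda^{j,2}_nt^j_n)=\phi^j_n$: when $\lambda^j_n\equiv1$ and $\{x^j_n\}$ is bounded this follows from the definition of $\tau_c$ together with $\F(\phi^j)<\tau_c$; in the three remaining regimes ($\lambda^j_n\to0$ with $x^j_n\equiv0$; $\lambda^j_n\equiv1$ with $|x^j_n|\to\infty$; $\lambda^j_n\to0$ with $|x^j_n/\lambda^j_n|\to\infty$) it is furnished by Proposition~\ref{P:embedding}, approximating respectively by \eqref{CriticalNLS}, \eqref{NLSfree}, and \eqref{3dquintic}. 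One then forms the approximate solution $u^J_n:=\sum_{j\le J}v^j_n+e^{-it\L_a}W^J_n$: the orthogonality \eqref{Ortho} makes the cross terms in the nonlinearity asymptotically negligible, the small-data theory (Proposition~\ref{SDC}) together with $\F\sim M+E_a$ lets one sum the (possibly infinitely many) bubbles so that $\sup_n\|u^J_n\|_{L^{10}_{t,x}}\lesssim1$, and the approximation of $v^j_n$ by compactly supported functions of space-time from Proposition~\ref{P:embedding} controls the remaining error in $N(\R)$. Lemma~\ref{StabilityNLS} then forces $\|u_n\|_{L^{10}_{t,x}(\R\times\R^3)}\lesssim1$, contradicting $\|u_n\|_{L^{10}_{t,x}}\to\infty$.

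Hence (a) holds. A short analysis of the lone profile excludes nontrivial parameters: if $\lambda^1_n\to0$ or $|x^1_n|\to\infty$, then by Proposition~\ref{P:embedding} and Lemma~\ref{StabilityNLS} the $u_n$ would inherit a uniform $L^{10}_{t,x}$ bound, impossible; if $t^1_n\to+\infty$ (resp. $-\infty$), then by \eqref{DecayingES1}--\eqref{DecayingES2} the data $u_n(0)$ are asymptotically a linear evolution from the remote past (resp. future), so $\|u_n\|_{L^{10}_{t,x}([0,\infty)\times\R^3)}$ (resp. on $(-\infty,0]$) stays bounded, again impossible. Therefore $\lambda^1_n\equiv1$, $t^1_n\equiv0$, $x^1_n\to x_\infty\in\R^3$, and $u_n(0)\to u_{c,0}:=\phi^1(\cdot-x_\infty)\in H^1$. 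By continuity of $\F$, $\F(u_{c,0})=\tau_c$; letting $u_c$ be the corresponding solution, if $\|u_c\|_{L^{10}_{t,x}([0,\infty)\times\R^3)}$ were finite then Lemma~\ref{StabilityNLS} would bound $\|u_n\|_{L^{10}_{t,x}([0,\infty)\times\R^3)}$ for $n$ large, contradicting our normalization; the same applies on $(-\infty,0]$, so \eqref{blowSolution} holds. Finally, for \eqref{orbit-is-precompact}, given any times $s_m$ the translates $u_c(\cdot+s_m)$ again form a minimal blow-up sequence (mass and energy are conserved, and by \eqref{blowSolution} applied at each time $u_c$ blows up in both directions from every time), so the argument just given forces $u_c(s_m)$ to converge in $H^1$ along a subsequence; in particular the potential rules out escape to spatial infinity, so no spatial recentering is needed and $\{u_c(t):t\in\R\}$ is precompact in $H^1$. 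The main obstacle is the bookkeeping in case (b) -- assembling the nonlinear profile decomposition, decoupling the interaction terms, and summing over infinitely many bubbles -- though the essential analytic input, namely the construction and compactly-supported approximation of the nonlinear profiles at small scales and far from the origin, has already been isolated in Proposition~\ref{P:embedding}.
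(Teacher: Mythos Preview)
Your proposal is correct and follows essentially the same concentration-compactness route as the paper: reduce to a Palais--Smale condition (the paper's Proposition~\ref{PScondition}), apply the linear profile decomposition, split into the single-profile and multi-profile scenarios, and in the latter build the nonlinear profile decomposition via Proposition~\ref{P:embedding} and Lemma~\ref{StabilityNLS} to force a uniform $L^{10}_{t,x}$ bound. One cosmetic point: in the paper's profile decomposition the spatial parameters are normalized so that either $|x^j_n/\lambda^j_n|\to\infty$ or $x^j_n\equiv 0$, so in case~(a) you get $x^1_n\equiv 0$ outright rather than $x^1_n\to x_\infty$, and $u_{c,0}=\phi^1$ with no shift.
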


Arguing as in \cite[Theorem~9.6]{KillipOhPoVi2017}, to establish Theorem~\ref{CompacSolution}, it will suffice to establish the following Palais--Smale condition. 

\begin{proposition}\label{PScondition}
Let $\left\{u_{n}\right\}_{n\in \N}\subset H^{1}(\R^{3})$ be a sequence of solutions to \eqref{NLS} such that
$\lim_{n\rightarrow\infty}\F(u_{n})=\tau_{c}$, and suppose $t_{n}\in \R$ satisfy
\begin{equation}\label{Blowp2}
\lim_{n\rightarrow\infty}\|  u_{n} \|_{L^{10}_{t,x}([t_{n},\infty)\times\R^{3})}=\|  u_{n} \|_{L^{10}_{t,x}((-\infty,t_{n}]\times\R^{3})}=\infty.
\end{equation}
 Then we have that $\left\{u_{n}\right\}_{n\in \N}$ converges along a subsequence in  $H_{x}^{1}(\R^{3})$.
\end{proposition}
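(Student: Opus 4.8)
The plan is to prove the Palais--Smale condition by the concentration--compactness scheme of \cite[Section~9]{KillipOhPoVi2017}, built on the linear profile decomposition (Theorem~\ref{LinearProfi}), the embedding of nonlinear profiles (Proposition~\ref{P:embedding}), and the stability theory (Lemma~\ref{StabilityNLS}). First I would translate in time so that $t_n\equiv0$; then \eqref{Blowp2} says $\|u_n\|_{L^{10}_{t,x}}=\infty$ on both $[0,\infty)$ and $(-\infty,0]$. Since $\F(u_n)\to\tau_c<\infty$, Lemma~\ref{FunctionF}(iv) gives a uniform $H^{1}_{a}$ bound on $\{u_n(0)\}$, so Theorem~\ref{LinearProfi} applies: after passing to a subsequence, $u_n(0)=\sum_{j=1}^{J}\phi_n^j+W_n^J$ with parameters $(\lambda_n^j,t_n^j,x_n^j)$, asymptotic orthogonality \eqref{Ortho}, the Pythagorean expansions \eqref{MassEx}--\eqref{EnergyEx}, and the remainder bound \eqref{Reminder}. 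The goal is to show that necessarily $J^\ast=1$, $\lambda_n^1\equiv1$, $t_n^1\equiv0$, $x_n^1\equiv0$, and $W_n^1\to0$ in $H^{1}_{a}$, whence $u_n(0)\to\phi^1$.

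Next I would attach to each profile a nonlinear profile $v_n^j$, i.e.\ a global solution of \eqref{NLS} with $v_n^j(0)=\phi_n^j$ and $\|v_n^j\|_{L^{10}_{t,x}}\lesssim1$ uniformly in $n$. When the parameters are nontrivial ($\lambda_n^j\to0$, or $\lambda_n^j\equiv1$ with $|x_n^j|\to\infty$), this is Proposition~\ref{P:embedding}; in the unit-scale regime one additionally needs the limiting mass--energy of $\phi_n^j$ to lie in $\K_0$, which follows from $\K_a\subseteq\K_0$ (Corollary~\ref{Compa22}) together with the fact that mass and energy decouple additively over the decomposition --- here the positivity estimate \eqref{PositiveE} is used to see that every piece $\phi_n^j,W_n^J$ (all of mass $<M(Q_{1,a\wedge0})$) carries asymptotically nonnegative energy. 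When $\lambda_n^j\equiv1$ and $x_n^j\equiv0$ we have $\phi^j\in H^{1}$: if moreover $\liminf_n\F(\phi_n^j)<\tau_c$, then by the definition \eqref{CriticalLevel} of $\tau_c$ the solution of \eqref{NLS} with data $\phi^j$ (if $t_n^j\equiv0$), respectively the solution scattering to $\phi^j$ as $t\to\mp\infty$ (if $t_n^j\to\pm\infty$), is global with finite $L^{10}_{t,x}$ norm, and I take this (suitably time-translated) solution as $v_n^j$. The same additivity and the monotonicity of $\F$ (Lemma~\ref{FunctionF}(iii)) also show that whenever there are at least two profiles, or a nonzero remainder carrying a definite amount of mass--energy, every such $v_n^j$ has \emph{strictly} subcritical mass--energy, so in particular all nonlinear profiles then have finite space--time norm.

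I would then run the usual dichotomy. If we are not in the ``single compact bubble'' case, one first notes that if the only profile has $\lambda_n^1\equiv1$, $x_n^1\equiv0$, $t_n^1\to\pm\infty$, then $\|e^{-it\L_a}u_n(0)\|_{L^{10}_{t,x}}$ is small on the corresponding half-line, so small-data/stability theory forces $\|u_n\|_{L^{10}_{t,x}}$ to be finite there, contradicting \eqref{Blowp2}. In every remaining situation all nonlinear profiles have finite norm, and one forms the approximate solution
\[
u_n^{\mathrm{app}}(t):=\sum_{j=1}^{J}v_n^j(t)+e^{-it\L_a}W_n^J,\qquad u_n^{\mathrm{app}}(0)=u_n(0).
\]
Using \eqref{Ortho} to decouple the profile sum, the uniform bounds on the $v_n^j$, and \eqref{Reminder}, one shows (first sending $J\to J^\ast$, then $n\to\infty$) that $u_n^{\mathrm{app}}$ has $L^{10}_{t,x}$ and $\dot{S}^{1}_{a}$ norms bounded uniformly in $n$, and that the error $(i\partial_t-\L_a)u_n^{\mathrm{app}}+|u_n^{\mathrm{app}}|^2u_n^{\mathrm{app}}-|u_n^{\mathrm{app}}|^4u_n^{\mathrm{app}}$ --- a sum of cross terms, each negligible by \eqref{Ortho}, plus terms controlled by the small $L^{10}_{t,x}$ norm of $e^{-it\L_a}W_n^J$ --- is small (after one derivative) in the norm $N(\R)$ of Lemma~\ref{StabilityNLS}. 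Lemma~\ref{StabilityNLS} then gives $\|u_n\|_{L^{10}_{t,x}(\R\times\R^{3})}\lesssim1$, contradicting \eqref{Blowp2}. Hence we are in the single-bubble case: $u_n(0)=\phi^1+W_n^1$ with $\phi^1\in H^{1}$ fixed. If the solution $v^1$ of \eqref{NLS} with data $\phi^1$ had finite $L^{10}_{t,x}$ norm, then $v^1+e^{-it\L_a}W_n^1$ would again be an admissible approximate solution with data $u_n(0)$, and Lemma~\ref{StabilityNLS} (with \eqref{Reminder} for $J^\ast=1$) would contradict \eqref{Blowp2}; so $v^1$ has infinite $L^{10}_{t,x}$ norm, which by \eqref{CriticalLevel} gives $\F(\phi^1)\ge\tau_c$, hence $\F(\phi^1)=\tau_c$. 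Feeding this into \eqref{MassEx}--\eqref{EnergyEx} and using that $\F$ is strictly increasing in the mass--energy on $\K_a$ (by \eqref{F-functional}) forces $M(W_n^1)\to0$ and $E_a(W_n^1)\to0$, so $\F(W_n^1)\to0$ and $W_n^1\to0$ in $H^{1}_{a}$ by Lemma~\ref{FunctionF}(iv). Therefore $u_n(0)\to\phi^1$ in $H^{1}$, as claimed.

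I expect the main obstacle to be the construction and error analysis of $u_n^{\mathrm{app}}$: one must accommodate in a single decomposition the three genuinely different limiting models of Proposition~\ref{P:embedding} (cubic--quintic NLS without potential for unit-scale bubbles far from the origin, quintic NLS with inverse-square potential for small-scale bubbles at the origin, quintic NLS without potential for small-scale bubbles far from the origin), verify that the interactions between bubbles living at asymptotically orthogonal scales and centers are negligible, and control the tail $\sum_{j>J_0}v_n^j$ of the profile sum. A subsidiary but delicate point is the mass--energy bookkeeping of the second step, which must keep the limiting mass--energy of each nonlinear profile strictly inside the region where the cited scattering results are available --- in particular strictly inside $\K_0$ for the unit-scale, far-from-origin profiles, which is where the inclusion $\K_a\subseteq\K_0$ is essential.
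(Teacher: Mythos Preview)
Your proposal is correct and follows essentially the same concentration--compactness scheme as the paper: time-translate to $t_n\equiv0$, extract profiles via Theorem~\ref{LinearProfi}, attach nonlinear profiles (via Proposition~\ref{P:embedding} or the induction hypothesis), and use the stability Lemma~\ref{StabilityNLS} together with asymptotic decoupling to rule out everything except a single profile with trivial parameters $(\lambda_n^1,x_n^1,t_n^1)\equiv(1,0,0)$ and vanishing remainder. The paper organizes the dichotomy slightly differently---splitting according to whether a single profile captures the full limiting mass \emph{and} energy (its Scenario~1) versus not (Scenario~2)---and in Scenario~1 deduces $W_n^1\to0$ directly from the Pythagorean expansions before ruling out nontrivial parameters, whereas you reach $W_n^1\to0$ at the end via $\F(\phi^1)=\tau_c$ and strict monotonicity of $\F$; both arguments are valid and use the same ingredients (in particular $\K_a\subseteq\K_0$ for the unit-scale far-from-origin profiles, and nonnegativity of each piece's energy via \eqref{PositiveE}). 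One small slip: when $t_n^j\to\pm\infty$ you want $\psi^j$ to scatter to $e^{-it\L_a}\phi^j$ as $t\to\pm\infty$ (same sign, so that $\psi^j(t_n^j)\approx e^{-it_n^j\L_a}\phi^j=\phi_n^j$), not $\mp\infty$. Also, in the error estimate the paper makes essential use of the local smoothing Lemma~\ref{LocalSmoo} to handle the cross term $\|\psi_n^j\,\nabla e^{-it\L_a}W_n^J\|_{L^5_tL^{15/8}_x}$; this is the precise mechanism behind the step you flag as the main obstacle.
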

\begin{proof}
By time-translation invariance, we may assume that $t_{n}\equiv 0$. Using \eqref{Enl} and writing $u_{n,0}=u_n(0)$, we have 
\[
\|u_{n,0}\|^{2}_{H^{1}}  \lesssim \F(u_{n})  \lesssim \tau_{c}.
 \]
Applying Theorem~\ref{LinearProfi}, we may write
\begin{equation}\label{Dpe}
u_{n}(0)=\sum^{J}_{j=1}\phi^{j}_{n}+W^{J}_{n}
\end{equation}
for each $J\leq J^{\ast}$, with the various sequences satisfying \eqref{Reminder}--\eqref{EnergyEx}.  We may further assume that  $M(u_{n})\rightarrow M_{0}$, $E_{a}(u_{n})\rightarrow E_{0,a}$ and therefore $\tau_{c}=\F(M_{0}, E_{0,a})$ (cf. Lemma~\ref{FunctionF}).
By \eqref{MassEx} and \eqref{EnergyEx}, we also have 
\begin{align}\label{Pv11}
&\limsup_{n\rightarrow\infty}\sum^{J}_{j=1}M(\phi_{n}^{j})+M(W^{J}_{n})\leq M_{0},\\\label{Pv22}
&\limsup_{n\rightarrow\infty}\sum^{J}_{j=1}E_{a}(\phi_{n}^{j})+E_{a}(W^{J}_{n})\leq E_{0,a},
\end{align}
for each finite $J\leq J^{\ast}$, with all energies in \eqref{Pv22} nonnegative. Moreover, by \eqref{Enl} and the nontriviality of $\phi_{n}^{j}$ we have that $\liminf_{n\to\infty}E_{a}(\phi_{n}^{j})>0$.  

Our goal is to show that there can be at most nonzero $\phi_n^j$.

\textbf{Scenario 1.}  
\begin{equation}\label{Scena1}
\sup_{j}\limsup_{n\rightarrow\infty}M(\phi_{n}^{j})=M_{0}\quad
\mbox {and} \quad \sup_{j}\limsup_{n\rightarrow\infty}E_{a}(\phi_{n}^{j})=E_{0,a}.
\end{equation}
By \eqref{Pv22}, positivity of energy yields $J^{\ast}=1$.  In this case, we have that $W^{1}_{n}\rightarrow 0$ in $H_{x}^{1}$ as $n\rightarrow\infty$. Indeed, since $M(W^{1}_{n})\geq0$ and  $E_{a}(W^{1}_{n})\geq0$, we get  
$\limsup_{n\rightarrow\infty}E_{a}(W_{n}^{1})=0$ and $\limsup_{n\rightarrow\infty}M(W_{n}^{1})=0$. Thus,  \eqref{Enl} implies that $\limsup_{n\rightarrow\infty}\|W_{n}^{1}\|^{2}_{H_{x}^{1}}=0$.  In particular, we obtain
\begin{equation}\label{Csc}
u_{n}(0)=\phi^{1}_{n}+W_{n}^{1}, \quad\text{with}\quad \lim_{n\rightarrow\infty}\|W_{n}^{1}\|^{2}_{H_{x}^{1}}=0.
\end{equation}

Now suppose that $\frac{|x^{1}_{n}|}{\lambda^{1}_{n}}\to \infty$. Then Proposition~\ref{P:embedding} yields a global solution $v_{n}$ with $v_{n}(0)=\phi^{1}_{n}$ such that
\begin{equation*}
\|v_{n}\|_{L^{10}_{t,x}(\R\times\R^{3})}\lesssim 1.
\end{equation*}
As $W_{n}^{1}=u_{n}(0)-v_{n}(0)$, it follows that $\lim_{n\to\infty}\|u_{n}(0)-v_{n}(0)\|_{H_{x}^{1}}=0$. Thus, Lemma~\ref{StabilityNLS} implies that for $n$ large $u_{n}$ is a global solution with finite scattering norm, contradicting \eqref{Blowp2}. It follows that $x^{1}_{n}\equiv 0$.

Next, suppose that $\lambda^{1}_{n}\to 0$ as $n\to \infty$.  In this case, Proposition~\ref{P:embedding} yields a global solution $v_{n}$
with  $v_{n}(0)=\phi^{1}_{n}$ and $\|v_{n}\|_{L^{10}_{t,x}(\R\times\R^{3})}\lesssim 1$. Then Lemma~\ref{StabilityNLS} implies that $\|u_{n}\|_{L^{10}_{t,x}(\R\times\R^{3})}\lesssim 1$ for $n$ large enough, again contradicting \eqref{Blowp2}. It follows that $\lambda_n^1\equiv 1$.

Finally, suppose that $t^{1}_{n}\to \infty$ as $n\to \infty$.  By Sobolev embedding, Strichartz estimates, monotone convergence, and \eqref{Csc}, we deduce that
\begin{equation}\label{Eq11}
\begin{split}
\| & e^{-it\L_{a}}u_{n}(0)  \|_{L^{10}_{t,x}([0,\infty)\times \R^{3})} \\
&\leq \|e^{-it\L_{a}} \phi^{1}_{n}  \|_{L^{10}_{t,x}([0,\infty)\times \R^{3})}+
\|e^{-it\L_{a}} W_{n}^{1} \|_{L^{10}_{t,x}([0,\infty)\times \R^{3})},\\
&\lesssim
\|e^{-it\L_{a}} \phi^{1} \|_{L^{10}_{t,x}([t^{1}_{n},\infty)\times \R^{3})}+\|W_{n}^{1}\|_{H_{x}^{1}}\rightarrow 0,
\end{split}
\end{equation}
as $n\rightarrow\infty$. Writing $\tilde{u}_{n}=e^{-it\L_{a}}u_{n}(0)$ and $e_{n}=|\tilde{u}_{n}|^{4}\tilde{u}_{n}-|\tilde{u}_{n}|^{2}\tilde{u}_{n}$, we use \eqref{Eq11}, H\"older, and 
Strichartz to obtain
\[
\|\nabla e_{n}  \|_{N(\R)}\to 0\quad \text{as $n\to \infty$}.
\]
Thus Lemma~\ref{StabilityNLS} again leads to a contradiction with \eqref{Blowp2}. An analogous argument handles the case $t^{1}_{n}\rightarrow -\infty$ as $n\rightarrow\infty$.

Thus, in Scenario 1, we obtain that $x^{1}_{n}\equiv 0$, $t^{1}_{n}\equiv 0$ and $\lambda^{1}_{n}\equiv 1$.  This yields the desired conclusion of Proposition~\ref{PScondition}, and hence it remains to show that the only remaining scenario results in a contradiction. 

\textbf{Scenario 2.} If \eqref{Scena1} fails for all $j$, then there exists $\delta>0$ such that
\begin{equation}\label{Scena2}
\sup_{j}\limsup_{n\rightarrow\infty}M(\phi_{n}^{j})\leq M_{0}-\delta\quad
\mbox {or} \quad \sup_{j}\limsup_{n\rightarrow\infty}E_{a}(\phi_{n}^{j})\leq E_{0,a}-\delta.
\end{equation}

We then define nonlinear profiles $\psi_{n}^{j}$ associated to each $\phi_{n}^{j}$ as follows:
 \begin{itemize}
	\item If $\frac{|x^{j}_{n}|}{\lambda^{j}_{n}}\to \infty$ for some $j$, then we are in position to apply Proposition~\ref{P:embedding}, and hence we have a global solution $\psi_{n}^{j}$ of \eqref{NLS} with data $\psi_{n}^{j}(0)=\phi_{n}^{j}$. Indeed,  it is enough to show that $(M(\phi^{j}), E_{0}(\phi^{j}))\in \K_{0}$ when $\lambda_{n}^{j}\equiv 1$, 
	$|x_{n}^{j}| \to \infty$ and $t_{n}^{j}\equiv 0$. To see this, first note that by \eqref{MassEx}, \eqref{EnergyEx} and Lemma~\ref{FunctionF}(iii) we have
	$\F(\phi_{n}^{j})\leq \F(M_{0}, E_{0,a})= \tau_{c}$ for $n$ large. Thus, as $E_{a}(\phi_{n}^{j})\geq 0$ and $M(\phi_{n}^{j})=M(\phi^{j})$ we deduce that there exists 
	$\epsilon=\epsilon(j)>0$ such that 	\[
	\text{dist}\((M(\phi_{n}^{j}),E_{a}(\phi_{n}^{j})), \Omega_{a}\)\geq \epsilon.
	\]
(cf. \eqref{F-functional}.  Notice also that \eqref{Conver33} implies $\lim_{n\to\infty}E_{a}(\phi_{n}^{j})=E_{0}(\phi^{j})$ (recall that $|x_{n}^{j}| \to \infty$ and $t_{n}^{j}\equiv 0$).  Thus, the inequality above yields
\[
\text{dist}\((M(\phi^{j}),E_{0}(\phi^{j})), \Omega_{a}\)\geq \epsilon.
\]
In particular, as $\K_{a}\subseteq \K_{0}$, we deduce that $\text{dist}\((M(\phi^{j}),E_{0}(\phi^{j})), \Omega_{0}\)\geq \epsilon$, and hence
\[
E_{0}(\phi^{j})+\frac{M(\phi^{j})+E_{0}(\phi^{j})}{\mbox{dist}\((M(\phi^{j}),E_{0}(\phi^{j})),\Omega_{0}\)}<\infty.
\]
In view of Lemma~\ref{FunctionF}(i), this implies $(M(\phi^{j}), E_{0}(\phi^{j}))\in \K_{0}$.

	\item If $x^{j}_{n}\equiv 0$ and $\lambda^{j}_{n}\rightarrow 0$, we define $\psi_{n}^{j}$ to be the global solution of \eqref{NLS} with the initial data $\psi_{n}^{j}(0)=\phi_{n}^{j}$ guaranteed by Proposition~\ref{P:embedding}.
	
	\item If $x^{j}_{n}\equiv 0$, $\lambda^{j}_{n}\equiv 1$ and $t^{n}_{j}\equiv 0$, we take  $\psi^{j}$ to be the global solution of 
	\eqref{NLS} with the initial data $\psi^{j}(0)=\phi^{j}$.
	
\item If $x^{j}_{n}\equiv 0$, $\lambda^{j}_{n}\equiv 1$ and $t^{n}_{j}\rightarrow\pm\infty$, we take $\psi^{j}$  to be the global solution  of \eqref{NLS} that scatters to $e^{-it\L_{a}}\phi^{j}$ in $H^{1}_{x}(\R^{3})$  as $t\rightarrow\pm\infty$.  In either case, we define the global solution to \eqref{NLS},
\[\psi^{j}_{n}(t,x):=\psi^{j}(t+t^{j}_{n}, x).\]
\end{itemize}

By construction, we have that for each $j$, 
	\begin{equation}\label{Aproxi11}
\|\psi_{n}^{j}(0)- \phi_{n}^{j}\|_{H^{1}_{a}}\rightarrow0,\quad \text{as $n\rightarrow\infty$}.
\end{equation}
Moreover, notice that by \eqref{Scena2} and Lemma~\ref{FunctionF}(v), we may obtain 
\begin{equation}\label{BoundProfile}
\| \psi^{j}_{n}  \|_{L^{10}_{t,x}}\lesssim_{\delta, \tau_{c}} 1, \quad \text{for $n$ large and $1\leq j\leq J$}.
\end{equation}

In particular, by \eqref{BoundProfile}, \eqref{Enl} and 
Remark \ref{PRegularity}, we have the following: 
\begin{align}\label{ImporBound}
&\| \psi^{j}_{n}  \|_{L^{10}_{t,x}(\R\times\R^{3})}\lesssim_{\delta, \tau_{c}} [E_{a}(\psi^{j}_{n})]^{\frac{1}{2}},
\quad
\|\psi^{j}_{n}\|_{L^{10}_{t}\dot{H}_{a}^{1,\frac{30}{13}}(\R\times\R^{3})}\lesssim_{\delta, \tau_{c}} [E_{a}(\psi^{j}_{n})]^{\frac{1}{2}},\\
\label{ImporBound22}
&\|\psi^{j}_{n}\|_{L^{\frac{5}{2}}_{t}L_{x}^{\frac{30}{7}}(\R\times\R^{3})}\lesssim_{\delta, \tau_{c}} [M(\psi^{j}_{n})]^{\frac{1}{2}}.
\end{align}

We define the approximate solutions
\[
u^{J}_{n}(t):=\sum^{J}_{j=1}\psi^{j}_{n}(t)+e^{-it\L_{a}}W^{J}_{n},
\] 
with the goal of applying Lemma~\ref{StabilityNLS} to contradict \eqref{Blowp2}. In particular, we define the errors $e_n^J$ via 
\[
 (i\partial_{t}-\L_{a}) {u}^{J}_{n}=-|{u}^{J}_{n}|^{2}{u}^{J}_{n}+|{u}^{J}_{n}|^{4}{u}^{J}_{n}+e^{J}_{n}.
  \]

From \eqref{Aproxi11} we see that
\begin{equation}\label{DefiuJ}
\begin{split}
\lim_{n\to \infty}\|u_{n}^{J}(0)- u_{n}(0)\|_{H^{1}_{x}}=0, \quad \text{for any $J$}.
\end{split}
\end{equation}
It will suffice to establish the following estimates:
\begin{align}\label{Bound11}
&\sup_{J}\limsup_{n\rightarrow\infty}\| {u}^{J}_{n}  \|_{L_{t}^{\infty}H^{1}_{x}(\R\times \R^{3})}\lesssim_{\tau_{c},\delta} 1,
\\
\label{Bound22}
&\sup_{J}\limsup_{n\rightarrow\infty}\big[ \| {u}^{J}_{n}  \|_{L^{10}_{t,x}}
+ \|u_{n}^{J}\|_{L^{10}_{t}\dot{H}_{a}^{1,\frac{30}{13}}}+\|u_{n}^{J}\|_{L^{\frac{5}{2}}_{t}L_{x}^{\frac{30}{7}}}\big]
 \lesssim_{\tau_{c},\delta} 1,
\\
\label{Bound33}
&\lim_{J\to J^{\ast}}\limsup_{n\rightarrow\infty}\|\nabla e^{J}_{n}  \|_{N(\R)}=0,
\end{align}
where here and below all space-time norms are taken over $\R\times\R^3$. Indeed, using  \eqref{DefiuJ}, \eqref{Bound11}, \eqref{Bound22}, and \eqref{Bound33}, Lemma~\ref{StabilityNLS} implies that  $\| u_{n}  \|_{L^{10}_{t,x}}\lesssim_{\tau_{c},\epsilon,\delta}1$ for  $n$ large, contradicting \eqref{Blowp2}. 

We therefore turn to establishing the estimates \eqref{Bound11}-\eqref{Bound33}. We will use the following lemma.  

\begin{lemma}[Asymptotic decoupling]\label{AsymptoticDec}
If $j\neq k$ we have
\[
\lim_{n\to \infty}[
\|\psi^{j}_{n} \psi^{k}_{n}\|_{L^{5}_{t, x}}
+\| \psi^{j}_{n} \nabla\psi^{k}_{n} \|_{L^{5}_{t}{L}_{x}^{\frac{15}{8}}}
+\|\nabla \psi^{j}_{n} \nabla\psi^{k}_{n} \|_{L^{5}_{t}{L}_{x}^{\frac{15}{13}}}
+\| \psi^{j}_{n}\psi^{k}_{n} \|_{L^{\frac{5}{4}}_{t}{L}_{x}^{\frac{15}{7}}}
]=0.
\]
\end{lemma}

\begin{proof} As the proof follows essentially as in \cite[Lemma~4.1]{YANG2020124006} or \cite[Lemma~9.2]{KillipOhPoVi2017}, we will provide only a brief sketch here.  For first three terms, which involve energy-critical type spaces, the basic idea is to approximate the solutions by compactly supported functions of space-time (this requires uniform space-time bounds and relies on Proposition~\ref{P:embedding} when necessary), and then to exploit the orthogonality of parameters. For the fourth term, if $\lambda_n^j\equiv1$ and $\lambda_n^k\equiv 1$, then the solutions arise from $H^1$ profiles and we obtain space-time bounds (and hence the approximation result) in Strichartz spaces at $L^2$-regularity. If $\lambda_n^j\to 0$, then the solution arises from a frequency-truncated $\dot H^1$ profile and (again by persistence of regularity) has asymptotically vanishing space-time norms at $L^2$-regularity.  In particular, if one or both of the scales tends to zero, we obtain asymptotic vanishing by H\"older's inequality. \end{proof}

As \eqref{Bound11} readily follows from Strichartz \eqref{DefiuJ}, \eqref{Bound22}, and \eqref{Bound33}, it will suffice to establish \eqref{Bound22} and \eqref{Bound33}. 

\begin{proof}[Proof of \eqref{Bound22}] Let us show the estimate of the $L_{t,x}^{10}$-norm only, as the remaining terms may be handled in a similar fashion.  By \eqref{ImporBound}, equivalence of Sobolev spaces and Strichartz, we have get 
\[\begin{split}
\| u^{J}_{n}  \|^{2}_{L^{10}_{t,x}}&\lesssim \sum^{J}_{j=1}\| \psi^{j}_{n}  \|^{2}_{L^{10}_{x}}+
\sum_{j\neq k}\| \psi^{j}_{n} \psi^{k}_{n}  \|_{L^{5}_{t,x}}+\| e^{-it\L_{a}}W^{J}_{n}  \|^{2}_{L^{10}_{t,x}}\\
&\lesssim_{\tau_{c}}\sum^{J}_{j=1}E_{a}( \psi^{j}_{n} )+\sum^{J}_{j\neq k}o(1)+E_{a}(W^{J}_{n})
\lesssim_{\tau_{c}}1 +o(1)\cdot J^{2}
\end{split}
\]
as $n\to\infty$. 
\end{proof}

\begin{proof}[Proof of \eqref{Bound33}] Since $\psi^{j}_{n}$ is solution of \eqref{NLS} we can write
\begin{align}\label{Fdefi11}
e^{J}_{n}&= \sum^{J}_{j=1}F( \psi^{j}_{n} )-F\big(\sum^{J}_{j=1}\psi^{j}_{n}\big)\\\label{Fdefi22}
         &+F(u^{J}_{n}-e^{-it\L_{a}}W^{J}_{n})-F(u^{J}_{n}),
\end{align}
where $F(z)=F_{1}(z)-F_{2}(z)$ with $F_{1}(z):=|z|^{4}z$ and $F_{2}(z):=|z|^{2}z$. Now, by H\"older's inequality we have
\begin{align}\label{F1E}
\|\nabla\big[\sum^{J}_{j=1}F_{1}(\psi^{j}_{n}) -F_{1}(\sum^{J}_{j=1}\psi^{j}_{n})\big]\|_{L^{2}_{t}L^{\frac{6}{5}}_{x}}
&\lesssim \sum_{j\neq k}\| \psi^{j}_{n}  \|^{3}_{L^{10}_{x}}\|  \psi^{j}_{n} \nabla\psi^{k}_{n}  \|_{L^{5}_{t}L^{\frac{15}{8}}_{x}},\\
\label{F2E}
\|\nabla\big[\sum^{J}_{j=1}F_{2}(\psi^{j}_{n}) -F_{2}(\sum^{J}_{j=1}\psi^{j}_{n})\big]\|_{L^{\frac{5}{3}}_{t}L^{\frac{30}{23}}_{x}}
&\lesssim \sum_{j\neq k}\| \psi^{j}_{n}  \|_{L^{\frac{5}{2}}_{t}L^{\frac{30}{7}}_{x}}\|  \psi^{j}_{n} \nabla\psi^{k}_{n}  \|_{L^{5}_{t}L^{\frac{15}{8}}_{x}}.
\end{align}
Thus, by orthogonality, \eqref{ImporBound}, \eqref{ImporBound22}, \eqref{F1E} and  \eqref{F2E}  we get
\begin{equation}\label{FirstF}
\lim_{J\to J^{\ast}}\limsup_{n\rightarrow\infty}\| \nabla \eqref{Fdefi11}\|_{N(\R)}=0.
\end{equation}

We next estimate \eqref{Fdefi22}. First, by interpolation we get
\begin{align*}
&\|\nabla [F_{1}(u^{J}_{n}-e^{-it\L_{a}}W^{J}_{n})-F_{1}(u^{J}_{n})]\|_{L^{2}_{t}L^{\frac{6}{5}}_{x}}\\
&\lesssim 
	\|  e^{-it\L_{a}}W^{J}_{n} \|^{4}_{L^{10}_{t, x}}
\| \nabla  e^{-it\L_{a}}W^{J}_{n}  \|_{L^{10}_{t}{L}_{x}^{\frac{30}{13}}}
+	\|  e^{-it\L_{a}}W^{J}_{n} \|^{4}_{L^{10}_{t, x}}
\| \nabla u^{J}_{n}  \|_{L^{10}_{t}{L}_{x}^{\frac{30}{13}}}\\
&+	\|  e^{-it\L_{a}}W^{J}_{n} \|_{L^{10}_{t, x}}\| u^{J}_{n} \|^{3}_{L^{10}_{t, x}}
\| \nabla u^{J}_{n}  \|_{L^{10}_{t}{L}_{x}^{\frac{30}{13}}}+
\| u^{J}_{n} \|^{3}_{L^{10}_{t, x}}\|u^{J}_{n} \nabla  e^{-it\L_{a}}W^{J}_{n}  \|_{L^{5}_{t}{L}_{x}^{\frac{15}{8}}}
\end{align*}
Combining \eqref{Reminder}, \eqref{Pv22} and \eqref{ImporBound} we see that
\[\begin{split}
\lim_{J\to J^{\ast}}\limsup_{n\rightarrow\infty}\|\nabla [F_{1}(u^{J}_{n}-e^{-it\L_{a}}W^{J}_{n})-F_{1}(u^{J}_{n})]\|_{N^{1}(\R)}
\\
\lesssim 
\lim_{J\to J^{\ast}}\limsup_{n\rightarrow\infty}\|u^{J}_{n} \nabla  e^{-it\L_{a}}W^{J}_{n}  \|_{L^{5}_{t}{L}_{x}^{\frac{15}{8}}}.
\end{split}
\]
Similarly, 
\begin{align*}
&\|\nabla [F_{2}(u^{J}_{n}-e^{-it\L_{a}}W^{J}_{n})-F_{2}(u^{J}_{n})]\|_{L^{\frac{5}{3}}_{t}L^{\frac{30}{23}}_{x}}\\
&\lesssim 
\|  e^{-it\L_{a}}W^{J}_{n} \|_{L^{10}_{t, x}}
\| e^{-it\L_{a}}W^{J}_{n}  \|_{L^{\frac{5}{2}}_{t}{L}_{x}^{\frac{30}{7}}}
\| \nabla  e^{-it\L_{a}}W^{J}_{n}  \|_{L^{10}_{t}{L}_{x}^{\frac{30}{13}}}
\\
&\quad +	
\|  e^{-it\L_{a}}W^{J}_{n} \|_{L^{10}_{t, x}}
\| e^{-it\L_{a}}W^{J}_{n}  \|_{L^{\frac{5}{2}}_{t}{L}_{x}^{\frac{30}{7}}}
\| \nabla  u^{J}_{n}  \|_{L^{10}_{t}{L}_{x}^{\frac{30}{13}}}
\\
&\quad+
\|  e^{-it\L_{a}}W^{J}_{n} \|_{L^{10}_{t, x}}
\| u^{J}_{n}  \|_{L^{\frac{5}{2}}_{t}{L}_{x}^{\frac{30}{7}}}
\| \nabla  u^{J}_{n}  \|_{L^{10}_{t}{L}_{x}^{\frac{30}{13}}}
\\
& \quad +\| u^{J}_{n}  \|_{L^{\frac{5}{2}}_{t}{L}_{x}^{\frac{30}{7}}}
\|u^{J}_{n} \nabla  e^{-it\L_{a}}W^{J}_{n}  \|_{L^{5}_{t}{L}_{x}^{\frac{15}{8}}}.
\end{align*}
As Strichartz together with \eqref{Reminder}, \eqref{Pv22} and \eqref{ImporBound} implies
\[\begin{split}
\lim_{J\to J^{\ast}}\limsup_{n\rightarrow\infty}\|\nabla [F_{2}(u^{J}_{n}-e^{-it\L_{a}}W^{J}_{n})-F_{2}(u^{J}_{n})]\|_{N^{1}(\R)}
\\
\lesssim 
\lim_{J\to J^{\ast}}\limsup_{n\rightarrow\infty}\|u^{J}_{n} \nabla  e^{-it\L_{a}}W^{J}_{n}  \|_{L^{5}_{t}{L}_{x}^{\frac{15}{8}}},
\end{split}
\]
it remains to show
\begin{equation}\label{FinalLimit}
\lim_{J\to J^{\ast}}\limsup_{n\rightarrow\infty}\|u^{J}_{n} \nabla  e^{-it\L_{a}}W^{J}_{n}  \|_{L^{5}_{t}{L}_{x}^{\frac{15}{8}}}
=0.
\end{equation}

Applying H\"older we deduce
\[\begin{split}
\|u^{J}_{n} \nabla  e^{-it\L_{a}}W^{J}_{n}  \|_{L^{5}_{t}{L}_{x}^{\frac{15}{8}}}\leq
\|\big(\sum^{J}_{j=1}\psi^{j}_{n}\big) \nabla  e^{-it\L_{a}}W^{J}_{n}  \|_{L^{5}_{t}{L}_{x}^{\frac{15}{8}}}\\
+\|  e^{-it\L_{a}}W^{J}_{n} \|_{L^{10}_{t, x}}
\| \nabla e^{-it\L_{a}}W^{J}_{n} \|_{L^{10}_{t}{L}_{x}^{\frac{30}{13}}}.
\end{split}\]
Thus, using   Strichartz inequality, \eqref{Reminder} and \eqref{Pv22} we see that
\[
\lim_{J\to J^{\ast}}\limsup_{n\rightarrow\infty}
\|u^{J}_{n} \nabla  e^{-it\L_{a}}W^{J}_{n}  \|_{L^{5}_{t}{L}_{x}^{\frac{15}{8}}}
\leq 
\lim_{J\to J^{\ast}}\limsup_{n\rightarrow\infty}
\|\big(\sum^{J}_{j=1}\psi^{j}_{n}\big) \nabla  e^{-it\L_{a}}W^{J}_{n}  \|_{L^{5}_{t}{L}_{x}^{\frac{15}{8}}}.
\]
On the other hand, it follows from \eqref{ImporBound} that 
\[\begin{split}
\|\sum^{J}_{j=J'}\psi^{j}_{n}  \|^{2}_{L^{10}_{t,x}}&\lesssim \sum^{J}_{j=J'}\| \psi^{j}_{n}  \|^{2}_{L^{10}_{x}}+
\sum_{j\neq k}\| \psi^{j}_{n} \psi^{k}_{n}  \|_{L^{5}_{t,x}}
\lesssim_{\tau_{c}}\sum^{J}_{j=J'}E_{a}( \psi^{j}_{n} )+\sum^{J}_{j\neq k}o(1)
\end{split}
\]
as $n\to \infty$. Thus, applying \eqref{Pv22}, H\"older and Strichartz we infer that there 
exists $J'=J(\eta)$ such that
\[\begin{split}
\limsup_{n\rightarrow\infty}
\|\big(\sum^{J}_{j=J'}\psi^{j}_{n}\big) \nabla  e^{-it\L_{a}}W^{J}_{n}  \|_{L^{5}_{t}{L}_{x}^{\frac{15}{8}}}
&\lesssim 
\limsup_{n\rightarrow\infty}[\|\sum^{J}_{j=J'}\psi^{j}_{n}  \|^{2}_{L^{10}_{t,x}}
\| \nabla e^{-it\L_{a}}W^{J}_{n} \|_{L^{10}_{t}{L}_{x}^{\frac{30}{13}}}]\\
&\lesssim \eta \quad \text{uniformly in $J\geq J'$}
\end{split}\]
for any $\eta>0$. In particular, to establish \eqref{FinalLimit}  it suffices to show
\begin{equation}\label{Newfinallimit}
\limsup_{n\rightarrow\infty}
\|\psi^{j}_{n}\nabla  e^{-it\L_{a}}W^{J}_{n}  \|_{L^{5}_{t}{L}_{x}^{\frac{15}{8}}}=0
\quad \text{for all $1\leq j \leq J'$.}
\end{equation}
To this end, we observe that for any $\delta>0$ there exists $\varphi^{j}_{\delta}\in C^{\infty}_{\delta}$
with support in $[-T,T]\times\left\{|x|\leq R\right\}$ such that (see \eqref{BewAprox11})
\[
\|  \psi^{j}_{n}-(\lambda^{j}_{n})^{-\frac12} \varphi^{j}_{\delta}\big( \tfrac{t}{(\lambda_n^j)^2}+t^{j}_{n},
\tfrac{x-x^{j}_{n}}{\lambda^{j}_{n}}\big)   \|_{L^{10}_{t}{\dot{H}}_{x}^{1,\frac{30}{13}}}\leq \delta.
\]
Writing 
\[
\tilde{W}^{J}_{n}(t,x):=(\lambda^{j}_{n})^{\frac{1}{2}}[e^{-it\L_{a}}W^{J}_{n}]( (\lambda^{j}_{n})^{2}(t-t^{j}_{n}),
 \lambda^{j}_{n}x+ x^{j}_{n})
 \]
 and applying Lemma~\ref{LocalSmoo} (local smoothing), equivalence of Sobolev norms, and H\"older's inequality, we finally obtain
\begin{align*}
\|\psi^{j}_{n}&\nabla  e^{-it\L_{a}}W^{J}_{n}  \|_{L^{5}_{t}{L}_{x}^{\frac{15}{8}}} \\
	&\lesssim \delta
	\| \nabla e^{-it\L_{a}}W^{J}_{n} \|_{L^{10}_{t}{L}_{x}^{\frac{30}{13}}}
	+	\|\varphi^{j}_{\delta}\|_{L^{\infty}_{x}}
	\| \nabla \tilde{W}^{J}_{n} \|_{L^{5}_{t}{L}_{x}^{\frac{15}{8}}([-T,T]\times\left\{|x|\leq R\right\})}\\
	&\lesssim\delta+C(\delta,T,R)[ \|  e^{-it\L_{a}}W^{J}_{n} \|^{\frac{1}{32}}_{L^{10}_{t, x}}
	\| W^{J}_{n}  \|^{\frac{31}{32}}_{\dot{H}^{1}_{x}}
	+	\|  e^{-it\L_{a}}W^{J}_{n} \|^{\frac{1}{28}}_{L^{10}_{t, x}}
	\| W^{J}_{n}  \|^{\frac{27}{28}}_{\dot{H}^{1}_{x}}].
	\end{align*}
Thus \eqref{Newfinallimit} finally follows from \eqref{Reminder}, which completes the proof of  \eqref{Bound33}.
\end{proof}

As described above, with \eqref{Bound22} and \eqref{Bound33}, we complete the preclusion of Scenario~2 and hence  the proof of Proposition \ref{PScondition}.
\end{proof}

\section{Preclusion of compact solutions}\label{Su8}

In this section, we use the localized virial argument to preclude the possibility of a solution $u_c$ as in Theorem~\ref{CompacSolution}, thus completing the proof of Theorem~\ref{MainTheorem}. 

We begin with the following result.
\begin{proposition}\label{BuIo} Suppose $u_{c}$ is a solution as in Theorem \ref{CompacSolution}. Then for every $\epsilon>0$ there exists $R=R(\epsilon)>1$ such that
\begin{equation}\label{Uniform}
\sup_{t\in \R}\int_{|x|>R}|\nabla u_{c}(t,x)|^{2}+| u_{c}(t,x)|^{2}+| u_{c}(t,x)|^{4}+| u_{c}(t,x)|^{6}dx\leq \epsilon.
\end{equation}
Moreover, there exists $\eta>0$ such that
\begin{equation}\label{BoundVirial}
\begin{split}
V_{a}(u_{c}(t))\geq\eta, \quad \mbox{for all $t\in \R$},
\end{split}
\end{equation}
where $V_a$ is the virial functional defined in \eqref{E:VFnl}.
\end{proposition}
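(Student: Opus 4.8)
The plan is to establish the two assertions separately, both as consequences of the precompactness of the orbit $\{u_c(t):t\in\R\}$ in $H^1(\R^3)$ provided by Theorem~\ref{CompacSolution}.

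\emph{Proof of \eqref{Uniform}.} Fix $\epsilon>0$. By total boundedness of a precompact set, there exist finitely many times $t_1,\dots,t_m$ such that every $u_c(t)$ lies within $\epsilon$ in $H^1$ of some $u_c(t_i)$. For each fixed $i$, dominated convergence yields $R_i>1$ with $\int_{|x|>R_i}|\nabla u_c(t_i)|^2+|u_c(t_i)|^2\,dx$ arbitrarily small; taking $R_0:=\max_i R_i$ and using the triangle inequality gives $\sup_{t}\int_{|x|>R_0}|\nabla u_c(t)|^2+|u_c(t)|^2\,dx\lesssim\epsilon$. To pass to the $L^4$ and $L^6$ tails, I would introduce a smooth cutoff $\chi$ equal to $1$ on $\{|x|>R\}$, supported in $\{|x|>R/2\}$, with $|\nabla\chi|\lesssim R^{-1}$, and apply the $3d$ Gagliardo--Nirenberg inequalities $\|\chi u_c(t)\|_{L^4}^4\lesssim\|\chi u_c(t)\|_{L^2}\|\nabla(\chi u_c(t))\|_{L^2}^3$ and $\|\chi u_c(t)\|_{L^6}^6\lesssim\|\nabla(\chi u_c(t))\|_{L^2}^6$. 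Since $\|\nabla(\chi u_c(t))\|_{L^2}\lesssim\|\nabla u_c(t)\|_{L^2(|x|>R/2)}+R^{-1}\|u_c(t)\|_{L^2(|x|>R/2)}$ and $\|u_c(t)\|_{L^2}\le M(u_c)^{1/2}$ uniformly, choosing $R$ large (so that $R/2\ge R_0'$ with $R_0'$ as above for a smaller $\epsilon$, and $R^{-1}$ small) reduces everything to the $H^1$-tail bound already established.

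\emph{Proof of \eqref{BoundVirial}.} Since $\F$ is conserved along the flow, $\F(u_c(t))\equiv\tau_c\in(0,\infty)$, so Lemma~\ref{FunctionF}(ii) already gives $V_a(u_c(t))>0$ for every $t$; it remains to rule out $\inf_{t}V_a(u_c(t))=0$. Arguing by contradiction, suppose there exist $t_n$ with $V_a(u_c(t_n))\to0$. By precompactness of the orbit, after passing to a subsequence $u_c(t_n)\to\phi$ strongly in $H^1$. The functionals $M$, $E_a$, $V_a$, and $\F$ are continuous on $H^1(\R^3)$: the potential term in $\|\cdot\|_{\dot H_a^1}^2$ is handled by the equivalence \eqref{equiNorms} (valid for $a>-\tfrac14+\tfrac1{25}$), and the $L^4$, $L^6$ terms by Sobolev embedding. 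Hence $V_a(\phi)=\lim_n V_a(u_c(t_n))=0$ while $\F(\phi)=\lim_n\F(u_c(t_n))=\tau_c\in(0,\infty)$. But Lemma~\ref{FunctionF}(i) then forces $(M(\phi),E_a(\phi))\in\K_a$, and Lemma~\ref{FunctionF}(ii) forces $V_a(\phi)>0$, a contradiction. Therefore $\eta:=\inf_{t\in\R}V_a(u_c(t))>0$.

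I do not anticipate a genuine obstacle here: \eqref{Uniform} is routine once precompactness is in hand, and \eqref{BoundVirial} is a standard compactness-plus-conservation argument. The only points requiring (minor) care are the passage from the $H^1$-tail bound to the $L^4$ and $L^6$ tails (via the cutoff and Gagliardo--Nirenberg), and the continuity of $V_a$ and $\F$ on $H^1$, which relies on the equivalence of the homogeneous Sobolev norms adapted to $\L_a$ together with Sobolev embedding; all relevant quantities stay uniformly bounded on the orbit, so these steps go through without difficulty.
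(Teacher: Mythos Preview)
Your proposal is correct and follows essentially the same approach as the paper: compactness plus Gagliardo--Nirenberg/Sobolev for \eqref{Uniform}, and a contradiction argument using precompactness together with continuity of $V_a$ and $\F$ for \eqref{BoundVirial}. The paper's proof is simply a terser version of what you wrote, invoking Lemma~\ref{FunctionF}(ii) in exactly the same way.
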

\begin{proof} The bound \eqref{Uniform} follows immediately from compactness, Gagliardo--Nirenberg, and Sobolev embedding. Next, suppose \eqref{BoundVirial} fails.  Then there exist $t_n\in\R$ such that $\lim_{n\rightarrow\infty}V_{a}(u_{c}(t_{n}))=0$.  By compactness, there then exists $u^*\in H^1$ so that that $u(t_{n})\rightarrow u^{\ast}$ strongly in $H_{x}^{1}(\R^{3})$ along some subsequence. By continuity of $\F$ and $V_{a}$, we deduce that
\[
V_{a}(u^{\ast})=0 \quad \mbox{and} \quad \F(u^{\ast})=\F(u_{c}(t_{n}))=\tau_{c}<\infty, 
\]
contradicting Lemma \ref{FunctionF}(ii). \end{proof}

\begin{proof}[{Proof of Theorem~\ref{MainTheorem}}]  We suppose Theorem~\ref{MainTheorem} fails and take a solution $u_c$ as in Theorem~\ref{CompacSolution}.  We now use the virial identity: writing
\[
I(t)=\int_{\R^{3}}\phi(x)|u_{c}(t,x)|^{2}\,dx
\]
for a radial function $\phi$ to be specified below, we use \eqref{NLS} to compute 
\begin{align*}
	\partial_{t}I(t)&=2\IM \int_{\R^{3}}\nabla\phi\cdot\nabla u_{c} \bar{u_{c}}\,dx, \\
  \partial_{tt}(t)& =\int_{\R^{3}}\bigl[4 \RE\nabla \bar{u_{c}}\cdot\nabla^{2}\phi\,\nabla u_{c} \\
&\quad	+4|u_{c}|^{2}\tfrac{ax}{|x|^{4}}\cdot \nabla\phi-
	\Delta\phi|u_{c}|^{4}+\tfrac{4}{3}\Delta\phi|u_{c}|^{6} -\Delta^{2}\phi\,|u_{c}|^{2}\bigr]\,dx.
\end{align*}
As $\phi$ is radial, we may rewrite this as 
\begin{align*}
  \partial_{tt}I(t)&=4\int_{\R^{3}}\tfrac{\phi'}{r}|\nabla u_{c}|^{2}dx+4\int_{\R^{3}}\bigl(\tfrac{\phi''}{r^{2}}-\tfrac{\phi'}{r^{3}}\bigr)|x\cdot\nabla u_{c}|^{2}dx\\
	&\quad +\int_{\R^{3}}\bigl(\phi''(r)+\tfrac{2}{r}\phi'(r)\bigr)(\tfrac{4}{3}|u_{c}|^{6}-|u_{c}|^{4})dx\\
	&\quad -\int_{\R^{3}}\Delta^{2}\,\phi|u_{c}|^{2}dx+4a\int_{\R^{3}}\tfrac{\phi'}{r}\tfrac{|u_{c}|^{2}}{|x|^{2}}dx.
\end{align*}
We now specialize to the choice $\phi(x)=R^{2}\psi(\frac{|x|}{R})$, where $\psi$ satisfies
\[
\psi(r)=
\begin{cases} 
r^{2}, \quad 0\leq r\leq R;\\
0, \quad r\geq 2R,
\end{cases}  
\quad
0\leq\psi\leq r^{2}, \quad \psi''\leq 2,\quad \psi^{(4)}\leq\tfrac{4}{R^{2}}.
\]
For this choice of $\phi$, the identity above yields  
\begin{equation}\label{Vzero11}
\begin{split}
\partial_{tt}I(t)\geq 8V_{a}(u_{c}(t))-\mathcal{O}\(\int_{|x|\geq R}\bigl[|\nabla u_{c}|^{2}+|u_{c}|^{2} +|u_{c}|^{4}+|u_{c}|^{6}\bigr](t,x)dx \).
\end{split}
\end{equation}

Now, applying \eqref{Uniform} with $R$ sufficiently large and using \eqref{BoundVirial}, we deduce
\begin{equation}\label{ContraV}
\partial_{tt}I(t)\geq \eta>0 \quad \text{uniformly for $t\in [0, \infty)$}.
\end{equation}
On the other hand,
\[
\left|\partial_{t}I(t)\right|\lesssim R\|u_{c}\|^{2}_{L^{\infty}_{t}H^{1}_{x}}\lesssim_{\tau_{c}} R.
\]
Thus, the Fundamental Theorem of Calculus implies 
\[
\eta T\lesssim\left|\int^{T}_{0}\partial_{tt}I(t)dt\right|\lesssim_{\tau_{c}} R\qtq{for any} T>0,
\]
which yields a contradiction for $T$ sufficiently large.\end{proof}

\end{document}